 \theoremstyle{plain}
\newtheorem{thm}{Theorem}[section]
\newtheorem{lemma}[thm]{Lemma}
\newtheorem{proposition}[thm]{Proposition}
\newtheorem{cor}[thm]{Corollary}
\theoremstyle{definition}
\newtheorem*{defn}{Definition}
\newtheorem{example}{Example}
\numberwithin{equation}{section}
\newcommand{\End}{\mathrm{End}}
\newcommand{\Hom}{\mathrm{Hom}}
\newcommand{\Res}{\mathrm{Res}}
\newcommand{\SInd}{\mathrm{SInd}}
\newcommand{\Ind}{\mathrm{Ind}}
\newcommand{\nst}{\mathrm{nst}}
\newcommand{\Nst}{\mathrm{Nst}}
\newcommand{\crs}{\mathrm{crs}}
\newcommand{\Crs}{\mathrm{Crs}}
\newcommand{\xfrown}{{}_{\times}\hspace{-1mm}\frown}
\newcommand{\sh}{\mathrm{sh}}
\newcommand{\frownx}{\frown_{\hspace{-1mm}\times}}
\DeclareMathOperator{\Cspan}{\mathbb{C}-span}
\newcommand{\tr}{\mathrm{tr}}
\def\Arc{\begin{tikzpicture}
 \draw[line width=.15mm, color=white] (.15,0) to[in=60, out=120, circle, looseness=1.8] (-.45,0);
 \draw[fill=black] (-.45,0) circle (.045);
\draw[fill=black] (-.15,0) circle (.045);
\draw[fill=black] (.15,0) circle (.045);
\end{tikzpicture}}
 \def\arc{\begin{tikzpicture}
  \draw[line width=.15mm, color=white] (-.15,0) to[in=60, out=120, circle, looseness=1.8] (-.45,0);
 \draw[fill=black] (-.45,0) circle (.045);
\draw[fill=black] (-.15,0) circle (.045);
\end{tikzpicture}}
\def\aRc{\begin{tikzpicture}
 \draw[line width=.15mm, color=white] (.15,0) to[in=60, out=120, circle, looseness=1.8] (-.15,0);
\draw[fill=black] (-.15,0) circle (.045);
\draw[fill=black] (.15,0) circle (.045);
\draw[line width=.15mm] (.15,0) to[in=60, out=120, circle, looseness=1.8] (-.15,0);
\end{tikzpicture}}
\def\ARc{\begin{tikzpicture}
 \draw[line width=.15mm, color=white] (.15,0) to[in=60, out=120, circle, looseness=1.8] (-.45,0);
 \draw[fill=black] (-.15,0) circle (.045);
\draw[fill=black] (.15,0) circle (.045);
\draw[fill=black] (-.45,0) circle (.045);
\draw[line width=.15mm] (-.15,0) to[in=60, out=120, circle, looseness=1.8] (-.45,0);
\draw[line width=.15mm] (.15,0) to[in=60, out=120, circle, looseness=1.8] (-.15,0);
\end{tikzpicture}}
\def\ArC{\begin{tikzpicture}
 \draw[line width=.15mm, color=white] (.15,0) to[in=60, out=120, circle, looseness=1.8] (-.45,0);
\draw[fill=black] (-.45,0) circle (.045);
\draw[fill=black] (-.15,0) circle (.045);
\draw[fill=black] (.15,0) circle (.045);
\draw[line width=.15mm] (.15,0) to[in=60, out=120, circle, looseness=1.8] (-.15,0);
\end{tikzpicture}}
\def\aRC{\begin{tikzpicture}
 \draw[line width=.15mm, color=white] (.15,0) to[in=60, out=120, circle, looseness=1.8] (-.45,0);
 \draw[fill=black] (-.45,0) circle (.045);
\draw[fill=black] (-.15,0) circle (.045);
\draw[fill=black] (.15,0) circle (.045);
\draw[line width=.15mm] (.15,0) to[in=55, out=125, circle, looseness=1.5] (-.45,0);
\end{tikzpicture}}
\def\ARC{\begin{tikzpicture}
 \draw[line width=.15mm, color=white] (.15,0) to[in=60, out=120, circle, looseness=1.8] (-.45,0);
 \draw[fill=black] (-.45,0) circle (.045);
\draw[fill=black] (-.15,0) circle (.045);
\draw[fill=black] (.15,0) circle (.045);
\draw[line width=.15mm] (-.15,0) to[in=60, out=120, circle, looseness=1.8] (-.45,0);
\end{tikzpicture}}
\DeclareMathOperator{\wt}{wt}
\begin{document}

\title{Shell Tableaux:\\ A set partition analog of vacillating tableaux}
\author{Megan Ly\\ 
 }

\date{}

\maketitle

\begin{abstract}
Schur--Weyl duality is a fundamental framework in combinatorial representation theory.  It intimately relates the irreducible representations of a group to the irreducible representations of its centralizer algebra.  We investigate the analog of Schur--Weyl duality for the group of unipotent upper triangular matrices over a finite field.  In this case, the character theory of these upper triangular matrices is ``wild'' or unattainable.  Thus we employ a generalization, known as supercharacter theory, that creates a striking variation on the character theory of the symmetric group with combinatorics built from set partitions.  In this paper, we present a combinatorial formula for calculating a restriction and induction of supercharacters based on statistics of set partitions and seashell inspired diagrams.  We use these formulas to create a graph that encodes the decomposition of a tensor space, and develop an analog of Young tableaux, known as shell tableaux, to index paths in this graph. 
 \end{abstract}

\section{Introduction}


Schur--Weyl duality forms an archetypal situation in combinatorial representation theory involving two actions that complement each other.  In the basic setup, a $G$-module $M$ of a finite group $G$ is tensored together $k$ times to form the tensor space
 \[M^{\otimes k} = \underbrace{M\otimes\cdots\otimes M}_{k{\rm \  factors}}.\]
 The commuting actions of $G$ and its centralizer algebra $Z_k = \End_G(M^{\otimes k})$ on $M^{\otimes k}$ produce a decomposition
\[ M^{\otimes k} \cong \bigoplus_{\lambda} G^\lambda \otimes Z^\lambda_k \quad \text{as a} \ (G, Z_k)\text{-bimodule} \]
where the $G^\lambda$ are irreducible $G$-modules and the $Z^\lambda_k$ are irreducible $Z_k$-modules.  This bimodule decomposition intimately relates the irreducible representations of $G$ with the irreducible representations of $Z_k$.

In the classical situation, the general linear group $GL_n(\mathbb{C})$ of $n \times n$ matrices over the field $\mathbb{C}$ of complex numbers acts on the tensor space $V^{\otimes k}$ of an $n$ dimensional vector space $V$, and its centralizer algebra is the symmetric group $S_k$ on the $k$ tensor factors.  More recently, the study of new versions of Schur--Weyl duality has led to many remarkable discoveries about algebras of operators on tensor space that are full centralizers of each other.  For example,
\begin{enumerate}
\item the Brauer algebra is the centralizer of the symplectic and orthogonal groups acting on the tensor space $({\mathbb{C}^n})^{\otimes k}$ \cite{Brauer};

\item the Temperley-Lieb algebra is the centralizer of the special linear Lie group of degree two acting on the tensor space $(\mathbb{C}^2)^{\otimes k}$ \cite{temperley};

\item the partition algebra is the centralizer of the symmetric group acting on the tensor space $V^{\otimes k}$ of its permutation representation $V$ \cite{Partition}.
\end{enumerate}

\noindent This paper focuses on a unipotent analog of Schur--Weyl duality.  

For a positive integer $n$ and a power of a prime $q= p^r$, consider the finite group of unipotent $n \times n$ upper-triangular matrices
\[ U_n  = \left\{\begin{bmatrix}
1 & * & \cdots & * \\
0 & 1 &  & \vdots\\
\vdots & & \ddots & *\\
0 & \cdots & 0 & 1
\end{bmatrix} \right\}\]  
with ones on the diagonal and entries $\ast$ in the finite field $\mathbb{F}_q$ with $q$ elements.  Since $U_n$ is a Sylow $p$-subgroup of $GL_n(\mathbb{F}_q)$, then every $p$-group of $GL_n(\mathbb{F}_q)$ is conjugate to a subgroup of $U_n$.  Embedding every finite $p$-group in $S_n \subseteq GL_n(\mathbb{F}_q)$ as permutation matrices, it follows that every $p$-group is isomorphic to a subgroup of $U_n$.  This is akin to how every finite group is isomorphic to a subgroup of $S_n$, so it is not unreasonable to hope that the representation theories of $U_n$ and $S_n$ have comparable structures.

Unlike the combinatorially rich representation theory of $S_n$ \cite{Mac}, the representation theory of $U_n$ is well-known to be intractable or ``wild'' \cite{wild}. Nevertheless, Andr\'e \cite{Andre1, Andre2, Andre3, Andre4} and Yan \cite{Yan} constructed a workable approximation that has been useful in studying Fourier analysis \cite{supercharacter}, random walks \cite{ADS}, and Hopf algebras \cite{Hopf}.  In \cite{supercharacter} Diaconis and Isaacs generalize this idea to arbitrary finite groups to develop the notion of supercharacter theory.  Supercharacter theory approximates the character theory of a finite group by replacing conjugacy classes with certain unions of conjugacy classes called ``superclasses'' and irreducible characters with certain linear combinations of irreducible characters called ``supercharacters''.

We study a coarsening of Andre and Yan's traditional super-representation theory on $U_n$ \cite{rainbow} where there is a one-to-one correspondence between
 \[\left\{ \begin{array}{c} \text{supercharacters}\\ \text{ of $U_n$}\end{array}\right\} \longleftrightarrow \left\{ \begin{array}{c} \text{Set partitions of}\\  \, \{1,2, \ldots, n\} \end{array}\right\}. \] 
It is becoming ever more apparent that the set partition combinatorics of this super-representation theory is analogous to the classical partition combinatorics of the representation theory of the symmetric group, but with some important differences.

We first study the decomposition of $V^{\otimes k}$ where $V= \mathbb{C}U_n \otimes_{\mathbb{C}U_{n-1}} \mathbbm{1}$ as a $U_n$-supermodule.  Much like the partition algebra, we have
\[V^{\otimes k} \cong \underbrace{(\Ind_{U_{n-1}}^{U_n} \Res_{U_{n-1}}^{U_n})}_{k{\rm \  times}}\hspace{-1mm}{}^k(\mathbbm{1}) \]
where the trivial supercharacter is restricted and induced $k$ times.  We provide a combinatorial formula calculating a restriction of supercharacters from $U_n$ to $U_{n-1}$ where the coefficients of the supercharacters of $U_{n-1}$ are a product of powers of $q$ and $q-1$ based on statistics of set partitions and seashell inspired diagrams.  For example, a shell formed by two set partitions is shown below
\[\begin{tikzpicture}
 \draw[fill=black] (-1,0) circle (.07);
 \draw[fill=black] (-.5,0) circle (.07);
\draw[fill=black] (.0,0) circle (.07);
\draw[fill=black] (.5,0) circle (.07);
\draw[fill=black] (1,0) circle (.07);
\draw[fill=black] (1.5,0) circle (.07);
\draw[line width=.2mm] (0,0) to[in=-110, out=-70, circle, looseness=1.6] (.5,0);
\draw[line width=.2mm] (0,0) to[in=110, out=70, circle, looseness=1.4] (1,0);
\draw[line width=.2mm] (-.5,0) to[in=110, out=70, circle, looseness=1.35] (1.5,0);
\draw[line width=.2mm] (-.5,0) to[in=-110, out=-70, circle, looseness=1.4] (1,0);
\node at (1.9,0) {.};
\end{tikzpicture}\]
Using Frobenius reciprocity, we obtain a corresponding formula for inducing supercharacters.  Together these formulas are known as branching rules.  As opposed to the representation theory of the symmetric group, they depend on the embedding of $U_{n-1}$ in $U_n$.

We then use the branching rules to create a graph that encodes the decomposition of $V^{\otimes k}$ known as the Bratteli diagram.  Since we are approximating by supercharacters, the Bratteli diagram produces a decomposition of a subalgebra of the centralizer algebra that treats supermodules as irreducibles.  For the partition algebra, paths in the Bratteli diagram are indexed by a set of combinatorial objects called vacillating tableaux.  We create an analog of vacillating tableaux, known as shell tableaux, built from a generalization of shells.  Next, we construct a bijection between shell tableaux and paths in the Bratteli diagram.  When $q=2$, we remove a condition on shell tableaux to produce a bijection with weighted paths in the Bratteli diagram.  In contrast with the symmetric group, these weights account for the multiplicities in our Bratteli diagram.  On the whole, the shell combinatorics developed from this paper may help compute in other algebraic structures related to the supercharacter theory of $U_n$, such as the Hopf algebra of symmetric functions in noncommuting variables.

\vspace{.25cm}
 
 \noindent\textbf{Acknowledgements.}  I would like to thank Nat Thiem for all his help and guidance. 

%
%

\section{Preliminaries}\label{Preliminaries}

This section reviews a supercharacter theory for the group of unipotent upper triangular matrices and the combinatorics of its representation theory based on set partitions.

\subsection{A supercharacter theory for $U_n$}
A \textit{supercharacter theory} of a group $G$ consists of a set of \textit{superclasses} $\mathcal{K}$ and a set of \textit{supercharacters} $\mathcal{X}$ such that
\begin{enumerate}[label=(\alph*)]
\item the set $\mathcal{K}$ is a partition of $G$ into unions of conjugacy classes,
\item the set $\mathcal{X}$ is a set of characters such that each irreducible character of $G$ is a constituent of exactly one supercharacter,
\item $|\mathcal{K}|= |\mathcal{X}|$,
\item the supercharacters are constant on superclasses.
\end{enumerate}

Every group $G$ has two ``trivial'' supercharacter theories: the usual character theory, and the supercharacter theory with $\mathcal{K} = \{ \{1\}, G-\{1\} \} $ and $\mathcal{X} = \{\mathbbm{1}, \chi_{\text{reg}} -\mathbbm{1}\}$ where $\mathbbm{1}$ is the trivial character of $G$ and $\chi_{\text{reg}}$ is the regular character.  While many finite groups have several supercharacter theories \cite{supercharacter}, preference is given to supercharacter theories that strike a balance between computability and producing better approximations of the usual character theory.

We focus on the supercharacter theory on $U_n$ given in \cite{branching rules} that is a slight coarsening of the traditional supercharacter theory of Andr\'e and Yan.

Let $U_n$ be the subgroup of unipotent upper-triangular matrices of the general linear group $GL_n(\mathbb{F}_q)$ over the finite field $\mathbb{F}_q$ with $q$ elements, $B_n$ be the normalizer of $U_n$ in $GL_n(\mathbb{F}_q)$ consisting of upper triangular matrices, and
\[\mathfrak{u}_n = U_n - 1\]
be the nilpotent $\mathbb{F}_q$-algebra of strictly upper triangular matrices.  The subgroup $B_n$ acts by left and right multiplication on $\mathfrak{u}_n$, and the superclasses are given by the two-sided orbits
\[\begin{array}{ccc}
B_n \mathfrak{u}_n B_n & \longleftrightarrow& \mathcal{K} \\
B_n x B_n &\mapsto&1+ B_n x  B_n.
 \end{array}\]

Following the construction in \cite{rainbow}, fix a nontrivial homomorphism $\vartheta: \mathbb{F}_q^+ \to \mathbb{C}^\times$.  The $\mathbb{F}_q$-vector space of $n \times n$ matrices $\mathfrak{gl}_n(\mathbb{F}_q)$ decomposes in terms of upper triangular matrices $\mathfrak{b}_n$ and strictly lower triangular matrices $\mathfrak{l}_n$ as
\[ \mathfrak{gl}_n = \mathfrak{b}_n \oplus \mathfrak{l}_n .\]
Identifying $\mathfrak{l}_n$ with $\mathfrak{gl}_n/\mathfrak{b}_n$ makes $\mathfrak{l}_n$ a canonical set of coset representatives in $\mathfrak{gl}_n/\mathfrak{b}_n$.  For $v \in \mathfrak{gl}_n$ define
\[ \bar{v} = (v+\mathfrak{b}_n) \cap \mathfrak{l}_n .\]
Then for $v \in \mathfrak{l}_n$, 
\[ \Cspan \{\overline{a v} \mid a \in B_n  \} \]
is $U_n$-supermodule with left action 
\[ u w = \vartheta(\tr((u-1)w) (\overline{uw})\quad \text{for} \quad u \in U_n, w \in \mathfrak{l}_n \]
and right action
\[ w u = \vartheta(\tr(w(u^{-1}-1)) (\overline{wu^{-1}})\quad \text{for} \quad u \in U_n, w \in \mathfrak{l}_n. \]
The two-sided orbits from extending these actions on $\mathfrak{l}_n$ to the normalizer subgroup $B_n$ yields corresponding supercharacters given by,
\[\begin{array}{ccc}
B_n \mathfrak{l}_n B_n & \longleftrightarrow& \mathcal{X} \\
B_n v B_n &\mapsto& \quad g\mapsto \displaystyle{\frac{|B_n v|}{|B_n v B_n|}} \sum_{ w \in B_n v B_n} \vartheta(\tr((g -1) w)).
 \end{array}\]

In constructing the supercharacters of $U_n$ it is more common to construct a module structure on the dual $\mathfrak{u}_n^*$, where $\mathfrak{u}_n = U_n -1$ as in \cite{supercharacter}.  However, the actions of $B_n$ on $\mathfrak{l}_n$ are a translation of the actions on $\mathfrak{u}^*_n$ that make studying modules more straightforward \cite{rainbow}.

By elementary row and column operations we may choose orbit representatives for the two-sided action of $B_n$ on $\mathfrak{u}_n$ and $\mathfrak{l}_n$ so that there is a one to one correspondence between
 \[\left\{ \begin{array}{c} \text{superclasses}\\ \text{ of $U_n$}\end{array}\right\} \longleftrightarrow \left\{u \in U_n\ \Bigg|  \begin{array}{c} u-1 \ \text{has at most one 1}\\ \text{in every row and column} \end{array}\right\} \] 
 \[\left\{ \begin{array}{c} \text{supercharacters}\\ \text{ of $U_n$}\end{array}\right\} \longleftrightarrow \left\{v \in \mathfrak{l}_n\ \Bigg|  \begin{array}{c} v \ \text{has at most one 1}\\ \text{in every row and column} \end{array}\right\}. \] 
These representatives are indexed by set partitions.

If instead of considering the orbits of the full subgroup $B_n$, we consider the $U_n$ orbits on the group $\mathfrak{u}_n$ and its dual $\mathfrak{u}_n^\ast$, then we obtain the traditional supercharacter theory of Andr\'e and Yan.  In this case the combinatorics depends on the finite field $\mathbb{F}_q$ and is based on $\mathbb{F}_q^\times$-colored set partitions.

\subsection{Set Partition Combinatorics}\label{set partition}

Define $[n] = \{ 1,2, \ldots, n\}$.  A \textit{set partition} $\lambda$ of $[n]$ is a subset $\{(i,j) \in [n] \times [n] \mid i< j\}$ such that if $(i,k),(j,l) \in \lambda$, then $i=j$ if and only if $k=l$.  We represent each set partition $\lambda\vdash [n]$ diagrammatically as a set of arcs on a row of $n$ nodes so that if $(i,j) \in \lambda$, then there is an arc connecting the $i$th node to the $j$th node.  For example,
\[\begin{tikzpicture}
\node at (-4,0) {$\{1\frown 3 , 3 \frown 5 , 2 \frown 6 \}
\qquad\longleftrightarrow\qquad 
\qquad$};
\draw[fill=black] (-1,0) circle (.07);
\node at (-1,-.2) {\tiny 1};
 \draw[fill=black] (-.5,0) circle (.07);
 \node at (-.5,-.2) {\tiny 2};
\draw[fill=black] (.0,0) circle (.07);
\node at (0,-.2) {\tiny 3};
\draw[fill=black] (.5,0) circle (.07);
\node at (.5,-.2) {\tiny 4};
\draw[fill=black] (1,0) circle (.07);
\node at (1,-.2) {\tiny 5};
\draw[fill=black] (1.5,0) circle (.07);
\node at (1.5,-.2) {\tiny 6};
\draw[line width=.2mm] (-1,0) to[in=110, out=70, circle, looseness=1.4] (0,0);
\draw[line width=.2mm] (-.5,0) to[in=110, out=70, circle, looseness=1.35] (1.5,0);
\draw[line width=.2mm] (0,0) to[in=110, out=70, circle, looseness=1.4] (1,0);
\node at (2.5,0) {or};
\draw[fill=black] (3.5,0) circle (.07);
\node at (3.5,.2) {\tiny 1};
 \draw[fill=black] (4,0) circle (.07);
 \node at (4, .2) {\tiny 2};
\draw[fill=black] (4.5,0) circle (.07);
\node at (4.5,.2) {\tiny 3};
\draw[fill=black] (5,0) circle (.07);
\node at (5,.2) {\tiny 4};
\draw[fill=black] (5.5,0) circle (.07);
\node at (5.5,.2) {\tiny 5};
\draw[fill=black] (6,0) circle (.07);
\node at (6,.2) {\tiny 6};
\draw[line width=.2mm] (3.5,0) to[in=-110, out=-70, circle, looseness=1.4] (4.5,0);
\draw[line width=.2mm] (4,0) to[in=-110, out=-70, circle, looseness=1.35] (6,0);
\draw[line width=.2mm] (4.5,0) to[in=-110, out=-70, circle, looseness=1.4] (5.5,0);
\node at (6.5,0) {.};
\end{tikzpicture}\]
In these diagrams it is natural to draw the arcs above or below the nodes.  We will use both orientations to compare set partitions.  We typically refer to the pair $(i,j)$ as an \textit{arc} in $\lambda$ and write $(i,j)= i\frown j$ or $(i,j) = i \smile j$ to specify the arc.  For each arc $(i, j) \in \lambda$ we call $i$ the \textit{left endpoint} and $j$ the \textit{right endpoint}.  The sets of left and right endpoints of $\lambda$ are given by
\[ le(\lambda) = \{i \in [n] \mid (i, j) \in \lambda, \ \text{for some}\ j \in [n]\}\]\[re(\lambda) = \{j \in [n] \mid (i, j) \in \lambda, \ \text{for some}\ i \in [n]\}. \]
 We say two arcs \textit{conflict} if they have the same the same left or right endpoints.  Thus no arcs conflict in a set partition.
 
 We obtain the more traditional definition of set partitions by taking $\textrm{part}(\lambda)$ for $\lambda \vdash [n]$ to be the set of equivalence classes on $[n]$ given by the reflexive transitive closure of $i \sim j$ if $(i, j) \in \lambda$.  For instance,
 \[\begin{tikzpicture}
\node at (-1.6,0) {part\Bigg(};
 \draw[fill=black] (-1,0) circle (.07);
\node at (-1,-.2) {\tiny 1};
 \draw[fill=black] (-.5,0) circle (.07);
 \node at (-.5,-.2) {\tiny 2};
\draw[fill=black] (.0,0) circle (.07);
\node at (0,-.2) {\tiny 3};
\draw[fill=black] (.5,0) circle (.07);
\node at (.5,-.2) {\tiny 4};
\draw[fill=black] (1,0) circle (.07);
\node at (1,-.2) {\tiny 5};
\draw[fill=black] (1.5,0) circle (.07);
\node at (1.5,-.2) {\tiny 6};
\draw[line width=.2mm] (-1,0) to[in=110, out=70, circle, looseness=1.4] (0,0);
\draw[line width=.2mm] (-.5,0) to[in=110, out=70, circle, looseness=1.35] (1.5,0);
\draw[line width=.2mm] (0,0) to[in=110, out=70, circle, looseness=1.4] (1,0);
\node at (3.8,0) {\Bigg) = $\{\{1,3,5\}, \{ 2,6\}, \{4\} \}$.};
 \end{tikzpicture} \]
Note the connected components of the diagram are the parts of the set partition and the arcs are the adjacent pairs of elements in each part.

There are some natural statistics on set partitions \cite{diaconis}.  For a set partition $\lambda \vdash [n]$ the \textit{dimension} is 
$$\dim (\lambda) = \sum_{i \frown j\in \lambda} j-i -1.$$
For a pair of set partitions $\lambda, \mu \vdash [n]$ define
\begin{eqnarray*}
&\Crs(\lambda, \mu) = \{ ((i, k) , (j , l))\in \lambda \times \mu \mid i<j<k<l\} , \qquad& \crs(\lambda, \mu) = |\Crs(\lambda, \mu)|,\\
&\Nst_\mu^\lambda = \{ ((i , l) , (j , k))\in \lambda \times \mu \mid i<j<k<l\} ,\qquad \qquad& \nst_\mu^\lambda = |\Nst_\mu^\lambda |  
\end{eqnarray*}
as the \textit{crossing set}, \textit{crossing number}, \textit{nesting set}, and \textit{nesting number} respectively.  To illustrate, if
\[\begin{tikzpicture}
\node at (-2,.25) {$\lambda =$};
 \draw[fill=black] (-1,0) circle (.07);
 \draw[fill=black] (-.5,0) circle (.07);
\draw[fill=black] (.0,0) circle (.07);
\draw[fill=black] (.5,0) circle (.07);
\draw[fill=black] (1,0) circle (.07);
\draw[fill=black] (1.5,0) circle (.07);
\draw[line width=.2mm] (-1,0) to[in=110, out=70, circle, looseness=1.4] (.5,0);
\draw[line width=.2mm] (0,0) to[in=110, out=70, circle, looseness=1.4] (1,0);
\node at (4,0) {and};
 \end{tikzpicture}\qquad \qquad  \begin{tikzpicture}
\node at (-2,.15) {$\mu =$};
 \draw[fill=black] (-1,0) circle (.07);
 \draw[fill=black] (-.5,0) circle (.07);
\draw[fill=black] (.0,0) circle (.07);
\draw[fill=black] (.5,0) circle (.07);
\draw[fill=black] (1,0) circle (.07);
\draw[fill=black] (1.5,0) circle (.07);
\draw[line width=.2mm] (-.5,0) to[in=110, out=70, circle, looseness=1.35] (1.5,0);
\draw[line width=.2mm] (0,0) to[in=110, out=70, circle, looseness=1.4] (1,0);
\node at (2,0) {,};
 \end{tikzpicture}  \]
 then we have
 \[ \dim(\lambda) = 3, \quad \crs(\lambda, \lambda) = 1, \quad \nst_\lambda^\lambda = 0, \qquad \qquad \dim(\mu) = 4, \quad \crs(\mu, \mu) =0, \quad \nst_\mu^\mu = 1.\]
Superimposing $\lambda$ and $\mu$, where the arcs of $\lambda$ are dashed
\[\begin{tikzpicture}
\node at (-2,.25) {$\lambda \cup \mu =$};
 \draw[fill=black] (-1,0) circle (.07);
 \draw[fill=black] (-.5,0) circle (.07);
\draw[fill=black] (.0,0) circle (.07);
\draw[fill=black] (.5,0) circle (.07);
\draw[fill=black] (1,0) circle (.07);
\draw[fill=black] (1.5,0) circle (.07);
\draw[line width=.2mm, style=dashed] (-1,0) to[in=110, out=70, circle, looseness=1.4] (.5,0);
\draw[line width=.2mm] (-.5,0) to[in=110, out=70, circle, looseness=1.35] (1.5,0);
\draw[line width=.2mm] (0,0) to[in=110, out=70, circle, looseness=1.5] (1,0);
\draw[line width=.2mm, style=dashed] (0,0) to[in=130, out=50, circle, looseness=1.5] (1,0);
\end{tikzpicture} \]
yields
\[\Crs(\lambda, \mu) = \{(1\frown 4,2 \frown 6), (1\frown 4, 3\frown 5)\}, \qquad \Nst^\lambda_\mu = \emptyset\]
but,
\[\Crs(\mu, \lambda) = \emptyset, \qquad \Nst^\mu_\lambda = \{(2\frown 6, 3\frown 5)\}.\]
While it is not generally true that $\Crs(\lambda, \mu) = \Crs(\mu, \lambda)$, it follows from the definition of a crossing number that for all set partitions $\lambda, \mu, \nu \vdash [n]$,
\begin{eqnarray}\label{crossingidentity}
\Crs(\lambda, \mu \cup \nu) &=& \Crs(\lambda, \mu) + \Crs(\lambda, \nu) \\
\Crs(\lambda \cup \mu, \nu) &=& \Crs(\lambda, \nu) + \Crs(\mu, \nu) .
\end{eqnarray}

\subsection{An uncolored supercharacter theory}\label{uncolored}

We describe the correspondence between set partitions and the superclasses and supercharacters of $U_n$.  Given a set partition $\lambda \vdash [n]$, we construct a representative $u_\lambda$ of a superclass of $U_n$ by
\[ (u_\lambda)_{i,j} = \left\{ \begin{array}{cl} 1 & \text{if} \ i\frown j \in \lambda\ \text{or}\ i = j, \\ 0 & \text{otherwise.}\end{array}\right. \]
For instance, the correspondence between $\lambda$ and $u_\lambda$ is given as follows 
\[\begin{tikzpicture}
\node at (-2,.25) {$\lambda =$};
 \draw[fill=black] (-1,0) circle (.07);
 \draw[fill=black] (-.5,0) circle (.07);
\draw[fill=black] (.0,0) circle (.07);
\draw[fill=black] (.5,0) circle (.07);
\draw[fill=black] (1,0) circle (.07);
\draw[fill=black] (1.5,0) circle (.07);
\draw[line width=.2mm] (-1,0) to[in=110, out=70, circle, looseness=1.4] (0,0);
\draw[line width=.2mm] (-.5,0) to[in=110, out=70, circle, looseness=1.35] (1.5,0);
\draw[line width=.2mm] (0,0) to[in=110, out=70, circle, looseness=1.4] (1,0);
\end{tikzpicture} \qquad \longleftrightarrow \qquad u_\lambda-1 = \left[\begin{array}{cccccc} 0 & 0 & 1 & 0 & 0& 0\\ 0 & 0 & 0 & 0 &0 & 1\\ 0 & 0 & 0 & 0 & 1 & 0\\ 0 & 0 & 0 & 0 & 0 & 0 \\ 0& 0 & 0 & 0 & 0 & 0\\0& 0 & 0 & 0 & 0 & 0\\\end{array}\right].\]
The corresponding superclass $\mathcal{K}_\lambda$ is
\[\mathcal{K}_\lambda = 1+ B_n (u_\lambda-1) B_n. \]

Similarly, a representative $v_\lambda$ for the two-sided action of $B_n$ on $\mathfrak{l}_n$ is
\[ (v_\lambda)_{k,j} = \left\{ \begin{array}{cl} 1 & \text{if} \ j\frown k \in \lambda, \\ 0 & \text{otherwise}\end{array}\right. \]
so that
\[V^\lambda \cong \Cspan \{ \overline{av_\lambda} \mid a \in B_n \}\]
and for $g \in U_n$, the corresponding supercharacter $\chi^\lambda$ is defined as
\[ \chi^{\lambda}(g) = \frac{|B_n v_\lambda|}{|B_n v_\lambda B_n|} \sum_{v \in B_n v_\lambda B_n} \vartheta(\tr((g-1) \ v)).\] 

Amazingly, many properties of these supercharacters can be determined using statistics of set partitions.

\begin{proposition}[{\cite[Bragg, Thiem, Proposition 2.1]{rainbow}}]\label{uncoloredsupercharacter}
For $\lambda, \mu \vdash [n]$, we have
\begin{equation*}
\chi^{\lambda}(u_\mu)=\left\{\begin{array}{ll}\displaystyle \frac{(-1)^{|\lambda \cap \mu|}q^{\dim(\lambda)}(q-1)^{|\lambda - \mu|}}{q^{\nst_\mu^\lambda}}  & \begin{array}{@{}l}\text{\text{if}\ $i<j<k$, $i\frown k\in \lambda$}\\ \text{\text{then}\ $i\frown j,j\frown k\notin \mu$,}\end{array}\\ 0 & \text{otherwise.}\end{array}\right.
\end{equation*}
\end{proposition}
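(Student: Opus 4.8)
The plan is to evaluate the defining orbit sum directly and reduce to the case where $\lambda$ is a single arc. I would first record the reduction. Writing $v_\lambda=\sum_{i\frown k\in\lambda}E_{k,i}$ as a sum of elementary matrices and re-expressing the sum over the two-sided orbit $B_n v_\lambda B_n$ as an average over $B_n\times B_n$ gives
\[ \chi^\lambda(u_\mu)=\frac{|B_n v_\lambda|}{|B_n|^2}\sum_{a,b\in B_n}\vartheta\bigl(\tr((u_\mu-1)\,a v_\lambda b)\bigr), \]
and a short computation shows $\tr((u_\mu-1)\,a v_\lambda b)=\sum_{i\frown k\in\lambda}\sum_{s\frown t\in\mu}b_{i,s}a_{t,k}$. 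Since $\lambda$ is a set partition, the left endpoints $i$ and the right endpoints $k$ of its arcs are pairwise distinct, so the inner summand attached to a given $\lambda$-arc $i\frown k$ involves only row $i$ of $b$ and column $k$ of $a$; distinct $\lambda$-arcs therefore involve disjoint coordinates of $(a,b)$, so $\vartheta$ of the exponent factors over the arcs of $\lambda$ and the sum over $B_n\times B_n$ factors accordingly. Together with $|B_n v_\lambda|=(q-1)^{|\lambda|}q^{\dim(\lambda)}=\prod_{i\frown k\in\lambda}(q-1)q^{k-i-1}$, this gives the multiplicativity $\chi^\lambda(u_\mu)=\prod_{i\frown k\in\lambda}\chi^{\{i\frown k\}}(u_\mu)$ (equivalently, $V^\lambda\cong\bigotimes_{i\frown k\in\lambda}V^{\{i\frown k\}}$ as supermodules).

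It then remains to evaluate $\chi^{\{i\frown k\}}(u_\mu)$, where $v_\lambda=E_{k,i}$. Since $aE_{k,i}b$ has $(s,t)$-entry $a_{s,k}b_{i,t}$, it is a rank-one matrix, and an elementary row/column analysis identifies the two-sided orbit $B_n E_{k,i}B_n\subseteq\mathfrak{l}_n$ with the set of strictly lower-triangular matrices $M$ supported on the ``box'' $\{(t,s):i\le s<t\le k\}$ having $M_{k,i}\neq 0$ and satisfying the rank-one relations $M_{t,s}M_{k,i}=M_{t,i}M_{k,s}$; such $M$ are parametrized freely by $c=M_{k,i}\in\mathbb{F}_q^\times$, by $x_t=M_{t,i}\in\mathbb{F}_q$ for $i<t<k$, and by $y_s=M_{k,s}\in\mathbb{F}_q$ for $i<s<k$, with $M_{t,s}=c^{-1}x_t y_s$. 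This yields $|B_n E_{k,i}B_n|=(q-1)q^{2(k-i-1)}$ and $|B_n E_{k,i}|=(q-1)q^{k-i-1}$, so the scalar prefactor in the character formula is $q^{-(k-i-1)}$. Finally $\tr((u_\mu-1)M)=\sum_{s\frown t\in\mu}M_{t,s}$, and by the support condition only $\mu$-arcs lying inside $[i,k]$ contribute: the arc $i\frown k$ itself (if it lies in $\mu$) contributes $c$; an arc $i\frown t$ or $s\frown k$ with $i<t,s<k$ contributes $x_t$ or $y_s$; and a strictly nested arc $s\frown t$ with $i<s<t<k$ contributes $c^{-1}x_t y_s$.

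I would then carry out the Gauss-type summation $\sum_{c}\sum_{x,y}\vartheta(\tr((u_\mu-1)M))$. If some $\mu$-arc $i\frown t$ or $s\frown k$ with $i<t,s<k$ is present, then because $t$ (resp.\ $s$) is already an endpoint of that $\mu$-arc no other $\mu$-arc meets it, so the corresponding variable $M_{t,i}$ (resp.\ $M_{k,s}$) occurs in the exponent with coefficient exactly $1$ and in no other term; summing that variable over $\mathbb{F}_q$ annihilates the whole sum, so $\chi^{\{i\frown k\}}(u_\mu)=0$ --- precisely the asserted vanishing case. Otherwise the exponent is $\epsilon c+c^{-1}\sum_{r=1}^N x_{t_r}y_{s_r}$, where $\epsilon=1$ if $i\frown k\in\mu$ and $0$ otherwise, and $(s_1,t_1),\dots,(s_N,t_N)$ are the $N=\nst_\mu^{\{i\frown k\}}$ strictly nested $\mu$-arcs, whose left (resp.\ right) endpoints are pairwise distinct. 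The variables not appearing contribute $q^{2(k-i-1-N)}$; since the $N$ pairs $x_{t_r},y_{s_r}$ involve pairwise distinct variables, for each fixed $c$ one has $\sum_{x,y}\vartheta(c^{-1}\sum_r x_{t_r}y_{s_r})=\prod_{r=1}^N\bigl(\sum_{x_{t_r},y_{s_r}\in\mathbb{F}_q}\vartheta(c^{-1}x_{t_r}y_{s_r})\bigr)=q^N$; and $\sum_{c\in\mathbb{F}_q^\times}\vartheta(\epsilon c)$ equals $q-1$ if $\epsilon=0$ and $-1$ if $\epsilon=1$. Multiplying by the prefactor $q^{-(k-i-1)}$ gives
\[ \chi^{\{i\frown k\}}(u_\mu)=\frac{(-1)^{|\{i\frown k\}\cap\mu|}\,q^{\,k-i-1}\,(q-1)^{|\{i\frown k\}-\mu|}}{q^{\,\nst_\mu^{\{i\frown k\}}}}. \]
Taking the product over the arcs of $\lambda$ via the reduction above, and using $\dim(\lambda)=\sum_{i\frown k\in\lambda}(k-i-1)$ and $\nst_\mu^\lambda=\sum_{i\frown l\in\lambda}\nst_\mu^{\{i\frown l\}}$, recovers the stated formula, the global vanishing condition being exactly the statement that no single-arc factor vanishes.

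I expect the main obstacle to be establishing the explicit description --- and in particular the cardinality $(q-1)q^{2(k-i-1)}$ --- of the single-arc two-sided orbit $B_n E_{k,i}B_n$, on which the whole computation hinges; a secondary point requiring care is the multiplicativity step, which I would justify either by the direct disjointness-of-endpoints argument above or by citing the tensor decomposition of these supermodules.
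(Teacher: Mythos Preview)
The paper does not actually prove this proposition: it is quoted verbatim from \cite[Proposition 2.1]{rainbow} and used as a black box, so there is no ``paper's own proof'' against which to compare. Your direct computation is therefore an independent proof rather than a reproduction.

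That said, your argument is sound. The rewriting of the orbit sum as $\frac{|B_n v_\lambda|}{|B_n|^2}\sum_{a,b\in B_n}\vartheta(\tr((u_\mu-1)\,a v_\lambda b))$ is valid regardless of whether one views the two-sided orbit in $\mathfrak{gl}_n$ or after projection to $\mathfrak{l}_n$, since $u_\mu-1$ is strictly upper triangular and so the trace sees only the strictly lower-triangular part; the fiber-counting then cancels $|B_n v_\lambda B_n|$ exactly as you use it. The factorization over arcs of $\lambda$ is justified by the disjointness of left and right endpoints in a set partition, which makes the variables $\{a_{\bullet,k}\}$ and $\{b_{i,\bullet}\}$ attached to distinct arcs genuinely independent as coordinates on $B_n\times B_n$; combined with $|B_n v_\lambda|=\prod_{i\frown k\in\lambda}(q-1)q^{k-i-1}$ this gives the multiplicativity cleanly, without circularity (the paper only derives the tensor factorization \emph{from} the proposition). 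Your single-arc orbit description, the resulting Gauss-sum evaluation, and the identification of the vanishing condition all check out; in particular, your use of the set-partition property of $\mu$ to ensure that the variable $x_t$ (resp.\ $y_s$) attached to a half-nested arc $i\frown t$ (resp.\ $s\frown k$) appears with coefficient exactly $1$ and nowhere else is the crucial point, and it is correctly argued.
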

In particular the trivial supercharacter $\mathbbm{1}$ is the supercharacter $\chi^\varnothing$ corresponding to the empty set partition of $[n]$, and the degree of each supercharacter is
\[ \chi^\lambda(1) = q^{\dim(\lambda)}(q-1)^{|\lambda| } .\]
It also follows from the formula that supercharacters factor as tensor products of arcs
\begin{equation}\label{arctensor}
\chi^\lambda = \bigodot_{i \frown j \in \lambda} \chi^{i \frown j} \qquad \text{where}\ (\chi \odot \psi)(g) = \chi(g) \psi(g).
\end{equation}

With respect to the inner product the supercharacters form an orthogonal set.
\begin{proposition}\label{superorthogonality}
For $\lambda, \mu \vdash [n]$, we have
\[\langle \chi^\lambda, \chi^\mu \rangle = \delta_{\lambda\mu} (q-1)^{|\lambda|} q^{\crs(\lambda, \lambda)} .\]
\end{proposition}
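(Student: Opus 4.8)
The plan is to compute $\langle\chi^\lambda,\chi^\mu\rangle$ directly from the orbit--sum definition of the supercharacters and reduce everything to the size of a single two--sided $B_n$-orbit on $\mathfrak{l}_n$. For $v\in\mathfrak{l}_n$, let $\widehat{v}$ denote the function $g\mapsto\vartheta(\tr((g-1)v))$ on $U_n$, so that the definition reads $\chi^\lambda=\frac{|B_nv_\lambda|}{|B_nv_\lambda B_n|}\sum_{v\in B_nv_\lambda B_n}\widehat{v}$. The first step is to show that $\{\widehat{v}\mid v\in\mathfrak{l}_n\}$ is orthonormal for the Hermitian form $\langle\alpha,\beta\rangle=\frac{1}{|U_n|}\sum_{g\in U_n}\alpha(g)\overline{\beta(g)}$ on all complex--valued functions on $U_n$ (which restricts to the usual inner product on class functions, so applying it to supercharacters is legitimate). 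Since $g\mapsto g-1$ is a bijection $U_n\to\mathfrak{u}_n$ and $\vartheta(a)\overline{\vartheta(b)}=\vartheta(a-b)$,
\[\langle\widehat{v},\widehat{w}\rangle=\frac{1}{|\mathfrak{u}_n|}\sum_{x\in\mathfrak{u}_n}\vartheta\bigl(\tr(x(v-w))\bigr),\]
and this sum vanishes unless the additive character $x\mapsto\vartheta(\tr(x(v-w)))$ of $\mathfrak{u}_n$ is trivial. Since $\tr(xz)=\sum_{i<j}x_{ij}z_{ji}$ whenever $x$ is strictly upper and $z$ strictly lower triangular, the trace form restricts to a nondegenerate pairing $\mathfrak{u}_n\times\mathfrak{l}_n\to\mathbb{F}_q$; so triviality forces $v=w$, and $\langle\widehat{v},\widehat{w}\rangle=\delta_{v,w}$.

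Expanding bilinearly then gives
\[\langle\chi^\lambda,\chi^\mu\rangle=\frac{|B_nv_\lambda|\,|B_nv_\mu|}{|B_nv_\lambda B_n|\,|B_nv_\mu B_n|}\,\bigl|B_nv_\lambda B_n\cap B_nv_\mu B_n\bigr|.\]
The two--sided $B_n$-orbits partition $\mathfrak{l}_n$, and by the classification of orbit representatives recalled in Section~\ref{uncolored} distinct set partitions index distinct orbits; hence the intersection is empty when $\lambda\neq\mu$, producing the factor $\delta_{\lambda\mu}$. When $\lambda=\mu$ the intersection is all of $B_nv_\lambda B_n$, so $\langle\chi^\lambda,\chi^\lambda\rangle=|B_nv_\lambda|^2/|B_nv_\lambda B_n|$. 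Evaluating the defining sum at $g=1$ turns every summand into $1$, whence $|B_nv_\lambda|=\chi^\lambda(1)=q^{\dim(\lambda)}(q-1)^{|\lambda|}$ by the degree formula following Proposition~\ref{uncoloredsupercharacter}.

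It remains to establish $|B_nv_\lambda B_n|=q^{\,2\dim(\lambda)-\crs(\lambda,\lambda)}(q-1)^{|\lambda|}$, which is the crux of the argument. I would prove it by running the two--sided reduction explicitly. The orbit consists of the matrices $\overline{av_\lambda b}$ with $a,b\in B_n$: the left factor rescales rows and adds a multiple of a row to an earlier row, the right factor rescales columns and adds a multiple of a column to a later column, and $v\mapsto\bar v$ then projects onto $\mathfrak{l}_n$. Starting from $v_\lambda$ (a single $1$ in position $(k,j)$ for each arc $j\frown k\in\lambda$), adding row $k$ to an earlier row $s$ deposits a new entry in position $(s,j)$, which survives the projection precisely for the $k-j-1$ indices $s\in\{j+1,\dots,k-1\}$; summing over arcs yields $\dim(\lambda)$ genuinely new ``row'' coordinates, and symmetrically $\dim(\lambda)$ new ``column'' coordinates. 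Two row moves never collide (that would need two arcs with the same left endpoint) and neither do two column moves, while a row move and a column move land in the same cell exactly when their arcs cross, i.e.\ form a pair $i\frown k,\ j\frown l$ of $\lambda$ with $i<j<k<l$; thus exactly $\crs(\lambda,\lambda)$ of the $2\dim(\lambda)$ additive coordinates are redundant. Together with the $|\lambda|$ independent nonzero rescalings (one per arc) this gives the stated order, and therefore
\[\langle\chi^\lambda,\chi^\lambda\rangle=\frac{\bigl(q^{\dim(\lambda)}(q-1)^{|\lambda|}\bigr)^{2}}{q^{\,2\dim(\lambda)-\crs(\lambda,\lambda)}(q-1)^{|\lambda|}}=(q-1)^{|\lambda|}q^{\crs(\lambda,\lambda)}.\]
The orthonormality of the $\widehat{v}$'s and the orbit--partition step are routine; the genuine obstacle is the orbit--size count, namely checking carefully that after the projection the only dependencies among the elementary row and column operations are the pairwise collisions coming from the self--crossings of $\lambda$. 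If a formula for the sizes of two--sided $B_n$-orbits on $\mathfrak{l}_n$ (equivalently, for superclass sizes, via $u_\lambda\leftrightarrow v_\lambda$) is available in \cite{rainbow} or \cite{branching rules}, this last step can instead simply be quoted.
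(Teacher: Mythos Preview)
Your argument is correct, and it is considerably more detailed than what the paper supplies: the paper does not prove Proposition~\ref{superorthogonality} at all but simply remarks that it ``can be proved from \cite[Thiem, (2.3)]{branching rules}.'' So there is no proof in the paper to compare against, only a citation.

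Your route is the natural direct computation. The orthonormality of the functions $\widehat v$ via nondegeneracy of the trace pairing $\mathfrak u_n\times\mathfrak l_n\to\mathbb F_q$ is standard and correct, and the reduction to $|B_nv_\lambda|^2/|B_nv_\lambda B_n|$ together with $|B_nv_\lambda|=\chi^\lambda(1)=q^{\dim(\lambda)}(q-1)^{|\lambda|}$ is exactly right. The one place that needs care is, as you already flag, the two-sided orbit size $|B_nv_\lambda B_n|=q^{2\dim(\lambda)-\crs(\lambda,\lambda)}(q-1)^{|\lambda|}$. Your row/column heuristic identifies the correct free and colliding coordinates, but as stated it is a dimension count rather than a bijection; to make it rigorous you would want to parametrize the orbit explicitly (e.g.\ show that the map sending the tuple of scalings and surviving row/column additions to $\overline{av_\lambda b}$ is a bijection onto the orbit). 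Since you already note that this orbit/superclass size is recorded in \cite{rainbow} and \cite{branching rules}, quoting it there is in fact precisely what the paper's one-line ``proof'' amounts to.
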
\label{orthogonality}
\noindent Proposition \ref{superorthogonality} can be proved from \cite[Thiem, (2.3)]{branching rules}.
The crossing number $\crs(\lambda, \lambda)$ helps measure how close a supercharacter is to being irreducible.

%
%

\section{Branching Rules}\label{Branching Rules}

An important property of the supercharacters of $U_n$ is that their restriction to any subgroup is a linear combination of supercharacters with nonnegative integer coefficients \cite{supercharacter}.  However, the coefficients in the restriction decompositions are not well understood \cite{branching rules}.  We provide a combinatorial formula for calculating the restriction of supercharacters of $U_n$ to $U_{n-1}$.  Using Frobenius reciprocity, we obtain a corresponding formula for inducing supercharacters.  Since these formulas depend on the number of nonzero elements in the field $\mathbb{F}_q$, fix 
\[t= q-1.\]

\subsection{Restriction}

We consider the restriction of supercharacters from $U_n$ to $U_{n-1}$ by embedding $U_{n-1} \subseteq U_n$ as
\[ U_{n-1} = \{u \in U_n \mid (u-1)_{ij} \neq 0 \ \text{implies}\ i<j<n \} .\]
Since supercharacters decompose into tensor products of arcs (\ref{arctensor}), for $\lambda \vdash [n]$, we have
\[\chi^\lambda = \bigodot_{i \frown l \in \lambda} \chi^{i \frown l} \quad \text{and} \quad \Res^{U_n}_{U_{n-1}}(\chi^\lambda) = \bigodot_{i \frown l \in \lambda} \Res^{U_n}_{U_{n-1}} ( \chi^{i \frown l}). \]
Consequently we compute restrictions for each $\chi^{i \frown l}$ and use the tensor product to glue together the resulting restrictions. 

The restriction of the supercharacter $\chi^{i \frown l}$ is given using the formulas in \cite{branching rules} for computing restrictions in Andr\'e and Yan's traditional supercharacter theory.

\begin{proposition}\label{restrictionarc}
For $1\leq i < l \leq n$, the restriction $\Res^{U_n}_{U_{n-1}} (\chi^{i \frown l})$ is given by
\[  \Res^{U_n}_{U_{n-1}} (\chi^{i \frown l}) = \left\{
     \begin{array}{lr}
       \chi^{i \frown l} & \text{if}\ l \neq n, \\
      t\bigg(\mathbbm{1}+ \displaystyle\sum_{i<k<l} \chi^{i \frown k}\bigg)  & \text{if}\ l = n.
     \end{array}
   \right.\]

\end{proposition}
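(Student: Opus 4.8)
The plan is to compute $\Res^{U_n}_{U_{n-1}}(\chi^{i\frown l})$ by expanding it in the basis of supercharacters of $U_{n-1}$ using inner products, which by Proposition~\ref{superorthogonality} amounts to evaluating $\langle \Res^{U_n}_{U_{n-1}}(\chi^{i\frown l}), \chi^\mu\rangle$ for $\mu\vdash[n-1]$ and then normalizing by $(q-1)^{|\mu|}q^{\crs(\mu,\mu)}$. By Frobenius reciprocity (or directly by the definition of restriction of class functions summed over $U_{n-1}$), this equals $\frac{1}{|U_{n-1}|}\sum_{u\in U_{n-1}} \chi^{i\frown l}(u)\,\overline{\chi^\mu(u)}$, so everything reduces to the explicit character values in Proposition~\ref{uncoloredsupercharacter} evaluated on superclass representatives $u_\mu$ with $\mu$ a set partition of $[n-1]$, together with counting the sizes of $U_{n-1}$-superclasses inside $U_{n-1}$.

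When $l\neq n$, the arc $i\frown l$ lives entirely within $[n-1]$, and one expects $\chi^{i\frown l}$ restricted is just $\chi^{i\frown l}$ viewed as a supercharacter of $U_{n-1}$; I would verify this by checking the character value $\chi^{i\frown l}(u_\mu)$ given by Proposition~\ref{uncoloredsupercharacter} is unchanged, since $\dim$, $\nst$, and the intersection/difference counts only involve the arc $i\frown l$ and arcs of $\mu$, all supported on $[n-1]$ — the $n$th row and column of $u_\mu$ being zero contributes nothing. The substantive case is $l=n$. Here the single arc $i\frown n$ has its right endpoint on the node being removed. Using the supercharacter formula, $\chi^{i\frown n}(u_\mu)=(-1)^{|\{i\frown n\}\cap\mu|}q^{\dim(i\frown n)}(q-1)^{|\{i\frown n\}-\mu|}q^{-\nst^{i\frown n}_\mu}$ subject to the no-$2$-chain-through-$i\frown n$ condition, but since $\mu\vdash[n-1]$ has no arc with endpoint $n$, we have $\{i\frown n\}\cap\mu=\varnothing$, $\dim(i\frown n)=n-i-1$, and $\nst^{i\frown n}_\mu$ counts arcs $(j,k)\in\mu$ nested under $i\frown n$, i.e. with $i<j<k<n$. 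So the restricted value on $u_\mu$ is $q^{n-i-1}(q-1)q^{-(\#\text{arcs of }\mu\text{ nested in }(i,n))}$ when no arc of $\mu$ has left endpoint $i$ (else it is $0$). I would then match this against the claimed answer $t(\mathbbm{1}+\sum_{i<k<n}\chi^{i\frown k})$: evaluate $\chi^{i\frown k}(u_\mu)$ for each $i<k<n$ via Proposition~\ref{uncoloredsupercharacter} and $\mathbbm{1}(u_\mu)=1$, and check the two sides agree on every superclass of $U_{n-1}$, which suffices since supercharacters are constant on superclasses and superclass indicator functions span the space of superclass functions.

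The main obstacle will be this final matching step: showing that for every set partition $\mu\vdash[n-1]$,
\[
q^{n-i-1}(q-1)\,q^{-\nst^{i\frown n}_\mu}\,[\,\nexists\, i\frown j'\in\mu\,] \;=\; (q-1)\Big(1+\sum_{i<k<n}\chi^{i\frown k}(u_\mu)\Big),
\]
with the appropriate vanishing conventions. The right-hand side is a sum over $k$ of terms that are either $0$ (when the no-$2$-chain condition fails for arc $i\frown k$ against $\mu$, or when $\mu$ already uses $i$ as a left endpoint in a conflicting way) or $\pm q^{\dim(i\frown k) - \nst}(q-1)^{0\text{ or }1}$; collecting these is essentially a generating-function/telescoping identity in $q$ indexed by which nodes between $i$ and $n$ are ``available.'' Alternatively, and perhaps more cleanly, I would instead cite and directly translate the known restriction formulas for the Andr\'e--Yan colored supercharacter theory from \cite{branching rules}: there the restriction of a single-arc colored supercharacter $\chi^{i\xrightarrow{a}n}$ from $U_n$ to $U_{n-1}$ is already worked out, and summing over the color $a\in\mathbb{F}_q^\times$ (which is exactly how the uncolored supercharacter $\chi^{i\frown n}$ decomposes as a sum of colored ones, contributing the factor $t=q-1$) yields the stated formula after observing that the colored arcs $i\xrightarrow{a}k$ for $i<k<n$ collapse to $t\,\chi^{i\frown k}$ and the ``broken arc'' terms collapse to $t\,\mathbbm{1}$. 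Given that the proposition explicitly says the proof uses the formulas of \cite{branching rules}, I expect the intended argument is this translation, and the only real work is bookkeeping the passage from $\mathbb{F}_q^\times$-colored set partitions to uncolored ones — tracking how a factor of $t$ appears both from the color of the arc being restricted and is absorbed/not absorbed by the colors of the resulting arcs.
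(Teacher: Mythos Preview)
Your second, ``alternative'' approach is exactly what the paper does: write the uncolored arc supercharacter as $\chi^{i\frown l}=\sum_{a\in\mathbb{F}_q^\times}\chi^{i\overset{a}{\frown}l}$, apply the colored single-arc restriction formula from \cite[Theorem~4.5]{branching rules} term by term, and recollect the colored output into uncolored supercharacters (picking up the factor $t=q-1$ from the outer sum over $a$, while the inner sum over colors $b$ reconstitutes $\chi^{i\frown k}$). The whole proof is three lines.

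Your first approach---evaluating both sides on superclass representatives $u_\mu$ via Proposition~\ref{uncoloredsupercharacter} and matching---is a genuine alternative and would work, but it trades the dependence on \cite{branching rules} for the $q$-identity you correctly flag as the main obstacle. That identity is not hard (it telescopes once you organize the terms by whether $k$ is a right endpoint of $\mu$ and track the nesting count), but the paper sidesteps it entirely. What your direct route buys is self-containment; what the paper's route buys is brevity and a uniform mechanism that it immediately reuses for the tensor-product formula in Proposition~\ref{tensorarcs}.
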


\begin{proof}
By the formulas for restriction of colored arcs \cite[Theorem 4.5]{branching rules}, for $l \neq n$, we have
\[
\Res^{U_n}_{U_{n-1}}(\chi^{i \frown l}) = \sum_{a \in \mathbb{F}_q^\times} \Res^{U_n}_{U_{n-1}} (\chi^{i \overset{a}\frown l}) = \sum_{a \in \mathbb{F}_q^\times} \chi^{i \overset{a}\frown l} = \chi^{i \frown l},
\]
and for  $l = n$, we have
\[
\Res^{U_n}_{U_{n-1}}(\chi^{i \frown l}) = \sum_{a \in \mathbb{F}_q^\times} \Res^{U_n}_{U_{n-1}} (\chi^{i \overset{a}\frown l}) = \sum_{a \in \mathbb{F}_q^\times}\bigg( \mathbbm{1} + \displaystyle\sum_{\substack{i<k<l\\ b \in \mathbb{F}_q^\times} }\chi^{i \overset{b}\frown k}\bigg) = t\bigg(\mathbbm{1}+ \displaystyle\sum_{i<k<l} \chi^{i \frown k}\bigg) .
\] 
\end{proof}
\noindent Intuitively, restricting an arc corresponds to removing the last node and reattaching the arc in all possible ways.

We now use the tensor product to glue together the resulting restrictions.  For $1 \leq i < l$, define 
\[\chi^{i\frownx l} = t\bigg(\mathbbm{1}+\sum_{i<k<l} \chi^{i \frown k}\bigg) \quad \text{and} \quad \chi^{i\xfrown l} = t\bigg(\mathbbm{1}+\sum_{i<j<l} \chi^{j \frown l}\bigg).\]
Using the formulas in \cite{branching rules} for the colored supercharacter theory yields the following proposition.

\begin{proposition}\label{tensorarcs}
For $1\leq i < l \leq n$ and $1\leq j < k\leq n$ such that $(i,l) \neq (j,k)$,
\[  \chi^{i \frown l} \odot \chi^{j \frown k} = \left\{
     \begin{array}{ll}
       \chi^{\{i \frown l, j\frown k\}} & \text{if}\ k\neq l, i \neq j, \\
      \chi^{i \frown l} \odot \chi^{j \frownx k}  & \text{if}\ i< j <k = l,\\
      \chi^{i \frown l} \odot \chi^{j \xfrown k}  & \text{if}\ i= j <k < l.
     \end{array}
   \right.\]
\end{proposition}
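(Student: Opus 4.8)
The plan is to peel off Case~1, which needs nothing new, and to reduce Cases~2 and~3 to a single identity checked in the colored theory. Case~1 is immediate: when $i\neq j$ and $k\neq l$ the two arcs share neither a left nor a right endpoint, so $\{i\frown l,\,j\frown k\}$ is a genuine set partition of $[n]$ and the factorization of supercharacters into arcs~(\ref{arctensor}) already gives $\chi^{i\frown l}\odot\chi^{j\frown k}=\chi^{\{i\frown l,\,j\frown k\}}$. Case~3 is the mirror image of Case~2: the order-reversing automorphism of $U_n$ (sending node $a$ to $n+1-a$, hence reversing every arc) fixes $\mathbbm{1}$, carries $\chi^{\lambda}$ to the supercharacter of the node-reversed set partition, and converts the $\frownx$-type objects into $\xfrown$-type objects; together with the commutativity of $\odot$ this deduces Case~3 from Case~2 (or one simply reruns the argument below with left and right endpoints interchanged). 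So the whole proposition comes down to
\[
\chi^{i\frown l}\odot\chi^{j\frown l}\;=\;\chi^{i\frown l}\odot\chi^{j\frownx l}\qquad\text{for }1\le i<j<l.
\]

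To prove this I would descend to the Andr\'e--Yan colored supercharacter theory, exactly as in the proof of Proposition~\ref{restrictionarc}. Each uncolored arc supercharacter is the sum over its $\FF_q^\times$-colorings, $\chi^{i\frown l}=\sum_{a\in\FF_q^\times}\chi^{i\overset{a}{\frown} l}$, and the same holds for two-arc set partitions, so bilinearity of $\odot$ gives $\chi^{i\frown l}\odot\chi^{j\frown l}=\sum_{a,b\in\FF_q^\times}\chi^{i\overset{a}{\frown} l}\odot\chi^{j\overset{b}{\frown} l}$. I would then invoke the colored product formula from \cite{branching rules} for two arcs with a common right endpoint $l$: since $i\frown l$ already occupies node $l$, the shorter arc $j\frown l$ behaves as though $l$ were being restricted away from it and is replaced by $\mathbbm{1}$ together with the colored reattachments $j\overset{c}{\frown} m$ for $j<m<l$ --- the colored, node-$l$ incarnation of Proposition~\ref{restrictionarc}. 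Summing the outcome over the outer color $a$ restores the factor $\chi^{i\frown l}$; summing over the right-endpoint color $b$ contributes the single scalar $t=q-1$; and summing each reattached colored arc $j\overset{c}{\frown} m$ over $c$ collapses it to $\chi^{j\frown m}$. Assembling the pieces yields $\chi^{i\frown l}\odot t\bigl(\mathbbm{1}+\sum_{j<m<l}\chi^{j\frown m}\bigr)=\chi^{i\frown l}\odot\chi^{j\frownx l}$.

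The delicate point, and the step I would check most carefully, is the color bookkeeping just above: the two color summations must together produce exactly one factor of $q-1$, not $(q-1)^{2}$ and not a spurious power of $q$, which is precisely why the $t$ in the definition of $\chi^{j\frownx l}$ stands in front of the whole parenthesis. A convenient guard rail is evaluation at the identity: $\chi^{i\frown l}(1)\,\chi^{j\frown l}(1)=q^{2l-i-j-2}t^{2}$, while telescoping the geometric series gives $\chi^{j\frownx l}(1)=t\bigl(1+\sum_{m=j+1}^{l-1}q^{m-j-1}t\bigr)=tq^{l-j-1}$, so $\chi^{i\frown l}(1)\,\chi^{j\frownx l}(1)=q^{2l-i-j-2}t^{2}$ matches. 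Should the colored product formula not be convenient to cite in the exact form needed, one can instead verify the identity pointwise on each superclass representative $u_\mu$ using Proposition~\ref{uncoloredsupercharacter}: where $\chi^{i\frown l}(u_\mu)=0$ both sides vanish, and where it is nonzero one cancels it and checks the resulting identity for $\chi^{j\frown l}(u_\mu)$, which holds because the support condition forced by $\chi^{i\frown l}(u_\mu)\neq 0$ excludes exactly the configurations of $\mu$ that would otherwise break it. I would expect the colored route to be the shorter of the two.
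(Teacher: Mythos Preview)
Your proposal is correct and follows essentially the same route as the paper: expand each uncolored arc as a sum over $\FF_q^\times$-colorings and invoke the colored tensor formulas of \cite[Lemma~4.6]{branching rules}. The paper treats all three cases uniformly this way, including Case~1 (where the colored lemma gives $\chi^{i\overset{a}{\frown}l}\odot\chi^{j\overset{b}{\frown}k}=\chi^{\{i\overset{a}{\frown}l,\,j\overset{b}{\frown}k\}}$ and one re-sums) and Case~3 (a direct parallel computation rather than the node-reversing symmetry you suggest); your shortcut for Case~1 via~(\ref{arctensor}) is valid and slightly cleaner. Your color-bookkeeping worry is resolved by the colored lemma itself: it asserts $\chi^{i\overset{a}{\frown}l}\odot\chi^{j\overset{b}{\frown}l}=\chi^{i\overset{a}{\frown}l}\odot\bigl(\mathbbm{1}+\sum_{j<m<l,\,c\in\FF_q^\times}\chi^{j\overset{c}{\frown}m}\bigr)$ independently of $b$, so the $b$-sum contributes exactly the single factor $t$ you expected and no separate degree check is needed.
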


\begin{proof} Let $1\leq i < l \leq n$ and $1\leq j < k\leq n$ such that $(i,l) \neq (j,k)$.  For $k \neq l$ and $i \neq j$, the tensor product $\chi^{i \frown l} \odot \chi^{j\frown k}$ is given by
\[\chi^{i \frown l} \odot \chi^{j\frown k} =  \sum_{a \in \mathbb{F}_q^\times} \sum_{b \in \mathbb{F}_q^\times} \chi^{i \overset{a}\frown l} \odot \chi^{j \overset{b}\frown k} =  \sum_{a \in \mathbb{F}_q^\times} \sum_{b \in \mathbb{F}_q^\times} \chi^{\{i \overset{a}\frown l, j \overset{b}\frown k\}} = \chi^{\{i \frown l , j \frown k\}},\] 
for $i<j<k=l$, we have
\[
\chi^{i\frown l} \odot \chi^{j \frown l} = \sum_{a \in \mathbb{F}_q^\times} \sum_{b \in \mathbb{F}_q^\times} \chi^{i \overset{a}\frown l} \odot \chi^{j \overset{b}\frown l}
= \sum_{a \in \mathbb{F}_q^\times} \sum_{b \in \mathbb{F}_q^\times} \chi^{i \overset{a}\frown l} \odot \bigg(\mathbbm{1} + \sum_{\substack{j<k<l\\ c \in \mathbb{F}_q^\times} }\chi^{j \overset{c}\frown k}\bigg)
=\chi^{i \frown l} \odot \chi^{j\frownx l},
\]
and for $i= j < k<l$, we obtain
\[\chi^{i\frown l} \odot \chi^{i \frown k} = \sum_{a \in \mathbb{F}_q^\times} \sum_{b \in \mathbb{F}_q^\times} \chi^{i \overset{a}\frown l} \odot \chi^{i \overset{b}\frown k}
= \sum_{a \in \mathbb{F}_q^\times} \sum_{b \in \mathbb{F}_q^\times} \chi^{i \overset{a}\frown l} \odot \bigg(\mathbbm{1} + \sum_{\substack{i<j<k\\ c \in \mathbb{F}_q^\times} }\chi^{j \overset{c}\frown k}\bigg)
=\chi^{i \frown l} \odot \chi^{i\xfrown k}
\] 
by the tensor formulas for colored arcs \cite[Lemma 4.6]{branching rules}.
\end{proof}
\noindent Thus the tensor product provides a rule for resolving conflicting arcs that have the same right endpoint by removing the smaller arc and reattaching it in all possible ways. 

Next we work toward providing a combinatorial description of the coefficients in the tensor product based on  statistics of set partitions and seashell inspired diagrams.
\begin{defn}\label{simpleshell}
Let $s^\prime \in \{ s, s+1\}$ for $s \in \mathbb{Z}_{\geq 1}$ and $1 \leq i \leq l \leq n$.  A \textit{shell} of \textit{size} $n$ and \textit{width} $l-i$ is a set of arcs on $n$ nodes of the form
\[ \bigcup_{r=1}^s \{i_r \frown l_r\} \cup \bigcup_{r=1}^{s^\prime-1} \{i_r \smile l_{r+1}\} \]
where $i = i_1 < \cdots < i_s \leq l_{s^\prime} < \cdots < l_1 = l$.
\end{defn}

\noindent For example, some shells of size $6$ and width $6-2$ are
{\[\begin{tikzpicture}
\draw[line width=.2mm, color=white] (-.5,0) to[in=-110, out=-70, circle, looseness=1.4] (1,0);
 \draw[fill=black] (-1,0) circle (.07);
 \draw[fill=black] (-.5,0) circle (.07);
\draw[fill=black] (.0,0) circle (.07);
\draw[fill=black] (.5,0) circle (.07);
\draw[fill=black] (1,0) circle (.07);
\draw[fill=black] (1.5,0) circle (.07);
\draw[line width=.2mm] (-.5,0) to[in=110, out=70, circle, looseness=1.35] (1.5,0);
\end{tikzpicture}\qquad\qquad\qquad\qquad\qquad\begin{tikzpicture}
 \draw[fill=black] (-1,0) circle (.07);
 \draw[fill=black] (-.5,0) circle (.07);
\draw[fill=black] (.0,0) circle (.07);
\draw[fill=black] (.5,0) circle (.07);
\draw[fill=black] (1,0) circle (.07);
\draw[fill=black] (1.5,0) circle (.07);
\draw[line width=.2mm] (-.5,0) to[in=110, out=70, circle, looseness=1.35] (1.5,0);
\draw[line width=.2mm] (-.5,0) to[in=-110, out=-70, circle, looseness=1.4] (1,0);
\end{tikzpicture}\vspace{-.3in}
\]
\[\quad \qquad\{ 2 \frown 6\},\qquad\qquad\qquad\qquad \qquad\{ 2 \frown 6\}\cup\{2 \smile 5\}\]
\[ \begin{tikzpicture}
 \draw[fill=black] (-1,0) circle (.07);
 \draw[fill=black] (-.5,0) circle (.07);
\draw[fill=black] (.0,0) circle (.07);
\draw[fill=black] (.5,0) circle (.07);
\draw[fill=black] (1,0) circle (.07);
\draw[fill=black] (1.5,0) circle (.07);
\draw[line width=.2mm] (0,0) to[in=110, out=70, circle, looseness=1.4] (1,0);
\draw[line width=.2mm] (-.5,0) to[in=110, out=70, circle, looseness=1.35] (1.5,0);
\draw[line width=.2mm] (-.5,0) to[in=-110, out=-70, circle, looseness=1.4] (1,0);
\end{tikzpicture}\qquad\qquad\qquad\qquad\qquad\begin{tikzpicture}
 \draw[fill=black] (-1,0) circle (.07);
 \draw[fill=black] (-.5,0) circle (.07);
\draw[fill=black] (.0,0) circle (.07);
\draw[fill=black] (.5,0) circle (.07);
\draw[fill=black] (1,0) circle (.07);
\draw[fill=black] (1.5,0) circle (.07);
\draw[line width=.2mm] (0,0) to[in=-110, out=-70, circle, looseness=1.6] (.5,0);
\draw[line width=.2mm] (0,0) to[in=110, out=70, circle, looseness=1.4] (1,0);
\draw[line width=.2mm] (-.5,0) to[in=110, out=70, circle, looseness=1.35] (1.5,0);
\draw[line width=.2mm] (-.5,0) to[in=-110, out=-70, circle, looseness=1.4] (1,0);
\end{tikzpicture}\]\vspace{-.3in}
\[\qquad\quad\{2 \frown 6, 3 \frown 5\} \cup \{2\smile 5\}, \qquad\qquad \{2 \frown 6, 3\frown 5\} \cup \{ 2 \smile 5 , 3\smile 4\}.\]}

A \textit{whorl} is pair of consecutive arcs $(i \frown l, i \smile j)$ in a shell corresponding to a $360^\circ$ rotation in the spiral configuration.  Following the notation of Definition \ref{simpleshell}, the number of whorls of a shell is
\[\Big\lceil \frac{s+s^\prime-1}{2} \Big\rceil \]
as each arc is half a whorl.  We use the convention that whorls are counted from the right endpoint $l$ spiraling inward.  For instance, in the shell below we count the two whorls $(1\frown 5, 1\smile 4)$ and $(2\frown 4, 2\smile 3)$ as follows
\[\begin{tikzpicture}
\draw[line width=.3mm, style=dashed] (-2,0)--(3.75,0);
\draw[line width=.3mm, style=dashed] (-1,0) to[in=110, out=70, circle, looseness=1.4] (2.5,0);
\draw[->][line width=.3mm, style=dashed] (-1,0) to[in=-110, out=-70, circle, looseness=1.4] (1.75,0);
 \draw[fill=black] (-.75,0) circle (.07);
\draw[fill=black] (0,0) circle (.07);
\draw[fill=black] (.75,0) circle (.07);
\draw[fill=black] (1.5,0) circle (.07);
\draw[fill=black] (2.25,0) circle (.07);
\draw[line width=.3mm] (0,0) to[in=-110, out=-70, circle, looseness=1.6] (.75,0);
\draw[line width=.3mm] (0,0) to[in=110, out=70, circle, looseness=1.4] (1.5,0);
\draw[line width=.3mm] (-.75,0) to[in=110, out=70, circle, looseness=1.35] (2.25,0);
\draw[line width=.3mm] (-.75,0) to[in=-110, out=-70, circle, looseness=1.4] (1.5,0);
 \draw[fill=white, color=white] (1.9,0) circle (.1);
\node at (1.9,0) {$1$};
 \draw[fill=white, color=white] (-1.2,0) circle (.17);
\node at (-1.2,0) {$\frac{1}{2}$};
\node at (.75,1.75) {$\frac{1}{4}$};
\node at (.75/2,-1.5) {$\frac{3}{4}$};
 \draw[fill=white, color=white] (1,0) circle (.17);
\node at (1,0) {$2$};
\node at (4,0) {.};
\end{tikzpicture}\]
If the whorls of a shell are given by $(i_1 \frown l_1, i_1 \smile l_2), \ldots, (i_s \frown l_s, i_s \smile l_{s+1})$ we say the pair $(i_1 \frown l_1, i_1 \smile l_2)$ is the \textit{outer whorl} and the other whorls are \textit{inner whorls}.

We can use shells to determine the partitions that appear in the restriction of a supercharacter.  More precisely, drawing the arcs of a partition $\mu \vdash [n-1]$ below the nodes, and identifying the nodes with the leftmost $n-1$ nodes of a partition $\lambda\vdash [n]$ allows us to characterize the partitions with nonzero coefficients in the restriction of $\lambda$ as the partitions $\mu \vdash [n-1]$ such that the symmetric difference between $\lambda$ and $\mu$ form a shell.

\begin{defn} For $\lambda \vdash [n]$ and $1 \leq i < l \leq n$ with $i \notin le(\lambda)$, the \textit{shell set} $C^{\lambda, i \frown l}$ of $\lambda \cup \{i\frown l\}$ is
\begin{equation*}
C^{\lambda, i \frown l} = \big\{ \mu \vdash [n-1] \mid \big((\lambda \cup \{i \frown l\})-\mu\big) \cup \big(\mu-(\lambda \cup \{ i \frown l\})\big)\ \text{is a shell of width}\ l-i\big\}.
\end{equation*}
\end{defn}
\noindent This corresponds to all the ways to reattach the arc $i \frown l$ and ``straighten'' the resulting diagram by resolving all the conflicting arcs that share the same right endpoint.  

\begin{example}\label{seashells}
Suppose $\lambda = \{1 \frown 4, 3\frown 5\} \vdash [6]$.  Consequently, we have \[\lambda\,\cup\, \{2\frown 6\} =
\begin{tikzpicture}
\draw[fill=black] (-1,0) circle (.07);
 \draw[fill=black] (-.5,0) circle (.07);
\draw[fill=black] (.0,0) circle (.07);
\draw[fill=black] (.5,0) circle (.07);
\draw[fill=black] (1,0) circle (.07);
\draw[fill=black] (1.5,0) circle (.07);
\draw[line width=.2mm] (-1,0) to[in=110, out=70, circle, looseness=1.4] (.5,0);
\draw[line width=.2mm] (-.5,0) to[in=110, out=70, circle, looseness=1.35] (1.5,0);
\draw[line width=.2mm] (0,0) to[in=110, out=70, circle, looseness=1.4] (1,0);
\end{tikzpicture}
\]
and
\[C^{\lambda, 2\frown 6} =  \left\{
\begin{tikzpicture}
\draw[fill=black] (-1,0) circle (.07);
 \draw[fill=black] (-.5,0) circle (.07);
\draw[fill=black] (.0,0) circle (.07);
\draw[fill=black] (.5,0) circle (.07);
\draw[fill=black] (1,0) circle (.07);
\draw[line width=.2mm] (-1,0) to[in=110, out=70, circle, looseness=1.4] (.5,0);
\draw[line width=.2mm] (0,0) to[in=110, out=70, circle, looseness=1.4] (1,0);
\end{tikzpicture} \quad,\quad
\begin{tikzpicture}
\draw[fill=black] (-1,0) circle (.07);
 \draw[fill=black] (-.5,0) circle (.07);
\draw[fill=black] (.0,0) circle (.07);
\draw[fill=black] (.5,0) circle (.07);
\draw[fill=black] (1,0) circle (.07);
\draw[line width=.2mm] (-1,0) to[in=110, out=70, circle, looseness=1.4] (.5,0);
\draw[line width=.2mm] (0,0) to[in=110, out=70, circle, looseness=1.4] (1,0);
\draw[line width=.2mm] (-.5,0) to[in=110, out=70, circle, looseness=1.7] (0,0);
\end{tikzpicture} \quad,\quad
\begin{tikzpicture}
\draw[fill=black] (-1,0) circle (.07);
 \draw[fill=black] (-.5,0) circle (.07);
\draw[fill=black] (.0,0) circle (.07);
\draw[fill=black] (.5,0) circle (.07);
\draw[fill=black] (1,0) circle (.07);
\draw[line width=.2mm] (-1,0) to[in=110, out=70, circle, looseness=1.4] (.5,0);
\draw[line width=.2mm] (-.5,0) to[in=110, out=70, circle, looseness=1.4] (1,0);
\end{tikzpicture}
\right\}.
\]
The seashells created by the symmetric differences between $\lambda\cup \{2 \frown 6\}$ and $\mu \in C^{\lambda, 2 \frown 6}$ are shown as solid lines
\[\begin{tikzpicture}
\draw[fill=black] (-1,0) circle (.07);
 \draw[fill=black] (-.5,0) circle (.07);
\draw[fill=black] (.0,0) circle (.07);
\draw[fill=black] (.5,0) circle (.07);
\draw[fill=black] (1,0) circle (.07);
\draw[fill=black] (1.5,0) circle (.07);
\draw[line width=.2mm, style=dashed] (-1,0) to[in=110, out=70, circle, looseness=1.4] (.5,0);
\draw[line width=.2mm, style=dashed] (-1,0) to[in=-110, out=-70, circle, looseness=1.4] (.5,0);
\draw[line width=.2mm] (-.5,0) to[in=110, out=70, circle, looseness=1.35] (1.5,0);
\draw[line width=.2mm, style=dashed] (0,0) to[in=110, out=70, circle, looseness=1.4] (1,0);
\draw[line width=.2mm, style=dashed] (0,0) to[in=-110, out=-70, circle, looseness=1.4] (1,0);
\draw[fill=black] (3,0) circle (.07);
 \draw[fill=black] (3.5,0) circle (.07);
\draw[fill=black] (4,0) circle (.07);
\draw[fill=black] (4.5,0) circle (.07);
\draw[fill=black] (5,0) circle (.07);
\draw[fill=black] (5.5,0) circle (.07);
\draw[line width=.2mm, style=dashed] (3,0) to[in=110, out=70, circle, looseness=1.4] (4.5,0);
\draw[line width=.2mm, style=dashed] (3,0) to[in=-110, out=-70, circle, looseness=1.4] (4.5,0);
\draw[line width=.2mm] (3.5,0) to[in=110, out=70, circle, looseness=1.35] (5.5,0);
\draw[line width=.2mm] (3.5,0) to[in=-110, out=-70, circle, looseness=1.7] (4,0);
\draw[line width=.2mm, style=dashed] (4,0) to[in=110, out=70, circle, looseness=1.4] (5,0);
\draw[line width=.2mm, style=dashed] (4,0) to[in=-110, out=-70, circle, looseness=1.4] (5,0);
\draw[fill=black] (7,0) circle (.07);
 \draw[fill=black] (7.5,0) circle (.07);
\draw[fill=black] (8,0) circle (.07);
\draw[fill=black] (8.5,0) circle (.07);
\draw[fill=black] (9,0) circle (.07);
\draw[fill=black] (9.5,0) circle (.07);
\draw[line width=.2mm, style=dashed] (7,0) to[in=110, out=70, circle, looseness=1.4] (8.5,0);
\draw[line width=.2mm, style=dashed] (7,0) to[in=-110, out=-70, circle, looseness=1.4] (8.5,0);
\draw[line width=.2mm] (7.5,0) to[in=110, out=70, circle, looseness=1.35] (9.5,0);
\draw[line width=.2mm] (7.5,0) to[in=-110, out=-70, circle, looseness=1.4] (9,0);
\draw[line width=.2mm] (8,0) to[in=110, out=70, circle, looseness=1.4] (9,0);
\end{tikzpicture}\]
while the arcs in $\lambda \cap \mu$ are dashed.

\end{example}

It will be of interest to examine the shell sets $C^{\lambda, i \frown l}$ by considering the right endpoints $re(\lambda)$.  

\begin{defn}\label{replace} For each $j \frown k \in \lambda$ with $i < j< k < l$ define $\lambda|_{j \mapsto i}$ as the set partition obtained by replacing $j \frown k$ with $i \frown k$ and leaving everything else in $\lambda$ the same.  That is, \[ \lambda|_{j \mapsto i} = \lambda\,\cup\, \{i \frown k\} - \{j \frown k\}.\]
\end{defn}
With this notation we can describe the shell set $C^{\lambda, i \frown l}$ as a union of shells with half a whorl, shells with one whorl, and shells with greater than one whorl.

\begin{lemma}\label{shellset}
For $\lambda \vdash [n]$, and $i \notin le(\lambda)$, the shell set is given by
\[C^{\lambda, i \frown l} = \{ \lambda\} \cup \{\lambda \,\cup\, \{i \frown k\} \mid i<k<l , k \notin re(\lambda)\} \cup \{ \mu \in C^{\lambda\mid_{j \mapsto i}, j \frown k} \mid i<j<k<l, j \frown k \in \lambda\}. \]
\end{lemma}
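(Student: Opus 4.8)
The plan is to read off $C^{\lambda,i\frown l}$ from the combinatorics of the shell attached to each candidate $\mu$. Write $A\triangle B$ for the symmetric difference $(A-B)\cup(B-A)$ and $\widehat\lambda=\lambda\cup\{i\frown l\}$, so that by definition $\mu\in C^{\lambda,i\frown l}$ exactly when $S_\mu:=\widehat\lambda\triangle\mu$ is a shell of width $l-i$. I would first record two facts valid for every $\mu\in C^{\lambda,i\frown l}$: a shell of width $l-i$ is nonempty (since $s\ge1$ in Definition~\ref{simpleshell}) and contains the arc $i\frown l$ (since $i_1=i$, $l_1=l$), so $i\frown l\in S_\mu$, hence $i\frown l\notin\mu$; together with $i\frown l\notin\lambda$ (as $i\notin le(\lambda)$) this yields the splitting $S_\mu=\{i\frown l\}\sqcup(\lambda\triangle\mu)$. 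The proof then comes down to sorting these shells by their number of arcs --- one arc, two arcs, or at least three --- which in the parameters of Definition~\ref{simpleshell} means $(s,s')=(1,1)$, $(s,s')=(1,2)$, and $s\ge2$; I will match these three families with the three sets on the right-hand side, in order.

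The one- and two-arc cases are quick. A one-arc shell $S_\mu=\{i\frown l\}$ forces $\lambda\triangle\mu=\varnothing$, i.e.\ $\mu=\lambda$, and conversely $\widehat\lambda\triangle\lambda=\{i\frown l\}$ is a width-$(l-i)$ shell; this gives $\{\lambda\}$. A two-arc shell must be $\{i\frown l,\,i\smile k\}$ with $i<k<l$, so $\lambda\triangle\mu=\{i\frown k\}$, which with $i\notin le(\lambda)$ forces $\mu=\lambda\cup\{i\frown k\}$; this is a genuine set partition precisely when $k\notin re(\lambda)$, and conversely every such $\mu$ produces the two-arc shell $\{i\frown l,\,i\frown k\}$. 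So these cases account for $\{\lambda\cup\{i\frown k\}\mid i<k<l,\ k\notin re(\lambda)\}$.

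The main case is $s\ge2$, and I expect its heart to be the following claim. Writing the arcs of $S_\mu$ as $\{i_r\frown l_r\}_{r=1}^{s}\cup\{i_r\smile l_{r+1}\}_{r=1}^{s'-1}$ and putting $j:=i_2$, $k:=l_2$ (so $i<j<k<l$), I claim $j\frown k\in\lambda$. This is a conflict argument: the lower arc $i\smile k$ of the outer whorl lies in $S_\mu$, and since $i\notin le(\lambda)$ it must lie in $\mu$; so if $j\frown k$ also lay in $\mu$, then $\mu$ would contain two arcs ending at $k$, which is impossible for a set partition. Hence $j\frown k\in\lambda\setminus\mu$. Granting this, I would pass to $\lambda':=\lambda|_{j\mapsto i}$ (Definition~\ref{replace}), noting $\lambda'\cup\{j\frown k\}=\lambda\cup\{i\frown k\}$, and verify by tracking the membership of the two arcs $i\frown l$ and $i\frown k$ that
\[ (\lambda'\cup\{j\frown k\})\triangle\mu \;=\; (\lambda\cup\{i\frown k\})\triangle\mu \;=\; S_\mu\setminus\{i\frown l,\,i\smile k\}, \]
while stripping the outer whorl $(i\frown l,\,i\smile k)$ off $S_\mu$ and reindexing exhibits $S_\mu\setminus\{i\frown l,\,i\smile k\}$ as a shell of width $k-j$ (parameters $(s-1,s'-1)$). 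Thus $\mu\in C^{\lambda|_{j\mapsto i},\,j\frown k}$ with $j\frown k\in\lambda$ and $i<j<k<l$, placing $\mu$ in the third set.

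For the reverse inclusion in this case I would run the same identities backwards: given $j\frown k\in\lambda$ with $i<j<k<l$ and $\mu\in C^{\lambda|_{j\mapsto i},\,j\frown k}$, the width-$(k-j)$ shell $T:=(\lambda\cup\{i\frown k\})\triangle\mu$ has all of its endpoints in $[j,k]$, so it contains neither $i\frown k$ nor $i\frown l$; this pins down $i\frown k\in\mu$ and $i\frown l\notin\mu$, and the same bookkeeping gives $\widehat\lambda\triangle\mu=T\cup\{i\smile k\}\cup\{i\frown l\}$. Prepending the outer whorl $(i\frown l,\,i\smile k)$ to $T$ produces a shell of width $l-i$, the only inequalities needed for the chain condition of Definition~\ref{simpleshell} being $i<j$ and $k<l$. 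Combined with the reverse inclusions already noted in the first two cases, this gives the set equality. The genuine obstacle here is only the conflict argument identifying $i_2\frown l_2$ as an arc of $\lambda$; the rest is symmetric-difference bookkeeping together with the routine fact that removing or attaching an outer whorl of a shell again yields a shell, with a little care for the degenerate endpoint possibility $i_s=l_{s'}$ permitted by Definition~\ref{simpleshell} (which does not change the arc counts used in the case split).
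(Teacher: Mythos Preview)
Your proof is correct and follows essentially the same route as the paper's. Both arguments trisect $C^{\lambda,i\frown l}$ according to whether the shell $(\lambda\cup\{i\frown l\})\triangle\mu$ has one arc, two arcs, or at least three, and both identify the third family with $\bigcup_{j\frown k\in\lambda,\,i<j<k<l} C^{\lambda|_{j\mapsto i},\,j\frown k}$ via the ``peel off / reattach the outer whorl'' correspondence $S_\mu \leftrightarrow S_\mu\setminus\{i\frown l,\,i\smile k\}$. The paper simply asserts the biconditional between the two explicit shell descriptions, whereas you supply the justification it omits---in particular your conflict argument that $i\smile k\in\mu$ forces $j\frown k\in\lambda\setminus\mu$, which is exactly what is needed to make sense of $\lambda|_{j\mapsto i}$ and is only implicit in the paper.
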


\begin{proof}
By definition $\{ \lambda,\lambda \cup \{i \frown k\} \mid i<k<l , k \notin re(\lambda)\}\subseteq C^{\lambda, i \frown l}$, so it suffices to show that
\[ C^{\lambda, i \frown l} \backslash  \{ \lambda,\lambda \cup \{i \frown k\} \mid i<k<l , k \notin re(\lambda)\} = \{ \mu \in C^{\lambda\mid_{j \mapsto i}, j \frown k} \mid i<j<k<l, j \frown k \in \lambda\}.\]
 There exist $j = j_1 < \cdots <j_{s}< k_{s^\prime}< \cdots < k_1 = k$ with $s^\prime \in \{s, s+1\}$ such that 
\begin{align*}
(\lambda|_{j \mapsto i} \cup \{j \frown k\})-\mu &= \{ j_1 \frown k_1, j_2 \frown k_2, \ldots,  j_{s} \frown k_s\},\ \text{and}\\
 \mu - (\lambda|_{j \mapsto i} \cup \{j \frown k\}) &= \{ j_1 \frown k_2, j_2 \frown k_3, \ldots, j_{s^\prime -1} \frown k_{s^\prime}\}
 \end{align*}
if and only if there exist $i<j =j_1 < \cdots <j_{s}< k_{s^\prime}< \cdots < k_1 = k<l$ such that 
\begin{align*}
(\lambda \cup \{i \frown l\})-\mu &= \{i \frown l, j_1 \frown k_1, \ldots,  j_{s} \frown k_{s}\},\ \text{and}\\
 \mu - (\lambda \cup \{i \frown l\}) &= \{ i \frown k,  j_1 \frown k_2, \ldots, j_{s^\prime-1} \frown k_{s^\prime}\}.
 \end{align*}
Thus $\mu \in C^{\lambda|_{j \mapsto i}, j \frown k}$ for some $i<j<k<l, j \frown k \in \lambda $ if and only if $\mu \in C^{\lambda, i \frown l} \backslash \{ \lambda,\lambda \cup \{i \frown k\} \mid i<k<l , k \notin re(\lambda)\} $ as desired.
\end{proof}
\vspace{1em}
\begin{defn}
For each $\mu \in C^{\lambda, i \frown l}$ define the \textit{shell coefficient} of $\lambda \cup \{i \frown l\}$ and $\mu$ as
\[c_\mu^{\lambda, i\frown l}=\displaystyle \frac{ t^{|(\lambda \cup \{i \frown l\})-\mu|} q^{\crs((\lambda \cup \{i \frown l\}) \cap \mu, (\lambda \cup \{i \frown l\})-\mu) }}{q^{\crs((\lambda \cup \{i \frown l\}) \cap \mu,\mu-(\lambda \cup \{i \frown l\}))}} \] 
where $t=q-1$ and $\crs(\cdot, \cdot)$ is the crossing number of two set partitions given in Section \ref{set partition}.
\end{defn}

\noindent We can associate each shell coefficient $c^{\lambda, i \frown l}_\mu$ to the shell created by the symmetric difference of $\lambda \cup \{i \frown l\}$ and $\mu$.  The next lemma shows the shell coefficient is the product of the shell coefficient of the outer whorl with the shell coefficient of the inner whorls.

\begin{lemma}\label{coefficient}
Let $\lambda \vdash [n]$, $i \notin le(\lambda)$, and $h \frown l, j \frown k \in \lambda$ with $1 \leq h < i<j <k < l \leq n$.  If $\mu \in C^{\lambda\mid_{j \mapsto i}, j \frown k}$ then
\[c^{\lambda, i \frown l}_\mu = c^{\lambda, i \frown l}_{\lambda \,\cup \{ i \frown k\}} c^{\lambda  \mid_{j \mapsto i}, j \frown k}_\mu. \]
\end{lemma}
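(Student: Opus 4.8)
The plan is to unfold each of the three shell coefficients straight from the definition, substitute the explicit description of the shell attached to $\mu$ that is produced inside the proof of Lemma~\ref{shellset}, and thereby reduce the claimed factorization to a short identity of crossing numbers, each term of which is visibly zero.

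First I would fix notation. Since $j\frown k\in\lambda$, the crucial bookkeeping fact is that $\lambda|_{j\mapsto i}\cup\{j\frown k\}=\lambda\cup\{i\frown k\}$, so $c^{\lambda|_{j\mapsto i},\,j\frown k}_\mu$ measures $\mu$ against $\lambda\cup\{i\frown k\}$, while $c^{\lambda,\,i\frown l}_\mu$ measures $\mu$ against $\lambda\cup\{i\frown l\}$, and the outer coefficient $c^{\lambda,\,i\frown l}_{\lambda\cup\{i\frown k\}}$ measures $\lambda\cup\{i\frown k\}$ against $\lambda\cup\{i\frown l\}$. Applying the proof of Lemma~\ref{shellset} to $\mu\in C^{\lambda|_{j\mapsto i},\,j\frown k}$ yields integers $i<j=j_1<\dots<j_s<k_{s'}<\dots<k_1=k<l$ with $s'\in\{s,s+1\}$; setting $D=\{j_1\frown k_1,\dots,j_s\frown k_s\}$ and $E=\{j_1\frown k_2,\dots,j_{s'-1}\frown k_{s'}\}$, one reads off
\[\begin{aligned}
(\lambda\cup\{i\frown k\})-\mu&=D, & \mu-(\lambda\cup\{i\frown k\})&=E,\\
(\lambda\cup\{i\frown l\})-\mu&=\{i\frown l\}\cup D, & \mu-(\lambda\cup\{i\frown l\})&=\{i\frown k\}\cup E.
\end{aligned}\]
Because $i\notin le(\lambda)$, and $i\frown l\notin\mu$ while $i\frown k\in\mu$, these also give $\lambda-\mu=D$, $\lambda\cap\mu=(\lambda\cup\{i\frown l\})\cap\mu=:A$, and $(\lambda\cup\{i\frown k\})\cap\mu=A\cup\{i\frown k\}$; moreover every arc-set union written below is disjoint, which is all that is needed to apply~(\ref{crossingidentity}).

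Now I would simply expand. The power of $t$ in $c^{\lambda,\,i\frown l}_\mu$ equals $t^{1+|D|}$, which factors as $t\cdot t^{|D|}$, matching the $t$-parts of $c^{\lambda,\,i\frown l}_{\lambda\cup\{i\frown k\}}$ and $c^{\lambda|_{j\mapsto i},\,j\frown k}_\mu$ respectively. For the power of $q$ I would use the additivity of $\crs$ in~(\ref{crossingidentity}) to split every crossing number occurring in the three coefficients along the decompositions $\lambda=A\cup D$, $\{i\frown l\}\cup D$, $\{i\frown k\}\cup E$, and $A\cup\{i\frown k\}$. All contributions of the form $\crs(A,\,\cdot\,)$ then cancel between the two sides of the asserted identity, and what is left to verify is exactly
\[\crs(D,\{i\frown l\})-\crs(D,\{i\frown k\})+\crs(\{i\frown k\},D)-\crs(\{i\frown k\},E)=0.\]

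The only substantive content — and the step I would write out most carefully, since it is where an index or sign slip is most likely — is this reduction together with the observation that each of the four crossing numbers above is individually $0$. That last fact is immediate: every arc of $D\cup E$ has left endpoint among the $j_r$, all strictly larger than $i$, and right endpoint among the $k_r$, all at most $k_1=k<l$; hence both $i\frown l$ and $i\frown k$ nest over all of $D\cup E$ and cross no arc there, in either order. Combining the matching $t$-exponents with the now-equal $q$-exponents gives $c^{\lambda,\,i\frown l}_\mu=c^{\lambda,\,i\frown l}_{\lambda\cup\{i\frown k\}}\,c^{\lambda|_{j\mapsto i},\,j\frown k}_\mu$. (The hypothesis $h\frown l\in\lambda$ is not actually used in this computation.)
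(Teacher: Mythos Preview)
Your argument is correct and is essentially the same approach as the paper's: both expand the three coefficients via the additivity identity~(\ref{crossingidentity}) and then use that every arc of the inner shell is nested under $i\frown k$ and $i\frown l$ to kill the residual crossing terms. The only organisational difference is that the paper transforms $c^{\lambda,i\frown l}_\mu$ through a chain of equalities---using the set identities $(\lambda\cup\{i\frown l\})\cap\mu=\lambda\cap\mu$ and $\lambda-\mu=(\lambda|_{j\mapsto i}\cup\{j\frown k\})-\mu$ directly, and phrasing the nesting fact as ``any arc in $\lambda$ that crosses $i\frown k$ or $i\frown l$ must lie in $\mu$''---whereas you expand both sides separately and isolate the four vanishing crossing numbers explicitly; your version is arguably cleaner in that it makes visible the two terms $\crs(\{i\frown k\},D)$ and $\crs(\{i\frown k\},E)$ that the paper passes over silently when replacing $\lambda\cap\mu$ by $(\lambda|_{j\mapsto i}\cup\{j\frown k\})\cap\mu$. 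Your closing remark that $h\frown l\in\lambda$ is unused is also correct.
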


\begin{proof}
Let $\mu \in C^{\lambda\mid_{j \mapsto i}, j \frown k}$.  By construction $i \notin le(\lambda|_{j \mapsto i})$, so $i \frown l \notin \mu$. Thus we have
$$(\lambda \cup \{i \frown l\}) -\mu =  \{ i \frown l\} \cup (\lambda-\mu),$$
hence
$$(\lambda \cup \{ i \frown l\}) \cap \mu = \lambda \cap \mu.$$
Substituting this and applying the crossing number equation (\ref{crossingidentity}), it follows that
\begin{eqnarray*}
c_\mu^{\lambda, i\frown l}&=&\displaystyle \frac{ t^{|(\lambda \cup \{i \frown l\})-\mu|} q^{\crs((\lambda \cup \{i \frown l\}) \cap \mu, (\lambda \cup \{i \frown l\})-\mu) }}{q^{\crs((\lambda \cup \{i \frown l\}) \cap \mu,\mu-(\lambda \cup \{i \frown l\}))}}\\ 
&=& \frac{t^{|\{i \frown l\} \cup (\lambda -\mu)|} q^{\crs(\lambda \cap \mu, \{i \frown l\} \cup (\lambda-\mu))} }{q^{\crs(\lambda \cap \mu, \mu - (\lambda \,\cup \{ i \frown l\}))}}\\ 
&=& \frac{t^{|i \frown l|} q^{\crs(\lambda \cap \mu, i \frown l)} t^{|\lambda -\mu|}q^{\crs(\lambda \cap \mu,\lambda-\mu)}}{q^{\crs(\lambda \cap \mu, \mu - (\lambda \,\cup \{ i \frown l\}))}}.
\end{eqnarray*}
Similarly since $j \frown k \in \lambda$ and $i \frown k \in \mu -\lambda$, we have
$$\mu -(\lambda \cup \{i \frown l\}) = \{i \frown k\} \cup ( \mu-(\lambda|_{j \mapsto i} \cup \{j \frown k\}))$$
and thus
$$\lambda-\mu =\lambda|_{j \mapsto i} \cup \{ j \frown k \}-\mu. $$
By the crossing number equation (\ref{crossingidentity}),
\begin{eqnarray*}
 c^{\lambda, i \frown l}_\mu &=& \frac{ t^{|i \frown l|} q^{\crs(\lambda\cap \mu, i \frown l)} t^{|(\lambda\mid_{j \mapsto i} \, \cup \, j \frown k)-\mu|}q^{\crs(\lambda \cap \mu,(\lambda\mid_{j \mapsto i} \, \cup \, j \frown k)-\mu)}}{q^{\crs(\lambda \cap \mu, \{i \frown k\} \cup (\mu - (\lambda|_{j \mapsto i} \, \cup \, j \frown k)))}}\\
 &=& \frac{t^{|i \frown l|} q^{\crs(\lambda\cap \mu, i \frown l)}}{q^{\crs(\lambda\cap\mu, i \frown k)}}\cdot \frac{ t^{|(\lambda\mid_{j \mapsto i} \, \cup \, j \frown k)-\mu|}q^{\crs(\lambda \cap \mu,(\lambda\mid_{j \mapsto i} \, \cup \, j \frown k)-\mu)}}{q^{\crs(\lambda \cap \mu, \mu - (\lambda|_{j \mapsto i} \, \cup \, j \frown k))}}.
 \end{eqnarray*}
Moreover any arc in $\lambda$ that crosses with $i \frown k$ or $i \frown l$ must be in $\mu$, implying
\begin{eqnarray*}
c^{\lambda, i \frown l}_\mu&=& \frac{t^{|i \frown l|} q^{\crs(\lambda, i \frown l)}}{q^{\crs(\lambda, i \frown k)}}\cdot \frac{ t^{|(\lambda\mid_{j \mapsto i} \, \cup \, j \frown k)-\mu|}q^{\crs((\lambda\mid_{j \mapsto i} \, \cup \, j \frown k) \cap \mu,(\lambda\mid_{j \mapsto i} \, \cup \, j \frown k)-\mu)}}{q^{\crs((\lambda\mid_{j \mapsto i} \, \cup \, j \frown k) \cap \mu, \mu - (\lambda\mid_{j \mapsto i} \, \cup \, j \frown k))}}\\
&=& c^{\lambda, i \frown l}_{\lambda \,\cup \{ i \frown k\}} c^{\lambda\mid_{j \mapsto i}, j \frown k}_\mu.
\end{eqnarray*}
\end{proof}

\begin{thm}\label{branchingrules}
For $\lambda \vdash [n]$, $i \notin le(\lambda)$, and $1\leq i < l \leq n$, we have
$$\chi^\lambda \odot \chi^{i \frownx l} = \sum_{\mu \in C^{\lambda, i \frown l}} c^{\lambda, i \frown l}_\mu \chi^\mu
\quad \text{where}\quad
 c_\mu^{\lambda, i\frown l}=\displaystyle \frac{ t^{|(\lambda \cup \{i \frown l\})-\mu|} q^{\crs((\lambda \cup \{i \frown l\}) \cap \mu, (\lambda \cup \{i \frown l\})-\mu) }}{q^{\crs((\lambda \cup \{i \frown l\}) \cap \mu,\mu-(\lambda \cup \{i \frown l\}))}} .
$$
where $C^{\lambda, i \frown l}$ is the shell set of $\lambda \cup \{i \frown l\}$ and $c^{\lambda, i \frown l}_\mu$ is the shell coefficient of $\lambda \cup \{i \frown l\}$ and $\mu$.

\end{thm}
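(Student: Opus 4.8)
The plan is to prove the identity by induction on the width $w=l-i\ge 1$. When $w=1$ the sum defining $\chi^{i\frownx l}$ is empty, so $\chi^{i\frownx l}=t\mathbbm{1}$ and the left-hand side is $t\chi^\lambda$; on the right-hand side a shell of width $1$ is a single arc, so $C^{\lambda,i\frown l}=\{\lambda\}$, and since an arc of width $1$ crosses nothing, $c^{\lambda,i\frown l}_\lambda=t$. Thus both sides equal $t\chi^\lambda$.

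For $w\ge 2$ I would peel off the top node of the $\frownx$ diagram using the identity $\chi^{i\frownx l}=\chi^{i\frownx(l-1)}+t\,\chi^{i\frown(l-1)}$, which is immediate from the definition, so that
\[\chi^\lambda\odot\chi^{i\frownx l}=\chi^\lambda\odot\chi^{i\frownx(l-1)}+t\,\bigl(\chi^\lambda\odot\chi^{i\frown(l-1)}\bigr).\]
The induction hypothesis (the width $l-1-i$ is smaller) rewrites the first summand as $\sum_{\mu\in C^{\lambda,i\frown(l-1)}}c^{\lambda,i\frown(l-1)}_\mu\chi^\mu$. For the second summand I distinguish three cases by the node $l-1$. \emph{(a)} If $l-1\notin re(\lambda)$, then since $i\notin le(\lambda)$ the set $\lambda\cup\{i\frown(l-1)\}$ is a set partition and $(\ref{arctensor})$ gives $\chi^\lambda\odot\chi^{i\frown(l-1)}=\chi^{\lambda\cup\{i\frown(l-1)\}}$. \emph{(b)} If $l-1\in re(\lambda)$ with $j\frown(l-1)\in\lambda$ and $j>i$, then $(\ref{arctensor})$ and Proposition \ref{tensorarcs} give $\chi^\lambda\odot\chi^{i\frown(l-1)}=\chi^{\lambda|_{j\mapsto i}}\odot\chi^{j\frownx(l-1)}$, and since $j\notin le(\lambda|_{j\mapsto i})$ and the width $l-1-j$ is smaller, the induction hypothesis rewrites this as $\sum_{\nu\in C^{\lambda|_{j\mapsto i},\,j\frown(l-1)}}c^{\lambda|_{j\mapsto i},\,j\frown(l-1)}_\nu\chi^\nu$. \emph{(c)} If $l-1\in re(\lambda)$ with $j\frown(l-1)\in\lambda$ and $j<i$, then $(\ref{arctensor})$ and Proposition \ref{tensorarcs} give $\chi^\lambda\odot\chi^{i\frown(l-1)}=\chi^\lambda\odot\chi^{i\frownx(l-1)}$, another copy of the first summand; using $1+t=q$, the whole right side becomes $q\sum_{\mu\in C^{\lambda,i\frown(l-1)}}c^{\lambda,i\frown(l-1)}_\mu\chi^\mu$.

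It then remains to match these expansions against $\sum_{\mu\in C^{\lambda,i\frown l}}c^{\lambda,i\frown l}_\mu\chi^\mu$, and here Lemma \ref{shellset} and Lemma \ref{coefficient} do the work. In case \emph{(a)}, Lemma \ref{shellset} gives $C^{\lambda,i\frown l}=C^{\lambda,i\frown(l-1)}\sqcup\{\lambda\cup\{i\frown(l-1)\}\}$; since no arc of $\lambda$ ends at $l-1$, lengthening $i\frown(l-1)$ to $i\frown l$ changes none of the crossing numbers in the shell coefficient (by $(\ref{crossingidentity})$), so $c^{\lambda,i\frown l}_\mu=c^{\lambda,i\frown(l-1)}_\mu$ for $\mu\in C^{\lambda,i\frown(l-1)}$ while $c^{\lambda,i\frown l}_{\lambda\cup\{i\frown(l-1)\}}=t$. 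In case \emph{(b)}, Lemma \ref{shellset} gives $C^{\lambda,i\frown l}=C^{\lambda,i\frown(l-1)}\sqcup C^{\lambda|_{j\mapsto i},\,j\frown(l-1)}$ (the two pieces are disjoint: the shell witnessing membership in the second piece has $j\frown(l-1)$ as its widest arc, forcing $i\frown(l-1)$ into $\mu$, whereas for the first piece $i\frown(l-1)$ is the widest arc and lies outside $\mu$); again no arc of $\lambda$ ending at $l-1$ has left endpoint below $i$, so the coefficients on the old piece are unchanged, and on the new piece Lemma \ref{coefficient} factors $c^{\lambda,i\frown l}_\nu=c^{\lambda,i\frown l}_{\lambda\cup\{i\frown(l-1)\}}\,c^{\lambda|_{j\mapsto i},\,j\frown(l-1)}_\nu=t\,c^{\lambda|_{j\mapsto i},\,j\frown(l-1)}_\nu$, the outer factor being $t$ because the arc of $\lambda$ ending at $l-1$ has left endpoint $>i$. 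In case \emph{(c)}, Lemma \ref{shellset} gives $C^{\lambda,i\frown l}=C^{\lambda,i\frown(l-1)}$ as sets, but the arc $j\frown(l-1)\in\lambda$ now crosses $i\frown l$ from the left and, having left endpoint $j<i$, is forced out of the shell and hence into $\mu$ for every $\mu\in C^{\lambda,i\frown(l-1)}$, so exactly one extra factor of $q$ appears and $c^{\lambda,i\frown l}_\mu=q\,c^{\lambda,i\frown(l-1)}_\mu$. In every case the expansion of $\chi^\lambda\odot\chi^{i\frownx l}$ produced above is exactly $\sum_{\mu\in C^{\lambda,i\frown l}}c^{\lambda,i\frown l}_\mu\chi^\mu$, which closes the induction.

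The main obstacle is this last matching step: one must control precisely how the exponents of $q$ in the shell coefficient shift when the outer arc $i\frown(l-1)$ is lengthened to $i\frown l$, and the key point is that this shift is accounted for entirely by the arcs of $\lambda$ separating $i$ from $l-1$ on the left, each of which the shell geometry forces into $\mu$. The additivity $(\ref{crossingidentity})$ of the crossing number, the factorization of Lemma \ref{coefficient} (whose proof uses only that $j\frown k\in\lambda$ and $i<j<k<l$, not the auxiliary arc $h\frown l$ in its hypothesis, so it applies here), and the disjointness implicit in Lemma \ref{shellset} are precisely the tools that make this rigorous.
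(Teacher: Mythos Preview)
Your proof is correct and takes a genuinely different route from the paper's. Both induct on the width $l-i$, but the paper expands the full sum $\chi^{i\frownx l}=t\,\mathbbm{1}+t\sum_{i<k<l}\chi^{i\frown k}$ in one stroke, partitions the range of $k$ according to the three cases $k\notin re(\lambda)$, $h\frown k\in\lambda$ with $h<i$, and $j\frown k\in\lambda$ with $j>i$, applies the induction hypothesis to every summand, and then must re-collect the coefficient of each $\chi^\mu$ by summing a geometric-type series in $q$; this is where the paper invokes its auxiliary Lemma~\ref{crossing} (the $q$-analogue identity $\sum q^{\crs(\lambda,j\frown k)}=[\crs(\lambda,j\frown l)]_q$). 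You instead peel off only the top term via $\chi^{i\frownx l}=\chi^{i\frownx(l-1)}+t\,\chi^{i\frown(l-1)}$ and split into your three cases according to the single node $l-1$. The payoff is that you never face a geometric sum: the passage from $c^{\lambda,i\frown(l-1)}_\mu$ to $c^{\lambda,i\frown l}_\mu$ contributes either a factor $1$ or a factor $q$, handled cleanly by a single crossing count, so Lemma~\ref{crossing} is bypassed entirely. The cost is that you must invoke Lemma~\ref{coefficient} in a slightly stronger form than stated (without the hypothesis $h\frown l\in\lambda$), but you correctly observe that its proof never uses that hypothesis. Overall your argument is a little more elementary and self-contained; the paper's argument computes the coefficients more explicitly but leans on the extra summation lemma.
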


\noindent Before proving the theorem we state a lemma about the $q$-analog of a crossing number.  In general, the $q$-analog of a nonnegative integer $n$ is 
\[[n]_q = \frac{q^n-1}{q-1}. \]

\begin{lemma}\label{crossing}
For $\lambda \vdash [n]$, and $1\leq j < l \leq n$ where $j \notin le(\lambda)$, we have
$$\sum_{\substack{i \frown k \in \lambda\\ i<j<k<l} }  q^{\crs(\lambda, j\frown k)} = \frac{q^{\crs(\lambda,j\frown l)}-1}{q-1} = [\crs(\lambda, j \frown k)]_q.$$
\end{lemma}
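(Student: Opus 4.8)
The plan is to unfold the definition of the crossing number and exploit the fact that, in a set partition, no two arcs share a right endpoint. By the definition in Section~\ref{set partition}, for a single arc we have $\crs(\lambda, j\frown k) = |\{a\frown b \in \lambda \mid a<j<b<k\}|$; in particular $\crs(\lambda, j\frown l)$ counts exactly the arcs $a\frown b\in\lambda$ with $a<j<b<l$, which is precisely the index set of the sum on the left-hand side. So, writing $m = \crs(\lambda, j\frown l)$, I would enumerate the arcs of this index set as $i_1\frown k_1, \ldots, i_m\frown k_m$, ordered so that $k_1 < k_2 < \cdots < k_m$. This ordering is legitimate and the $k_r$ are genuinely distinct because a set partition contains no conflicting arcs (the hypothesis $j\notin le(\lambda)$ plays no essential role here, but is harmless).

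The key step is to evaluate $\crs(\lambda, j\frown k_r)$ for each $r$. An arc $a\frown b\in\lambda$ is counted by $\crs(\lambda, j\frown k_r)$ exactly when $a<j<b<k_r$. Since $k_r<l$, any such arc also satisfies $a<j<b<l$, hence appears in the enumeration above, say as $i_s\frown k_s$ with $k_s = b < k_r$, which forces $s<r$. Conversely, each of $i_1\frown k_1, \ldots, i_{r-1}\frown k_{r-1}$ satisfies $i_t<j<k_t<k_r$ and so is counted. Therefore $\crs(\lambda, j\frown k_r) = r-1$.

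Substituting this into the sum yields
\[
\sum_{\substack{i\frown k\in\lambda\\ i<j<k<l}} q^{\crs(\lambda, j\frown k)} \;=\; \sum_{r=1}^{m} q^{r-1} \;=\; 1 + q + \cdots + q^{m-1} \;=\; \frac{q^m - 1}{q-1},
\]
which equals $[m]_q = [\crs(\lambda, j\frown l)]_q$ by the definition of the $q$-analog (and the degenerate case $m=0$, where both sides vanish, is covered automatically). There is no genuine obstacle in this argument; the only point demanding care is the claim in the previous paragraph that the arcs crossing $j\frown k_r$ from the left are exactly the earlier arcs in the list, equivalently that the numbers $\crs(\lambda, j\frown k_r)$ sweep through $0,1,\ldots,m-1$ with no repeats and no gaps. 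This is exactly where the distinctness and linear ordering of the right endpoints $k_r$, together with the bound $k_r<l$, are used.
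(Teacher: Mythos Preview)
Your proof is correct and follows essentially the same approach as the paper's: enumerate the arcs $i\frown k\in\lambda$ with $i<j<k<l$ in increasing order of right endpoint, observe that $\crs(\lambda, j\frown k_r)=r-1$ because the arcs crossing $j\frown k_r$ are exactly the earlier ones in the list, and sum the resulting geometric series. Your remark that the hypothesis $j\notin le(\lambda)$ is not actually used in the argument is also correct.
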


\begin{proof}
Let $\lambda \vdash [n]$, $1\leq j < l \leq n$, and $j \notin le(\lambda)$.  If the set of arcs in $\lambda$ that cross with $j\frown k$ is given by
\[ \{i \frown k \in \lambda\ \mid  i< j< k< l\} = \{i_1 \frown k_1, i_2 \frown k_2, \ldots, i_r \frown k_r\}, \]
then for $1 \leq s \leq r$
\[\{i \frown k \in \lambda\ \mid  i< j< k< k_s\} = \{i_1 \frown k_1, i_2 \frown k_2, \ldots, i_{s-1} \frown k_{s-1}\}. \] 
By the definition of the crossing number
\[ \sum_{\substack{i<j<k<l\\ i \frown k \in \lambda} }  q^{\crs(\lambda, j\frown k)} = \sum_{s=1}^r q^{\# \{ i \frown k \in \lambda \mid i < j < k <k_s \} }  = \sum_{s=1}^r q^{s-1} = \frac{q^r-1}{q-1} = \frac{q^{\crs(\lambda, j\frown l)}-1}{q-1}. \]

\end{proof}

\noindent We are now ready to prove Theorem \ref{branchingrules}.

\begin{proof}
We induct on $l-i$.  For the base case assume $l-i = 1$.  Then $C^{\lambda, i \frown l} = \{\lambda\}$, and we obtain
\[ \chi^\lambda \odot \chi^{i \frownx l} = \chi^\lambda \odot t \mathbbm{1} = t \chi^\lambda = c^{\lambda, i\frown l}_\lambda \chi^\lambda \]
as desired.

Assume the formula holds for all $k-j< l-i$.  Then, this yields
\begin{eqnarray*}
\chi^\lambda \odot \chi^{i \frownx l}&=& \chi^\lambda \odot t \bigg(\mathbbm{1}+\sum_{i<k<l} \chi^{i \frown k} \bigg)\\
&=& t \chi^\lambda + t \sum_{i<k<l} \chi^\lambda \odot \chi^{i \frown k}\\
&=& t\chi^\lambda + t \bigg( \sum_{\substack{i<k<l \\ k \notin re(\lambda)} } \chi^\lambda \odot \chi^{i \frown k} + \sum_{\substack{h<i<k<l\\ h \frown k \in \lambda} } \chi^\lambda \odot \chi^{i \frown k} + \sum_{\substack{i<j<k<l\\ j \frown k \in \lambda} } \chi^\lambda \odot \chi^{i \frown k} \bigg),
\end{eqnarray*}
which by Proposition \ref{tensorarcs} is equal to
\begin{eqnarray*}
&=& t\chi^\lambda +t \sum_{\substack{i<k<l \\ k \notin re(\lambda)}}\chi^{\lambda \cup  \{i \frown k\}}+ t\sum_{\substack{h<i<k<l\\ h \frown k \in \lambda}} \chi^\lambda \odot \chi^{i \frownx k}+ t\sum_{\substack{i<j<k<l\\ j \frown k \in \lambda}} \chi^{\lambda \cup  \{i \frown k\}- \{j\frown k\}} \odot \chi^{j \frownx k}.
\end{eqnarray*}
Recall from Definition \ref{replace} that $\lambda|_{j \mapsto i} =\lambda \cup \{i \frown k\} - \{j \frown k\}$ for each $j \frown k \in \lambda$ such that $i<j<k<l$. By the induction hypothesis the tensor product $\chi^\lambda \odot \chi^{i \frownx l}$ is
\begin{eqnarray*}
&=& t\chi^\lambda + t\hspace{-.1cm}\sum_{\substack{i<k<l \\ k \notin re(\lambda)}}\chi^{\lambda \cup  \{i \frown k\}}+ t\hspace{-.1cm}\sum_{\substack{h<i<k<l\\ h \frown k \in \lambda} } \bigg(\sum_{\substack{\mu \in C^{\lambda, i \frown k} } } c^{\lambda, i \frown k}_\mu \chi^\mu\bigg)+ t\hspace{-.1cm}\sum_{\substack{i<j<k<l\\ j \frown k \in \lambda}} \bigg(\sum_{\substack{\mu \in C^{\lambda|_{j \mapsto i}, j \frown k}} }\hspace{-.2cm} c^{\lambda|_{j \mapsto i}, j \frown k}_\mu \chi^\mu\bigg)\\
&=& t\chi^\lambda + t\sum_{\substack{i<k<l \\ k \notin re(\lambda)}}\chi^{ \lambda \cup  \{i \frown k\}}+ t \sum_{\mu \vdash [n] } \bigg(\sum_{\substack{h<i<k<l\\ h \frown k \in \lambda \\ \mu \in C^{\lambda, i \frown k} }} c^{\lambda, i \frown k}_\mu \chi^\mu\bigg)+ t\sum_{\mu \vdash [n] }\bigg(\sum_{\substack{i<j<k<l\\ j \frown k \in \lambda \\ \mu \in C^{\lambda|_{j \mapsto i}, j \frown k}}}\hspace{-.2cm}  c^{\lambda|_{j \mapsto i}, j \frown k}_\mu \chi^\mu\bigg).
\end{eqnarray*}

By Lemma \ref{crossing}, the coefficient of $\chi^\lambda$ will be
$$t + t \sum_{\substack{h<i<k<l\\ h \frown k \in \lambda}} c^{\lambda, i\frown k}_\lambda = t\bigg(1+  \sum_{\substack{h<i<k<l\\ h \frown k \in \lambda}} tq^{\crs(\lambda,i \frown k)}\bigg) = t \bigg(1 + t \cdot \frac{q^{\crs(\lambda,i \frown l)}-1}{t}\bigg) = t q^{\crs(\lambda,i \frown l)} = c^{\lambda, i \frown l}_\lambda.$$

\noindent Similarly for $\lambda \,\cup \{ i \frown k^\prime\}$ where $i < k^\prime < l$ and $k^\prime \notin re(\lambda)$, the coefficient of $\chi^{\lambda \,\cup \{ i \frown k^\prime\}}$ is
\[
t + t \sum_{\substack{h<i<k^\prime < k<l\\ h \frown k \in \lambda}} c^{\lambda, i\frown k}_\mu = t\bigg(1+  \sum_{\substack{h<i<k^\prime < k<l\\ h \frown k \in \lambda}} t\frac{q^{\crs(\lambda,i \frown k)}}{q^{\crs(\lambda,i \frown k^\prime)}}\bigg) = t\bigg(1+  \sum_{\substack{h<i < k<l\\ h \frown k \in \lambda}} tq^{\crs(\lambda - \nu,i \frown k)}\bigg),\]
where $\nu = \{h \frown k \in \lambda \mid (h \frown k, i \frown k^\prime) \in \Crs(\lambda, i \frown k^\prime)\}$. This is equivalent to
\[
 t \bigg(1 + t \cdot \frac{q^{\crs(\lambda-\nu,i \frown l)}-1}{t}\bigg)
= t \bigg(1 + t \cdot \frac{q^{\crs(\lambda,i \frown l)-\crs(\lambda,i\frown k^\prime)}-1}{t}\bigg)
= \frac{t q^{\crs(\lambda,i \frown l)}}{q^{\crs(\lambda, i\frown k^\prime)}} = c^{\lambda, i \frown l}_{\lambda \,\cup \{ i \frown k\}}
\]
by Lemma \ref{crossing}.  If $j \frown k^\prime \in \lambda$ is such that $i < j < k^\prime < l$ then we have $\lambda|_{j \mapsto i} =\lambda \cup \{i \frown k^\prime\} - \{j \frown k^\prime\}$.  Let $\nu = \{h \frown k \in \lambda \mid (h \frown k, i \frown k^\prime) \in \Crs(\lambda, i \frown k^\prime)\}$.  Using Lemma \ref{coefficient}, the coefficient of $\chi^\mu$ for each $\mu \in C^{\lambda|_{j \mapsto i}, j \frown k^\prime}$ is
\begin{eqnarray*}
 t c^{\lambda|_{j \mapsto i}, j \frown k^\prime}_\mu + t \sum_{\substack{h<i< k^\prime <k<l\\ h \frown k \in \lambda}} c^{\lambda, i\frown k}_\mu 
 &=& t c^{\lambda|_{j \mapsto i}, j \frown k^\prime}_\mu + t \sum_{\substack{h<i< k^\prime <k<l\\ h \frown k \in \lambda}} c^{\lambda, i \frown k}_{\lambda\cup i \frown k^\prime} c^{\lambda|_{j \mapsto i}, j \frown k^\prime}_\mu \\
&=& tc^{\lambda|_{j \mapsto i}, j \frown k^\prime}_\mu \bigg(1+ \sum_{\substack{h<i<k^\prime <k<l\\ h \frown k \in \lambda}} t \frac{q^{\crs(\lambda,i \frown k)}}{q^{\crs(\lambda ,i \frown k^\prime)}} \bigg)\\
&=& tc^{\lambda|_{j \mapsto i}, j \frown k^\prime} \bigg(1+ t \cdot \frac{q^{\crs(\lambda -\nu,i \frown l)}-1}{t} \bigg).
\end{eqnarray*}
Applying Lemmas \ref{crossing} and \ref{coefficient} yields
\begin{eqnarray*}
 tc^{\lambda|_{j \mapsto i}, j \frown k^\prime} \bigg(1+ t \cdot \frac{q^{\crs(\lambda -\nu,i \frown l)}-1}{t} \bigg) 
&=& tc^{\lambda|_{j \mapsto i}, j \frown k^\prime} \bigg(1+ t \cdot \frac{q^{\crs(\lambda ,i \frown l)- \crs(\lambda,i\frown k^\prime)}-1}{t} \bigg) \\
&=& \frac{t q^{\crs(\lambda,i \frown l)}}{q^{\crs(\lambda,i\frown k^\prime)}}c^{\lambda|_{j \mapsto i}, j \frown k^\prime}_\mu\\
&=& c^{\lambda, i \frown l}_{\lambda \,\cup \{ i \frown k\}^\prime} c^{\lambda \mid_{j \mapsto i}, j\frown k^\prime}_\mu\\
&=& c^{\lambda, i \frown l}_\mu.
\end{eqnarray*}

Substituting this into the equation for $\chi^\lambda \odot \chi^{i\frownx l}$ and applying Lemma \ref{shellset} we obtain
\begin{eqnarray*}
\chi^\lambda \odot \chi^{i\frownx l} &=& c^{\lambda, i \frown l}_\lambda \chi^\lambda + \sum_{\substack{i<k<l \\ k \notin re(\lambda)}} c^{\lambda, i \frown l}_{\lambda \,\cup \{ i \frown k\}} \chi^{\lambda \,\cup \{ i \frown k\}}+   \sum_{\substack{\mu \vdash [n] } } \sum_{\substack{i<j<k<l\\ j \frown k \in \lambda \\ \mu \in C^{\lambda|_{j \mapsto i}, j \frown k}}}c^{\lambda, i \frown l}_\mu \chi^\mu \\
&=& \sum_{\mu \in C^{\lambda, i \frown l}} c^{\lambda, i \frown l}_\mu \chi^\mu .
\end{eqnarray*}

\end{proof}

This combinatorial description of the coefficients in the tensor product leads to a combinatorial description of the coefficients in the restriction to $U_{n-1}$.

\begin{cor}\label{restriction}
For $\lambda \vdash [n]$, the restriction $\Res_{U_{n-1}}^{U_n} (\chi^\lambda)$ is given by
$$\Res_{U_{n-1}}^{U_n} (\chi^\lambda) = \sum_{\mu\, \vdash [n-1]} c_\mu^\lambda \chi^{\mu}$$
where
\begin{equation*}
c_\mu^\lambda =\left\{\begin{array}{ll}
\delta_{\lambda \mu} &  \text{if}\ n\notin re(\lambda), \\
\displaystyle \frac{ t^{|\lambda-\mu|} q^{\crs(\lambda \cap \mu, \lambda-\mu) }}{q^{\crs(\lambda \cap \mu,\mu-\lambda)}} & \text{if}\ \mu \in C^{\lambda- \{i \frown n\}, i \frown n}, \\
 0 & \text{otherwise.}\end{array}\right.
\end{equation*}
\end{cor}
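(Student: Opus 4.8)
The plan is to reduce the restriction of $\chi^\lambda$ to the tensor-product computation already carried out in Theorem~\ref{branchingrules}. The starting point is that the operation $\odot$ is pointwise multiplication of class functions, so $\Res^{U_n}_{U_{n-1}}$ is multiplicative for $\odot$: for any class functions $\chi,\psi$ of $U_n$ one has $\Res^{U_n}_{U_{n-1}}(\chi\odot\psi)=\Res^{U_n}_{U_{n-1}}(\chi)\odot\Res^{U_n}_{U_{n-1}}(\psi)$, simply because $(\chi\odot\psi)(u)=\chi(u)\psi(u)$ for all $u\in U_{n-1}$. Combining this with the arc factorization $\chi^\lambda=\bigodot_{i\frown l\in\lambda}\chi^{i\frown l}$ from~(\ref{arctensor}) and the single-arc restriction formula of Proposition~\ref{restrictionarc}, I can compute $\Res^{U_n}_{U_{n-1}}(\chi^\lambda)$ arc by arc.

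First I would dispose of the case $n\notin re(\lambda)$. Since $n$ is the maximal node it can only occur as a right endpoint, so $n\notin re(\lambda)$ forces every arc $i\frown l\in\lambda$ to satisfy $l\neq n$; by Proposition~\ref{restrictionarc} each such $\chi^{i\frown l}$ restricts to itself, and by multiplicativity $\Res^{U_n}_{U_{n-1}}(\chi^\lambda)=\bigodot_{i\frown l\in\lambda}\chi^{i\frown l}=\chi^\lambda$, so $c^\lambda_\mu=\delta_{\lambda\mu}$ over $\mu\vdash[n-1]$, as claimed.

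Now suppose $n\in re(\lambda)$. Because no two arcs of a set partition conflict, there is a \emph{unique} arc $i\frown n\in\lambda$; set $\lambda'=\lambda-\{i\frown n\}$, a set partition with all arcs inside $[n-1]$, and with $i\notin le(\lambda')$ (again because $i\frown n$ is the unique arc of $\lambda$ with left endpoint $i$). Then $\chi^\lambda=\chi^{\lambda'}\odot\chi^{i\frown n}$, and by multiplicativity together with Proposition~\ref{restrictionarc} (the case $l=n$) and the previous paragraph, $\Res^{U_n}_{U_{n-1}}(\chi^\lambda)=\chi^{\lambda'}\odot\chi^{i\frownx n}$, since $\Res^{U_n}_{U_{n-1}}(\chi^{i\frown n})=t(\mathbbm{1}+\sum_{i<k<n}\chi^{i\frown k})=\chi^{i\frownx n}$ by definition. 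Applying Theorem~\ref{branchingrules} with $\lambda$ there replaced by $\lambda'$ and $l=n$ (the hypotheses $i\notin le(\lambda')$ and $1\le i<n\le n$ both hold) gives $\Res^{U_n}_{U_{n-1}}(\chi^\lambda)=\sum_{\mu\in C^{\lambda',\,i\frown n}}c^{\lambda',\,i\frown n}_\mu\chi^\mu$. Finally I unwind the notation: $\lambda'\cup\{i\frown n\}=\lambda$ and $C^{\lambda',\,i\frown n}=C^{\lambda-\{i\frown n\},\,i\frown n}$, so $c^{\lambda',\,i\frown n}_\mu=\frac{t^{|\lambda-\mu|}q^{\crs(\lambda\cap\mu,\,\lambda-\mu)}}{q^{\crs(\lambda\cap\mu,\,\mu-\lambda)}}$ for $\mu\in C^{\lambda-\{i\frown n\},\,i\frown n}$, and the coefficient of every other $\mu\vdash[n-1]$ is $0$, which is exactly the stated formula.

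There is essentially no hard step here: all the content sits in Proposition~\ref{restrictionarc} and Theorem~\ref{branchingrules}. The only points needing a little care are (i) recording the multiplicativity of $\Res^{U_n}_{U_{n-1}}$ for $\odot$, so that~(\ref{arctensor}) can be restricted term by term, and (ii) verifying the side conditions that let Theorem~\ref{branchingrules} be invoked with $\lambda'=\lambda-\{i\frown n\}$, namely $i\notin le(\lambda')$ and $1\le i<n\le n$, both of which are immediate from the non-conflicting property of arcs. If anything counts as an obstacle it is purely notational, matching $\lambda=\lambda'\cup\{i\frown n\}$ inside the shell set and shell coefficient so that the output of Theorem~\ref{branchingrules} reads verbatim as the displayed formula for $c^\lambda_\mu$.
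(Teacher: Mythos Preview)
Your proposal is correct and follows essentially the same route as the paper's proof: factor $\chi^\lambda$ into arcs via~(\ref{arctensor}), restrict arc by arc using Proposition~\ref{restrictionarc}, recombine the arcs not touching $n$ into $\chi^{\lambda-\{i\frown n\}}$, and then invoke Theorem~\ref{branchingrules}. If anything, you are slightly more explicit than the paper in separating out the case $n\notin re(\lambda)$ and in checking the hypothesis $i\notin le(\lambda-\{i\frown n\})$ needed to apply Theorem~\ref{branchingrules}.
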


\begin{proof}
Applying Propositions \ref{restrictionarc}, \ref{tensorarcs}, and Theorem \ref{branchingrules} respectively,
\begin{eqnarray*}
\Res^{U_n}_{U_{n-1}}(\chi^\lambda) &=& \bigodot_{i \frown l \in \lambda} \Res^{U_n}_{U_{n-1}} (\chi^{i\frown l}) =\bigodot_{\substack{j \frown l \in \lambda \\ l \neq n}} \chi^{j\frown l} \odot \chi^{i \frownx n}\\
& =&  \chi^{\lambda-\{i\frown n\}} \odot \chi^{i \frownx n} = \sum_{\mu \in C^{\lambda- \{i \frown n\}, i \frown n}} c^\lambda_\mu \chi^\mu
 \end{eqnarray*}
where
\[ c^\lambda_\mu = c^{\lambda - \{i \frown n\}, i \frown n}_\mu =  \frac{ t^{|\lambda-\mu|} q^{\crs(\lambda \cap \mu, \lambda-\mu) }}{q^{\crs(\lambda \cap \mu,\mu-\lambda)}}. \]
\end{proof}

\begin{example}
Similar to Example \ref{seashells}, let
\[\lambda =
\begin{tikzpicture}
\draw[fill=black] (-1,0) circle (.07);
 \draw[fill=black] (-.5,0) circle (.07);
\draw[fill=black] (.0,0) circle (.07);
\draw[fill=black] (.5,0) circle (.07);
\draw[fill=black] (1,0) circle (.07);
\draw[fill=black] (1.5,0) circle (.07);
\draw[line width=.2mm] (-1,0) to[in=110, out=70, circle, looseness=1.4] (.5,0);
\draw[line width=.2mm] (-.5,0) to[in=110, out=70, circle, looseness=1.35] (1.5,0);
\draw[line width=.2mm] (0,0) to[in=110, out=70, circle, looseness=1.4] (1,0);
\end{tikzpicture}
\]
so that
\[C^{\lambda-\{2\frown 6\}, 2\frown 6} =  \left\{
\begin{tikzpicture}
\draw[fill=black] (-1,0) circle (.07);
 \draw[fill=black] (-.5,0) circle (.07);
\draw[fill=black] (.0,0) circle (.07);
\draw[fill=black] (.5,0) circle (.07);
\draw[fill=black] (1,0) circle (.07);
\draw[line width=.2mm] (-1,0) to[in=110, out=70, circle, looseness=1.4] (.5,0);
\draw[line width=.2mm] (0,0) to[in=110, out=70, circle, looseness=1.4] (1,0);
\end{tikzpicture} \quad,\quad
\begin{tikzpicture}
\draw[fill=black] (-1,0) circle (.07);
 \draw[fill=black] (-.5,0) circle (.07);
\draw[fill=black] (.0,0) circle (.07);
\draw[fill=black] (.5,0) circle (.07);
\draw[fill=black] (1,0) circle (.07);
\draw[line width=.2mm] (-1,0) to[in=110, out=70, circle, looseness=1.4] (.5,0);
\draw[line width=.2mm] (0,0) to[in=110, out=70, circle, looseness=1.4] (1,0);
\draw[line width=.2mm] (-.5,0) to[in=110, out=70, circle, looseness=1.7] (0,0);
\end{tikzpicture} \quad,\quad
\begin{tikzpicture}
\draw[fill=black] (-1,0) circle (.07);
 \draw[fill=black] (-.5,0) circle (.07);
\draw[fill=black] (.0,0) circle (.07);
\draw[fill=black] (.5,0) circle (.07);
\draw[fill=black] (1,0) circle (.07);
\draw[line width=.2mm] (-1,0) to[in=110, out=70, circle, looseness=1.4] (.5,0);
\draw[line width=.2mm] (-.5,0) to[in=110, out=70, circle, looseness=1.4] (1,0);
\end{tikzpicture}
\right\}.
\]
Drawing the arcs of $\mu= \{1\frown 4, 2\frown 3 \frown 5\}$ below the nodes of $\lambda$ as shown below
\begin{center}
\begin{tikzpicture}
\draw[fill=black] (-1,0) circle (.07);
 \draw[fill=black] (-.5,0) circle (.07);
\draw[fill=black] (.0,0) circle (.07);
\draw[fill=black] (.5,0) circle (.07);
\draw[fill=black] (1,0) circle (.07);
\draw[fill=black] (1.5,0) circle (.07);
\draw[line width=.2mm] (-1,0) to[in=110, out=70, circle, looseness=1.4] (.5,0);
\draw[line width=.2mm] (-1,0) to[in=-110, out=-70, circle, looseness=1.4] (.5,0);
\draw[line width=.2mm] (-.5,0) to[in=110, out=70, circle, looseness=1.35] (1.5,0);
\draw[line width=.2mm] (0,0) to[in=110, out=70, circle, looseness=1.4] (1,0);
\draw[line width=.2mm] (0,0) to[in=-110, out=-70, circle, looseness=1.4] (1,0);
\draw[line width=.2mm] (-.5,0) to[in=-110, out=-70, circle, looseness=1.7] (0,0);
\end{tikzpicture}
\end{center}
illustrates that \[c^\lambda_\mu = \frac{t^1 \cdot q^1}{q^0} = tq\] since
\[ \lambda - \mu = \{2 \frown 6\},\quad \Crs(\lambda \cap \mu, \lambda-\mu) = \{(1\frown 4, 2\frown 6) \}, \quad \crs(\lambda \cap \mu, \lambda-\mu) = 1, \quad \crs(\lambda \cap \mu, \mu-\lambda) = 0 .\]
We can calculate the other coefficients in the same manner to obtain
\[\Res^{U_6}_{U_{5}}(\chi^\lambda) = tq\chi^{\{1\frown 4, 3\frown 5 \}}+ tq\chi^{\{2 \frown 3\frown 5, 1 \frown 4  \}}+ t^2q\chi^{\{1 \frown 4, 2 \frown 5\}}.\]
\end{example}

\subsection{Induction and Superinduction}
While the restriction of a supercharacter of $U_n$ is a nonnegative integer linear combination of supercharacters, an induced supercharacter may not be a sum of supercharacters.  In fact, the induced character may not even be a superclass function; for an example see \cite[Section 6]{supercharacter}. If instead we generalize to superinduction by averaging over superclasses in the same way that induction averages over conjugacy classes, then the constructed function will be a linear combination of supercharacters with rational coefficients \cite[Lemma 6.7]{supercharacter}.

Suppose $H\subseteq G$ and $\chi$ is a superclass function of $H$.  If $\mathcal{K}_g$ is the superclass containing $g \in G$, then the \textit{superinduction} $\SInd^G_H(\chi)$ is
\[ \SInd^{G}_H(\chi) (g) = |G:H| \frac{1}{|\mathcal{K}_g|} \sum_{x \in \mathcal{K}_g} \dot{\chi}(x) \ \text{where}\ \dot{\chi}(x) = \left\{ \begin{array}{lr} \chi(x)& \text{if}\ x\in H \\ 0 & \text{if}\ x\not\in H. \end{array}\right. \]
A nice property of superinduction is that the analog of Frobenius reciprocity holds.

\begin{proposition}[{Frobenius Reciprocity \cite[Lemma 5.2]{hendrickson}}]\label{superinduction frobenius reciprocity}
Let $H$ be a subgroup of $G$.  Suppose $\varphi$ is a superclass function of $G$ and $\theta$ is a class function of $H$.  Then
\[ \langle \SInd_H^G(\theta) , \varphi \rangle_G = \langle \theta, \Res_H^G (\varphi) \rangle_H. \]
\end{proposition}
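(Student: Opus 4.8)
The plan is to prove the identity by a direct expansion of both sides from the definitions, in the style of the classical Frobenius reciprocity argument. Writing $\langle \alpha,\beta\rangle_K = \tfrac{1}{|K|}\sum_{k\in K}\alpha(k)\overline{\beta(k)}$ for the inner product on a finite group $K$, I would first substitute the definition of superinduction into the left-hand side to obtain
\[ \langle \SInd_H^G(\theta),\varphi\rangle_G = \frac{|G:H|}{|G|}\sum_{g\in G}\frac{1}{|\mathcal{K}_g|}\Big(\sum_{x\in\mathcal{K}_g}\dot\theta(x)\Big)\overline{\varphi(g)}. \]

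Next I would use the hypothesis that $\varphi$ is a superclass function of $G$: it is constant on each superclass, so $\varphi(g)=\varphi(x)$ for every $x\in\mathcal{K}_g$, and $\overline{\varphi(g)}$ may be moved inside the inner sum. Then I would reindex the resulting double sum $\sum_{g\in G}\tfrac{1}{|\mathcal{K}_g|}\sum_{x\in\mathcal{K}_g}\dot\theta(x)\overline{\varphi(x)}$ by grouping the outer sum according to which superclass contains $g$; since the superclasses partition $G$, each superclass $\mathcal{K}$ contributes its fixed inner value $\tfrac{1}{|\mathcal{K}|}\sum_{x\in\mathcal{K}}\dot\theta(x)\overline{\varphi(x)}$ exactly $|\mathcal{K}|$ times, so the two factors of $|\mathcal{K}|$ cancel and the double sum collapses to $\sum_{x\in G}\dot\theta(x)\overline{\varphi(x)}$. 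Thus the left-hand side equals $\tfrac{|G:H|}{|G|}\sum_{x\in G}\dot\theta(x)\overline{\varphi(x)}$. Since $\dot\theta$ vanishes outside $H$ and $|G:H|/|G|=1/|H|$, this is $\tfrac{1}{|H|}\sum_{x\in H}\theta(x)\overline{\varphi(x)} = \langle\theta,\Res_H^G(\varphi)\rangle_H$, completing the argument.

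The computation is essentially bookkeeping, so there is no deep obstacle; the one place that carries the actual content is the reindexing/cancellation step, and it is worth isolating exactly what makes it work. Two facts are used: that the superclasses form a partition of $G$ (so the sum over $g\in G$ can be reorganized as an iterated sum over superclasses and their elements), and that $\varphi$ is constant on superclasses (so that, after moving $\overline{\varphi(g)}$ inside, the inner sum depends only on the superclass $\mathcal{K}_g$ and not on the particular representative $g$). If $\varphi$ were merely a class function this last point would fail and the adjunction would break, so the superclass hypothesis on $\varphi$ is the essential one. By contrast, the class-function hypothesis on $\theta$ is never invoked in the manipulation; it only ensures that the right-hand side $\langle\theta,\Res_H^G(\varphi)\rangle_H$ is an inner product of class functions, as one expects.
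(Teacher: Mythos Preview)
Your argument is correct. The paper does not give its own proof of this proposition; it simply quotes it from \cite[Lemma 5.2]{hendrickson}, and your direct expansion---pulling $\overline{\varphi(g)}$ inside the inner sum using that $\varphi$ is constant on superclasses, then collapsing the double sum via the partition of $G$ into superclasses---is exactly the standard computation one expects (and essentially the one in the cited reference and in \cite{supercharacter}).
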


However, superinduced characters are not necessarily characters so it is useful to know when superinduction is equivalent to induction.

\cite[Section 3.2]{superinduction} examines some cases when this occurs for a larger class of $p$-groups known as algebra groups.  If $J$ is a finite dimensional nilpotent associative algebra over $\mathbb{F}_q$, then the algebra group based on $J$ is $G = \{1+x \mid x \in J\}$ under the multiplication $(1+x)(1+y) = 1+x+y +xy$. In particular, Marberg and Thiem show if we embed $U_{n-1}$ into $U_n$ by
\[ U_{n-1} = \{u \in U_n \mid (u-1)_{ij} \neq 0 \ \text{implies}\ i<j<n \} \]
then for any superclass function $\chi$ of $U_{n-1}$,
\[ \SInd_{U_{n-1}}^{U_n}(\chi) = \Ind_{U_{n-1}}^{U_n}(\chi). \]
They also provide some conditions when superinduction is the same as induction.

\begin{proposition}[{\cite[Theorem 3.1]{superinduction}}]\label{superinduction vs induction}
Let $H$ be a subalgebra group of an algebra group $G$, and suppose

\begin{enumerate}
\item no two superclasses of $H$ are in the same superclass of $G$, and
\item $x(h-1)+1 \in H$ for all $x \in G, h \in H$.
\end{enumerate}
Then the superinduction of any superclass function $\chi$ of $H$ is
$$\SInd_H^G(\chi) = \Ind_H^G(\chi).$$
\end{proposition}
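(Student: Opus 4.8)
The plan is to show that $\Ind_{H}^{G}(\chi)$ is itself a superclass function of $G$; once that is known, the two reciprocity statements force it to coincide with $\SInd_{H}^{G}(\chi)$. First observe that $\SInd_{H}^{G}(\chi)$ is automatically a superclass function of $G$: its defining formula averages $\dot\chi$ over the superclass $\mathcal{K}_g$, so its value at $g$ depends only on $\mathcal{K}_g$. Next, for every superclass function $\varphi$ of $G$, ordinary Frobenius reciprocity gives $\langle \Ind_{H}^{G}(\chi),\varphi\rangle_G = \langle \chi,\Res_{H}^{G}(\varphi)\rangle_H$, while Proposition \ref{superinduction frobenius reciprocity}, applied to the superclass function $\varphi$ and the class function $\chi$, gives $\langle \SInd_{H}^{G}(\chi),\varphi\rangle_G = \langle \chi,\Res_{H}^{G}(\varphi)\rangle_H$. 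Hence $\Ind_{H}^{G}(\chi)-\SInd_{H}^{G}(\chi)$ is orthogonal to every superclass function of $G$. If $\Ind_{H}^{G}(\chi)$ is a superclass function, then so is this difference, and being orthogonal to all superclass functions it must vanish.

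So the real content is that, under hypotheses (1) and (2), $\Ind_{H}^{G}(\chi)$ is constant on superclasses of $G$. By linearity it suffices to treat $\chi=\mathbbm{1}_{\mathcal L}$, the indicator of a superclass $\mathcal L$ of $H$, since these span the superclass functions of $H$. For such $\chi$ the usual formula rewrites as $\Ind_{H}^{G}(\mathbbm{1}_{\mathcal L})(g)=\tfrac{1}{|H|}\,|\{x\in G \mid xgx^{-1}\in\mathcal L\}| = |G:H|\cdot\frac{|\mathrm{Cl}(g)\cap\mathcal L|}{|\mathrm{Cl}(g)|}$, where $\mathrm{Cl}(g)$ is the $G$-conjugacy class of $g$, because conjugation by $x$ hits each element of $\mathrm{Cl}(g)$ exactly $|C_G(g)|$ times. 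By hypothesis (1), $\mathcal L$ lies inside a single superclass of $G$, so $\mathrm{Cl}(g)\cap\mathcal L\neq\varnothing$ forces $\mathcal L\subseteq\mathcal K_g$, and otherwise $\Ind_{H}^{G}(\mathbbm{1}_{\mathcal L})(g)=0$. Since $\SInd_{H}^{G}(\mathbbm{1}_{\mathcal L})(g)=|G:H|\cdot\frac{|\mathcal K_g\cap\mathcal L|}{|\mathcal K_g|}$, which is $|G:H|\cdot\frac{|\mathcal L|}{|\mathcal K_g|}$ when $\mathcal L\subseteq\mathcal K_g$ and $0$ otherwise, the whole proposition collapses to one equidistribution identity: for every superclass $\mathcal K$ of $G$, every superclass $\mathcal L$ of $H$ with $\mathcal L\subseteq\mathcal K$, and every conjugacy class $\mathrm{Cl}$ of $G$ inside $\mathcal K$,
\[ \frac{|\mathrm{Cl}\cap\mathcal L|}{|\mathrm{Cl}|} \;=\; \frac{|\mathcal L|}{|\mathcal K|}. \]

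To prove this I would pass to the algebra-group picture $G=1+J$, $H=1+K$, in which superclasses of $G$ correspond to the two-sided orbits $GvG\subseteq J$ and conjugacy classes to the orbits $\{gvg^{-1}\}$, and in which hypothesis (2) says precisely that $JK\subseteq K$, i.e.\ that $K$ is a left ideal of $J$. The key consequence is that $xK=K=x^{-1}K$ for every $x\in G$, so that for $g=1+v$ the condition $xgx^{-1}\in H$ is equivalent to the simpler condition $(g-1)x^{-1}=vx^{-1}\in K$. This lets one analyze $\{gvg^{-1}\}\cap K$, and its refinement by the $H$-orbit $\mathcal L$, through the left- and right-multiplication actions that generate the two-sided orbit $GvG$, and then transitivity of that two-sided action should spread $\mathcal L$ evenly over the conjugacy classes making up $\mathcal K$, yielding the identity.

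The hard part will be exactly this last step: extracting the proportionality $|\mathrm{Cl}\cap\mathcal L|\,|\mathcal K| = |\mathcal L|\,|\mathrm{Cl}|$ from the left-ideal property. A naive double count of $\{(g,h_1,h_2)\mid gvg^{-1}=h_1 w h_2^{-1}\}$ turns out to be tautological, and substituting $v\mapsto avb^{-1}$ only moves the difficulty around; one genuinely needs the left-ideal structure, for instance to pin down $HwH$ inside $GwG\cap K$, or to build a size-preserving bijection between conjugacy classes in a common superclass that restricts to bijections on their intersections with $\mathcal L$. Everything else — the reciprocity reduction and the case analysis driven by hypothesis (1) — is routine bookkeeping.
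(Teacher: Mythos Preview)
The paper does not prove this proposition; it quotes it verbatim as \cite[Theorem 3.1]{superinduction} and then applies it. So there is no ``paper's own proof'' to compare against here.

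That said, your proposal is not a proof but a reduction followed by an honest admission that the reduction has not been closed. Your opening reciprocity argument is fine, and the rewriting of $\Ind_H^G(\mathbbm{1}_{\mathcal L})(g)$ and $\SInd_H^G(\mathbbm{1}_{\mathcal L})(g)$ is correct, so the question really does come down to the equidistribution identity
\[
\frac{|\mathrm{Cl}\cap\mathcal L|}{|\mathrm{Cl}|}=\frac{|\mathcal L|}{|\mathcal K|}
\]
for every $G$-conjugacy class $\mathrm{Cl}\subseteq\mathcal K$ and every $H$-superclass $\mathcal L\subseteq\mathcal K$. But you then say explicitly that this is ``the hard part'' and that the natural double counts are tautological; you sketch where the left-ideal condition $JK\subseteq K$ should enter without actually using it to finish. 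As written, the proposal stops exactly at the point where the content of hypotheses (1) and (2) has to be cashed in, so it is incomplete rather than wrong. If you want to complete it along these lines, you will need a concrete mechanism---for instance a transitive action that moves the $G$-conjugacy classes inside $\mathcal K$ among one another while preserving intersection with $\mathcal L$---and that mechanism must visibly use both the left-ideal property and the injectivity of $H$-superclasses into $G$-superclasses. Alternatively, since the paper treats this as a black box, it would be entirely acceptable to cite \cite{superinduction} as the paper does.
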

 
If we embed $U_{n-1}$ into $U_n$ by 
\[U_{n-1} = \{u \in U_n \mid u_{n-1,n} = 0 \ \text{and}\ u_{i,n-1} = 0 \ \text{for}\ i < n-1 \} \] 
then we have the following corollary.
 
\begin{cor}
Let $U_{n-1} = \{u \in U_n \mid u_{n-1,n} = 0 \ \text{and}\ u_{i,n-1} = 0 \ \text{for}\ i < n-1 \}$.  Then the superinduction any superclass function $\chi$ of $U_{n-1}$ is
$$\SInd_{U_{n-1}}^{U_n}(\chi) = \Ind_{U_{n-1}}^{U_n}(\chi).$$
\end{cor}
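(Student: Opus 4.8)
The plan is to deduce the corollary directly from Proposition \ref{superinduction vs induction}, so the whole task is to verify its two hypotheses for the subgroup $U_{n-1}=\{u\in U_n\mid u_{n-1,n}=0,\ u_{i,n-1}=0\ \text{for}\ i<n-1\}$. First I would record the algebra-group setup: $U_n$ is the algebra group based on the nilpotent $\mathbb{F}_q$-algebra $\mathfrak{u}_n$, and the embedded copy is $U_{n-1}=1+\mathfrak{h}$, where $\mathfrak{h}=\{x\in\mathfrak{u}_n\mid x_{n-1,n}=0,\ x_{i,n-1}=0\ \text{for}\ i<n-1\}$ is the space of strictly upper-triangular matrices whose $(n-1)$st row and column vanish. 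A short index chase shows $\mathfrak{u}_n\,\mathfrak{h}\subseteq\mathfrak{h}$: for $a\in\mathfrak{u}_n$ and $b\in\mathfrak{h}$, every entry $(ab)_{i,n-1}$ with $i<n-1$ vanishes because the whole $(n-1)$st column of $b$ is zero, and $(ab)_{n-1,n}$ vanishes because $a_{n-1,k}=0$ for all $k\le n-1$ while $b_{n,n}=0$. In particular $\mathfrak{h}\,\mathfrak{h}\subseteq\mathfrak{h}$, so $\mathfrak{h}$ is a subalgebra and $U_{n-1}$ is a subalgebra group of $U_n$; and the same computation applied to $x=1+a\in U_n$ and $h=1+b\in U_{n-1}$ gives $x(h-1)+1=1+b+ab\in 1+\mathfrak{h}=U_{n-1}$, which is hypothesis (2).

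For hypothesis (1) --- that no two superclasses of $U_{n-1}$ lie inside a single superclass of $U_n$ --- I would argue at the level of superclass representatives. Superclasses of $U_n$ are the two-sided $B_n$-orbits $1+B_n x B_n$ on $\mathfrak{u}_n$, and, as recalled in Section \ref{Preliminaries}, elementary row and column operations pick out a unique representative of each orbit having at most one $1$ in every row and column. When $x=h-1\in\mathfrak{h}$, its $(n-1)$st row and column are already zero, so all the reductions needed to reach this normal form can be performed using only the elementary operations that avoid row and column $n-1$; these are precisely the two-sided operations for the smaller group on the index set $\{1,\dots,n-2,n\}$. Hence the $B_n$-orbit representative of $h-1$ coincides with its $U_{n-1}$-superclass representative, so any two elements of $U_{n-1}$ lying in the same $U_n$-superclass already lie in the same $U_{n-1}$-superclass. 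With both hypotheses verified, Proposition \ref{superinduction vs induction} gives $\SInd_{U_{n-1}}^{U_n}(\chi)=\Ind_{U_{n-1}}^{U_n}(\chi)$ for every superclass function $\chi$ of $U_{n-1}$.

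The routine parts are the two index chases; the point that needs the most care is hypothesis (1), specifically the claim that the $B_n$-reduction of a matrix supported off row and column $n-1$ never needs to touch that row or column. I would make this precise by tracking the standard reduction algorithm (clearing entries in the lower-left quadrant of each chosen pivot in turn): a zero row or column is invariant under every step and contributes nothing, so running the algorithm on the index set $\{1,\dots,n-2,n\}$ produces the same normal form as on $\{1,\dots,n\}$, which identifies the $U_n$-superclass and the $U_{n-1}$-superclass of any element of $U_{n-1}$. Alternatively one can phrase (1) through the rank invariants $\operatorname{rank}\big((h-1)_{[i,n],[1,j]}\big)$ that classify the $B_n$-orbits on $\mathfrak{u}_n$, observing that deleting the identically zero row and column $n-1$ changes none of these ranks, so the $U_n$-data of $h-1$ already determines its $U_{n-1}$-data and hence its $U_{n-1}$-superclass.
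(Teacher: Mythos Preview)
Your proposal is correct and follows the same strategy as the paper: both verify the two hypotheses of Proposition~\ref{superinduction vs induction} (the Marberg--Thiem criterion). Your index chase for hypothesis~(2) matches the paper's computation, and you additionally make explicit that $U_{n-1}$ is a subalgebra group, which the paper leaves implicit. For hypothesis~(1) the paper simply asserts in one line that ``there is an injective function from superclasses of $U_{n-1}$ to $U_n$,'' whereas you supply the actual content behind this assertion by arguing that the $B_n$-normal form of any $h-1\in\mathfrak{h}$ can be computed without touching the identically zero row and column $n-1$, so the $U_n$-superclass representative coincides with the $U_{n-1}$-superclass representative; this is a genuine improvement in rigor over the paper's treatment.
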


\begin{proof}
It suffices to show the hypotheses of the previous theorem hold.  Because there is an injective function from superclasses of $U_{n-1}$ to $U_n$ then no two superclasses of $U_{n-1}$ are in the same superclass of $U_n$.

Let $x \in U_n$, $h \in U_{n-1}$ and $u = x(h-1) +1$.  Since $h_{i, n-1}-1 = 0$ we have $u_{i, n-1} = 0$ for $i < n-1$.  Similarly, $x_{n-1, j} = 0 $ for $j < n-1$ and $h_{n-1, j}-1 = 0$ for $j \geq n-1$ implies $u_{n-1,n} = 0$.  This shows $u \in U_{n-1}$.  Therefore, $\text{SInd}_{U_{n-1}}^{U_n}(\chi) = \text{Ind}_{U_{n-1}}^{U_n}(\chi)$ for any superclass function $\chi$ of $U_{n-1}$ by Proposition \ref{superinduction vs induction}.
\end{proof}

Unlike in the representation theory of the symmetric group, the decomposition of induced characters depends on the embedding of $U_{n-1}$ into $U_n$.  If we instead consider right modules, then superinduction is equivalent to induction for the following embeddings
\[U_{n-1} = \{u \in U_n \mid (u-1)_{ij} \neq 0 \ \text{implies} \ 1<i<j  \} \] 
and
\[U_{n-1} = \{u \in U_n \mid u_{1,2} = 0 \ \text{and}\ u_{2,j} = 0 \ \text{for}\ 2 < j  \} \] 
\cite[Section 3.1]{superinduction}.  However, it is not known if superinduction is the same as induction for other embeddings.  In our case we use the embedding of $U_{n-1}\subseteq U_n$ obtained by removing the last column so that superinduction is in fact induction.  

We now derive a corresponding formula for induction from restriction.

\begin{cor}\label{induction}
For $\mu \vdash [n-1]$, the induction $\Ind_{U_{n-1}}^{U_n} (\chi^\mu)$ is given by
\[\Ind_{U_{n-1}}^{U_n} (\chi^\mu) = \sum_{\lambda \, \vdash [n]} d_\mu^\lambda \chi^{\lambda},\]
where
\begin{equation*}
d_\mu^\lambda =\left\{\begin{array}{ll}
\delta_{\lambda \mu} &  \text{if}\ n\notin re(\lambda) \\
\displaystyle \frac{ t^{|\mu-\lambda|} q^{\crs(\mu-\lambda, \lambda \cap \mu) }}{q^{\crs(\lambda- \mu,\lambda \cap \mu)}} & \text{if}\ \mu \in C^{\lambda- \{i \frown n\}, i \frown n} \\
 0 & \text{otherwise.}\end{array}\right.
\end{equation*}
\end{cor}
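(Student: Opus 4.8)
The plan is to deduce the induction coefficients $d_\mu^\lambda$ from the restriction coefficients $c_\mu^\lambda$ of Corollary \ref{restriction} by Frobenius reciprocity, using that for the embedding obtained by deleting the last column superinduction coincides with induction (Marberg--Thiem, \cite{superinduction}, as recorded above). First I would note that $\Ind_{U_{n-1}}^{U_n}(\chi^\mu)=\SInd_{U_{n-1}}^{U_n}(\chi^\mu)$ is a superclass function of $U_n$ (superinduction is visibly constant on superclasses), so by Proposition \ref{superorthogonality} — which shows the supercharacters are pairwise orthogonal, hence linearly independent — it has a unique expansion $\sum_{\lambda\vdash[n]}d_\mu^\lambda\chi^\lambda$. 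Pairing with $\chi^\lambda$ and applying Proposition \ref{superinduction frobenius reciprocity}, Corollary \ref{restriction}, and orthogonality gives
\[
d_\mu^\lambda\,\langle\chi^\lambda,\chi^\lambda\rangle_{U_n}=\langle\chi^\mu,\Res_{U_{n-1}}^{U_n}(\chi^\lambda)\rangle_{U_{n-1}}=c_\mu^\lambda\,\langle\chi^\mu,\chi^\mu\rangle_{U_{n-1}},
\]
so $d_\mu^\lambda=c_\mu^\lambda\,\langle\chi^\mu,\chi^\mu\rangle_{U_{n-1}}/\langle\chi^\lambda,\chi^\lambda\rangle_{U_n}$.

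Next I would substitute the explicit values $\langle\chi^\mu,\chi^\mu\rangle=t^{|\mu|}q^{\crs(\mu,\mu)}$ and $\langle\chi^\lambda,\chi^\lambda\rangle=t^{|\lambda|}q^{\crs(\lambda,\lambda)}$ from Proposition \ref{superorthogonality}. If $n\notin re(\lambda)$ then $c_\mu^\lambda=\delta_{\lambda\mu}$ (Corollary \ref{restriction}), and since $|\cdot|$ and $\crs(\cdot,\cdot)$ do not depend on the ambient index set, the ratio of inner products is $1$ and $d_\mu^\lambda=\delta_{\lambda\mu}$. If $n\in re(\lambda)$, say $i\frown n\in\lambda$, then for $\mu\in C^{\lambda-\{i\frown n\},\,i\frown n}$ I substitute $c_\mu^\lambda=t^{|\lambda-\mu|}q^{\crs(\lambda\cap\mu,\lambda-\mu)}/q^{\crs(\lambda\cap\mu,\mu-\lambda)}$ and the target formula reduces to two exponent identities. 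The power of $t$ becomes $|\lambda-\mu|+|\mu|-|\lambda|$, which equals $|\mu-\lambda|$ at once from $|\mu|=|\lambda\cap\mu|+|\mu-\lambda|$ and $|\lambda|=|\lambda\cap\mu|+|\lambda-\mu|$. For the power of $q$ I would expand $\crs(\mu,\mu)$ and $\crs(\lambda,\lambda)$ via the bilinearity relation (\ref{crossingidentity}) applied to $\mu=(\lambda\cap\mu)\cup(\mu-\lambda)$ and $\lambda=(\lambda\cap\mu)\cup(\lambda-\mu)$; the common term $\crs(\lambda\cap\mu,\lambda\cap\mu)$ cancels and the total $q$-exponent collapses to
\[
\crs(\mu-\lambda,\lambda\cap\mu)-\crs(\lambda-\mu,\lambda\cap\mu)+\crs(\mu-\lambda,\mu-\lambda)-\crs(\lambda-\mu,\lambda-\mu).
\]

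The crux, and the step I expect to be the main obstacle, is then showing $\crs(\mu-\lambda,\mu-\lambda)=\crs(\lambda-\mu,\lambda-\mu)$, after which the exponent is exactly $\crs(\mu-\lambda,\lambda\cap\mu)-\crs(\lambda-\mu,\lambda\cap\mu)$ and the claimed formula for $d_\mu^\lambda$ follows. I would prove this by the shell structure: since $\mu\in C^{\lambda-\{i\frown n\},\,i\frown n}$, the arcs of $\lambda-\mu$ together with those of $\mu-\lambda$ form a shell $\bigcup_r\{i_r\frown l_r\}\cup\bigcup_r\{i_r\smile l_{r+1}\}$ in the sense of Definition \ref{simpleshell}, with $i_1<\cdots<i_s\le l_{s'}<\cdots<l_1=n$, and the two families (the one containing $i\frown n$ lying in $\lambda-\mu$) are exactly $\lambda-\mu$ and $\mu-\lambda$. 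Within a single family, two arcs $i_a\frown l_a$ and $i_b\frown l_b$ with $a<b$ satisfy $i_a<i_b<l_b<l_a$, so they nest rather than cross; hence $\crs(\lambda-\mu,\lambda-\mu)=\crs(\mu-\lambda,\mu-\lambda)=0$ and the identity holds. The bookkeeping to be careful about is which of $\lambda-\mu$, $\mu-\lambda$ is which family, and checking the small shells (half a whorl, one whorl, and $s'=s$ versus $s'=s+1$) are not exceptions. Finally, when $n\in re(\lambda)$ but $\mu\notin C^{\lambda-\{i\frown n\},\,i\frown n}$ we have $c_\mu^\lambda=0$, forcing $d_\mu^\lambda=0$, which is the remaining ``otherwise'' case.
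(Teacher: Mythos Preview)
Your proposal is correct and follows the same route as the paper: Frobenius reciprocity (Proposition~\ref{superinduction frobenius reciprocity}) combined with the orthogonality of supercharacters (Proposition~\ref{superorthogonality}) to convert $c_\mu^\lambda$ into $d_\mu^\lambda$, followed by bilinearity of $\crs$ to simplify the $q$-exponent. In fact your handling of the $q$-exponent is more careful than the paper's: you correctly isolate the residual term $\crs(\mu-\lambda,\mu-\lambda)-\crs(\lambda-\mu,\lambda-\mu)$ and prove it vanishes using the nested structure of a shell, whereas the paper passes through an intermediate expression that treats $\crs$ as if it were literally subtractive in each argument (the final ratio is still correct because the $\crs(\lambda\cap\mu,\lambda\cap\mu)$ contributions cancel between numerator and denominator, and the self-crossings of $\lambda-\mu$ and $\mu-\lambda$ are indeed zero).
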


\begin{proof}

Let $\lambda \vdash [n]$ and $\mu \vdash [n-1]$.  Frobenius reciprocity, Proposition \ref{superinduction frobenius reciprocity}, shows
$$\langle \chi^\lambda, \SInd_{U_{n-1}}^{U_n} (\chi^\mu)\rangle_{U_n} = \langle \Res_{U_{n-1}}^{U_n}(\chi^\lambda), \chi^\mu \rangle_{U_{n-1}}.$$
Thus if
$$\Ind_{U_{n-1}}^{U_n} (\chi^\mu) = \sum_\gamma d^\gamma_\mu \chi^\gamma \quad \text{and} \quad  \Res_{U_{n-1}}^{U_n}(\chi^\lambda) = \sum_\nu c^\lambda_\nu \chi^\nu$$
then the inner product, Proposition \ref{superorthogonality}, yields
$$q^{\crs(\lambda,\lambda)} t^{|\lambda|} d^\lambda_\mu = q^{\crs(\mu,\mu)} t^{|\mu|} c^\lambda_\mu.$$
Therefore, the coefficient $d^\lambda_\mu$ is
\begin{eqnarray*}
d^\lambda_\mu &=&\frac{t^{|\mu|-|\lambda|} q^{\crs(\mu,\mu)}}{q^{\crs(\lambda,\lambda)}} c^\lambda_\mu \\
&=&\frac{t^{|\mu|-|\lambda|} q^{\crs(\mu,\mu)}}{q^{\crs(\lambda,\lambda)}}\cdot \frac{ t^{|\lambda-\mu|} q^{\crs(\lambda \cap \mu, \lambda-\mu) }}{q^{\crs(\lambda \cap \mu,\mu-\lambda)}}\\
&=&\frac{t^{|\mu-\lambda|} q^{\crs(\mu,\mu)-\crs(\lambda \cap \mu, \mu-\lambda)}}{q^{\crs(\lambda,\lambda)-\crs(\lambda \cap \mu, \lambda-\mu)}}
\end{eqnarray*}
since $|\mu-\lambda| = |\mu|-|\lambda|+|\lambda-\mu|$.  From the crossing number equation (\ref{crossingidentity}) we obtain

\begin{eqnarray*}
d^\lambda_\mu &=&\frac{t^{|\mu-\lambda|} q^{\crs(\mu-(\lambda \cap \mu),\mu- (\mu-\lambda))}}{q^{\crs(\lambda-(\lambda \cap \mu),\lambda-(\lambda-\mu))}}\\
&=&   \frac{ t^{|\mu-\lambda|} q^{\crs(\mu-\lambda, \lambda \cap \mu) }}{q^{\crs(\lambda- \mu,\lambda \cap \mu)}}.
\end{eqnarray*}\end{proof}

Together Corollaries \ref{restriction} and \ref{induction} for decomposing restricted and induced supercharacters are known as \textit{branching rules}, which we restate due to their importance.

\begin{thm}[Branching Rules]\label{branching rules}
For $\lambda \vdash [n]$, the restriction $\Res_{U_{n-1}}^{U_n} (\chi^\lambda)$ is given by
$$\Res_{U_{n-1}}^{U_n} (\chi^\lambda) = \sum_{\mu\, \vdash [n-1]} c_\mu^\lambda \chi^{\mu}$$
where
\begin{equation*}
c_\mu^\lambda =\left\{\begin{array}{ll}
\delta_{\lambda \mu} &  \text{if}\ n\notin re(\lambda), \\
\displaystyle \frac{ t^{|\lambda-\mu|} q^{\crs(\lambda \cap \mu, \lambda-\mu) }}{q^{\crs(\lambda \cap \mu,\mu-\lambda)}} & \text{if}\ \mu \in C^{\lambda- \{i \frown n\}, i \frown n}, \\
 0 & \text{otherwise.}\end{array}\right.
 \end{equation*}
 For $\mu \vdash [n-1]$, the induction $\Ind_{U_{n-1}}^{U_n} (\chi^\mu)$ is given by
\[\Ind_{U_{n-1}}^{U_n} (\chi^\mu) = \sum_{\lambda \, \vdash [n]} d_\mu^\lambda \chi^{\lambda},\]
where
\begin{equation*}
d_\mu^\lambda =\left\{\begin{array}{ll}
\delta_{\lambda \mu} &  \text{if}\ n\notin re(\lambda) \\
\displaystyle \frac{ t^{|\mu-\lambda|} q^{\crs(\mu-\lambda, \lambda \cap \mu) }}{q^{\crs(\lambda- \mu,\lambda \cap \mu)}} & \text{if}\ \mu \in C^{\lambda- \{i \frown n\}, i \frown n} \\
 0 & \text{otherwise.}\end{array}\right.
\end{equation*}
\end{thm}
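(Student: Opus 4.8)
This statement merely consolidates Corollaries \ref{restriction} and \ref{induction}, so the plan is to assemble it from those two results; I sketch their proofs in turn.

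\textbf{Restriction.} I would begin from the arc factorization $\chi^\lambda = \bigodot_{i \frown l \in \lambda} \chi^{i \frown l}$ of (\ref{arctensor}) and commute $\Res^{U_n}_{U_{n-1}}$ past the tensor product. By Proposition \ref{restrictionarc}, every arc $i \frown l$ with $l \neq n$ is fixed, while the unique arc $i \frown n$ ending at the deleted node (if it exists) becomes $\chi^{i \frownx n}$; when $n \notin re(\lambda)$ nothing changes, giving $c^\lambda_\mu = \delta_{\lambda\mu}$. Hence $\Res^{U_n}_{U_{n-1}}(\chi^\lambda) = \chi^{\lambda - \{i\frown n\}} \odot \chi^{i \frownx n}$, and since $\lambda$ has no conflicting arcs we have $i \notin le(\lambda - \{i\frown n\})$ and $(\lambda - \{i\frown n\}) \cup \{i\frown n\} = \lambda$, so Theorem \ref{branchingrules} expands this as $\sum_{\mu \in C^{\lambda-\{i\frown n\}, i\frown n}} c^{\lambda-\{i\frown n\}, i\frown n}_\mu \chi^\mu$ with exactly the stated coefficient.

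\textbf{The tensor-product identity.} The substance is Theorem \ref{branchingrules}, which I would prove by induction on the width $l-i$. The base case $l-i=1$ is immediate: $\chi^{i\frownx l} = t\mathbbm{1}$ and $C^{\lambda, i\frown l} = \{\lambda\}$. For the inductive step, expand $\chi^{i\frownx l} = t\bigl(\mathbbm{1} + \sum_{i<k<l}\chi^{i\frown k}\bigr)$ and partition the inner sum according to whether $k \notin re(\lambda)$, or $k$ is the right endpoint of some $h\frown k \in \lambda$ with $h<i$, or of some $j\frown k\in\lambda$ with $i<j$. Proposition \ref{tensorarcs} rewrites $\chi^\lambda \odot \chi^{i\frown k}$ in these three cases as $\chi^{\lambda\cup\{i\frown k\}}$, or $\chi^\lambda \odot \chi^{i\frownx k}$, or $\chi^{\lambda|_{j\mapsto i}} \odot \chi^{j\frownx k}$, the latter two of strictly smaller width so the induction hypothesis applies. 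One then collects the coefficient of each target $\chi^\mu$: Lemma \ref{shellset} identifies the resulting index set as $C^{\lambda, i\frown l}$, Lemma \ref{coefficient} factors each shell coefficient into its outer-whorl part times its inner-whorl part, and the telescoping $q$-analog identity of Lemma \ref{crossing} collapses the geometric sums so that the accumulated coefficient equals $c^{\lambda, i\frown l}_\mu$. The crossing-number additivity (\ref{crossingidentity}) is what makes the $q$-exponents separate cleanly; verifying that the three partial sums reassemble into precisely $c^{\lambda, i\frown l}_\mu$ with no surplus terms is the main obstacle.

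\textbf{Induction.} Here I would first invoke the corollary above to replace $\Ind^{U_n}_{U_{n-1}}$ by $\SInd^{U_n}_{U_{n-1}}$ for the ``delete the last column'' embedding, then apply Frobenius reciprocity (Proposition \ref{superinduction frobenius reciprocity}) to get $\langle \chi^\lambda, \SInd^{U_n}_{U_{n-1}}(\chi^\mu)\rangle_{U_n} = \langle \Res^{U_n}_{U_{n-1}}(\chi^\lambda), \chi^\mu\rangle_{U_{n-1}}$. Super-orthogonality (Proposition \ref{superorthogonality}) then reads off $q^{\crs(\lambda,\lambda)} t^{|\lambda|} d^\lambda_\mu = q^{\crs(\mu,\mu)} t^{|\mu|} c^\lambda_\mu$. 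Substituting the restriction coefficient, using $|\mu-\lambda| = |\mu| - |\lambda| + |\lambda-\mu|$, and applying (\ref{crossingidentity}) twice more to rewrite $\crs(\lambda,\lambda) - \crs(\lambda\cap\mu, \lambda-\mu)$ as $\crs(\lambda-\mu, \lambda\cap\mu)$ and $\crs(\mu,\mu) - \crs(\lambda\cap\mu,\mu-\lambda)$ as $\crs(\mu-\lambda, \lambda\cap\mu)$ yields the stated $d^\lambda_\mu$. This last computation is purely formal once the restriction side is established.
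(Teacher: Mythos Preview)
Your proposal is correct and follows essentially the same approach as the paper: the theorem is indeed just the union of Corollaries \ref{restriction} and \ref{induction}, and your sketches of those corollaries and of the underlying Theorem \ref{branchingrules} match the paper's arguments step for step, invoking the same ingredients (arc factorization, Propositions \ref{restrictionarc} and \ref{tensorarcs}, Lemmas \ref{shellset}, \ref{coefficient}, \ref{crossing}, Frobenius reciprocity, and super-orthogonality) in the same order. One minor note: the fact that $\SInd = \Ind$ for the ``delete the last column'' embedding is quoted in the paper directly from \cite{superinduction} rather than from the nearby corollary (which treats a different embedding), but this does not affect the argument.
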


\noindent Since the branching rules are simple and easily computable, we are submiting code for a program in SageMath that takes a supercharacter as input and outputs these restriction and induction decompositions.  This enables us to quickly compute meaningful examples of restricting and inducing a supercharacter multiple times.  While these formulas allow us to better understand restriction and induction, they are also useful for Schur--Weyl duality.

\section{Shell Tableaux}

We use the branching rules to create a graph known as the Bratteli diagram.  For the symmetric group, paths in the Bratteli diagram are indexed by a set of combinatorial objects called Young tableaux \cite{Mac}.  Building from the combinatorics of the previous section, we create an analog of Young tableaux known as shell tableaux and construct a bijection between shell tableaux and paths in the Bratteli diagram.  

For $k \in \mathbb{Z}_{\geq1}$, consider
\[V^{ k} = \underbrace{(\Ind_{U_{n-1}}^{U_n} \Res_{U_{n-1}}^{U_n})}_{k{\rm \  times}}\hspace{-1mm}{}^k(\mathbbm{1}) \]
where $\mathbbm{1}$ is the trivial supercharacter of $U_n$ that is restricted and induced $k$ times.  This is reminiscent of the situation in the partition algebra where the permutation representation of the symmetric group is isomorphic to restricting and then inducing the trivial character.  Note that if $V= \mathbb{C}U_n \otimes_{\mathbb{C}U_{n-1}} \mathbbm{1}$ then \[V= \mathbb{C}U_n \otimes_{\mathbb{C}U_{n-1}} \Res^{U_n}_{U_{n-1}}(\mathbbm{1}) = \Ind_{U_{n-1}}^{U_n} \Res_{U_{n-1}}^{U_n}(\mathbbm{1}) \] 
by the definition of induction.  More broadly, we have the following generalization of the tensor identity from \cite[(3.18)]{Partition}.

\begin{lemma}\label{isomorphism}
Let $H$ be a subgroup of a group $G$.  For a $G$-module $M$, the map
\begin{eqnarray*}
 \tau: \mathbb{C}G \otimes_{\mathbb{C}H} \Res^G_H(M) &\longrightarrow& (\mathbb{C}G\otimes_{\mathbb{C}H} \mathbbm{1}) \otimes M\\
 g \otimes m &\mapsto& (g \otimes \mathbbm{1} ) \otimes gm\\
 g \otimes g^{-1}m &\mapsfrom& (g \otimes \mathbbm{1}) \otimes m
 \end{eqnarray*}
 is a $G$-module isomorphism.
\end{lemma}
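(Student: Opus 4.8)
The plan is to check by hand that $\tau$ is a well-defined $\mathbb{C}G$-module homomorphism, and then to verify that the indicated rule $(g\otimes\mathbbm 1)\otimes m\mapsto g\otimes g^{-1}m$ furnishes a two-sided inverse; $G$-equivariance together with bijectivity then gives the isomorphism.

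First I would verify $\tau$ is well-defined. The source $\mathbb{C}G\otimes_{\mathbb{C}H}\Res^G_H(M)$ is $\mathbb{C}G\otimes_{\mathbb{C}}M$ modulo the balancing relations $gh\otimes m=g\otimes hm$ for $h\in H$, so it suffices to see that the assignment $g\otimes m\mapsto(g\otimes\mathbbm 1)\otimes gm$ respects these. Indeed $gh\otimes m$ is sent to $(gh\otimes\mathbbm 1)\otimes(gh)m$ while $g\otimes hm$ is sent to $(g\otimes\mathbbm 1)\otimes g(hm)$, and these agree because $gh\otimes\mathbbm 1=g\otimes\mathbbm 1$ in $\mathbb{C}G\otimes_{\mathbb{C}H}\mathbbm 1$ (the factor $h$ acts trivially on $\mathbbm 1$) and $(gh)m=g(hm)$. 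Since the formula is $\mathbb{C}$-bilinear in the pair $(g,m)$ it descends to a linear map $\tau$. For $\mathbb{C}G$-equivariance, the target carries the diagonal action $x\cdot\big((g\otimes\mathbbm 1)\otimes m\big)=(xg\otimes\mathbbm 1)\otimes xm$, and then for $x\in G$ one computes $\tau(xg\otimes m)=(xg\otimes\mathbbm 1)\otimes(xg)m=x\cdot\big((g\otimes\mathbbm 1)\otimes gm\big)=x\cdot\tau(g\otimes m)$, as required.

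Next I would build the inverse $\sigma$. One may either fix a set $T$ of left-coset representatives for $H$ in $G$, so that $\{t\otimes\mathbbm 1\mid t\in T\}$ is a $\mathbb{C}$-basis of $\mathbb{C}G\otimes_{\mathbb{C}H}\mathbbm 1$ and hence $\{(t\otimes\mathbbm 1)\otimes e_i\mid t\in T,\ e_i\ \text{in a basis of }M\}$ a basis of the target, and set $\sigma\big((t\otimes\mathbbm 1)\otimes m\big)=t\otimes t^{-1}m$; or one checks directly that the rule $(g\otimes\mathbbm 1)\otimes m\mapsto g\otimes g^{-1}m$ is insensitive to the choice of representative, since replacing $g$ by $gh$ with $h\in H$ gives $gh\otimes(gh)^{-1}m=gh\otimes h^{-1}(g^{-1}m)=(gh)h^{-1}\otimes g^{-1}m=g\otimes g^{-1}m$ by the balancing relation ($h^{-1}\in H$). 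A short computation on generators then shows $\sigma\tau(g\otimes m)=g\otimes g^{-1}(gm)=g\otimes m$ and $\tau\sigma\big((g\otimes\mathbbm 1)\otimes m\big)=(g\otimes\mathbbm 1)\otimes g(g^{-1}m)=(g\otimes\mathbbm 1)\otimes m$, so $\tau$ is a bijection, and being a $\mathbb{C}G$-module map it is a $\mathbb{C}G$-module isomorphism.

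There is no deep obstacle here; the only points that require care are the bookkeeping of the three module structures in play --- the $\mathbb{C}H$-balancing in the source, the diagonal $G$-action on the target, and the implicit use that $m\mapsto gm$ is a bijection of $M$ (so that $\tau$ carries a basis to a basis) --- together with confirming that the inverse formula is unambiguous modulo the choice of coset representatives.
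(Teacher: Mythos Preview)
Your proof is correct and complete. The paper does not supply its own proof of this lemma: it simply states the result as a generalization of the tensor identity from \cite{Partition}, equation~(3.18), and moves on to apply it. Your direct verification of well-definedness, $G$-equivariance, and the explicit two-sided inverse is exactly the standard argument one would give, and there is nothing to compare it against in the paper itself.
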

\noindent Iterating this identity, we obtain
\[V^k \cong V^{\otimes k}. \] 

Let
\begin{eqnarray*}
 \hat{Z}_k &=& \big\{\lambda \vdash [n] \mid \Hom_{U_n}(U_n^\lambda, V^k) \neq \emptyset \big\} \\
\hat{Z}_{k+\frac{1}{2}} &=& \big\{\mu \vdash [n-1]\mid \Hom_{U_{n-1}}(U_{n-1}^\mu, \Res_{U_{n-1}}^{U_n}(V^{ k}))\neq \emptyset \big\}.
\end{eqnarray*}
The \textit{Bratteli diagram} $\Lambda(n)$ is the graph with
\begin{enumerate}[label={(\alph*)}]
\item vertices $\{(\lambda, k) \mid k \in \mathbb{Z}_{\geq 0}, \lambda \in \hat{Z}_k \} \cup \{(\mu, k+\frac{1}{2}) \mid k \in \mathbb{Z}_{\geq 0}, \mu \in \hat{Z}_{k+\frac{1}{2}} \}$,
\item an edge $(\lambda,k) \to (\mu ,k+\frac{1}{2})$ if $\langle \Res^{U_n}_{U_{n-1}}(\chi^\lambda), \chi^\mu \rangle \neq 0$,
\item an edge $(\mu,k+\frac{1}{2}) \to (\lambda ,k+1)$ if $\langle \chi^\lambda, \Ind^{U_n}_{U_{n-1}}(\chi^\mu) \rangle \neq 0$ ,
\item an edge labeling $m: E \to \mathbb{Z}_{\geq 1}$ on the set of edges $E$ defined by
\[\begin{array}{ccc}
m((\lambda, k)\to (\mu, k+\frac{1}{2})) &=& \displaystyle{\frac{(q-1)^{|\lambda-\mu|}q^{\crs(\lambda \cap \mu, \lambda-\mu)} }{q^{\crs(\lambda \cap \mu, \mu-\lambda)}}}\\
&&\vspace{-.1in}\\
 m((\mu, k+\frac{1}{2})\to (\lambda, k+1)) &=&  \displaystyle{\frac{(q-1)^{|\mu-\lambda|}q^{\crs(\mu-\lambda,\lambda \cap \mu)} }{q^{\crs(\lambda - \mu, \lambda \cap \mu)}}}.
\end{array}\]

\end{enumerate}
Recall from the branching rules, Theorem \ref{branching rules}, that the edge labeling $m((\lambda, k)\to (\mu, k+\frac{1}{2}))$ is the restriction coefficient which specifies the multiplicity that $\chi^\mu$ appears in $\Res_{U_{n-1}}^{U_n}(\chi^\lambda)$.  Similarly, the edge labeling $m((\lambda, k+\frac{1}{2})\to (\mu, k+1))$ is the induction coefficient which specifies the multiplicity that $\chi^\lambda$ appears in $\Ind_{U_{n-1}}^{U_n}(\chi^\mu)$.

When drawing the Bratteli diagram, we place all the vertices $(\lambda, l)$ in the $l$th row and simply write $\lambda$.  For example, the Bratteli diagram for $\Lambda(3)$ up to row $3$ is 
\[\hspace{-1.2in}
\centerline{\begin{tikzpicture}[xscale=3*.12, yscale=-4.5*.27]
\begin{scope}[black]
	\coordinate (0) at (0,.4); \coordinate (00) at (-20,.4);
	\coordinate (11) at (0,1.2); \coordinate(10) at (-20,1.2);
	\coordinate (31) at (-4, 2.05);\coordinate (32) at (0, 2.05);\coordinate (33) at (4,2.05);
	\coordinate(30) at (-20,2.05); \coordinate(50) at (-20,2.9);
	\coordinate (51) at (-7.8,2.9); \coordinate (52) at (0,2.9);
	\coordinate (71) at (-11.6,3.8); \coordinate (72) at (-7.8,3.8); \coordinate (73) at (0,3.8);
	\coordinate (74) at (-4,3.8); \coordinate (75) at (4,3.8);
	\coordinate(70) at (-20,3.8); \coordinate (80) at (-20,4.7); \coordinate (90) at (-20, 5.6);
	\coordinate (81) at (-7.8,4.7); \coordinate (82) at (0,4.7);
	\coordinate (91) at (-11.5,5.6); \coordinate (92) at (-7.8,5.6); \coordinate (93) at (0,5.6);
	\coordinate (94) at (-4,5.6); \coordinate (95) at (4,5.6);
	\node at (-5.4, 2.5) {\footnotesize{$t$}};
	\node at (-.3, 2.5) {\footnotesize{$t$}};
	\node at (-2.6, 2.5) {\footnotesize{$t$}};
	\node at (-5.4, 3.3) {\footnotesize{$t$}};
	\node at (-5.4, 4.26) {\footnotesize{$t$}};
	\node at (-.3, 4.26) {\footnotesize{$t$}};
	\node at (-2.6, 4.26) {\footnotesize{$t$}};
	\node at (-8.1, 4.26) {\footnotesize{$t$}};
	\node at (-5.4, 5.1) {\footnotesize{$t$}};
\draw (0)--(11);
\draw (11)--(31) (11)--(32) (11)--(33);
\draw (31)--(51) (31)--(52) (32)--(52) (33)--(52);
\draw (51)--(71) (51)--(72) (51)--(74) (52)--(74) (52)--(75)  (52)--(73);
\draw (71)--(81) (73)--(82) (72)--(81) (74)--(81) (74)--(82) (75)--(82);
\draw (82)--(93) (82)--(94) (82)--(95) (81)--(91) (81)--(92) (81)--(94);
\begin{scope}[every node/.style={fill=white}]
	\node at (0) {\Arc};
	\node at (11) {\arc};
	\node at (32) {\ArC};
	\node at (33) {\Arc};
	\node at (31) {\aRC};
	\node at (52) {\arc};
	\node at (51) {\aRc};
	\node at (75) {\Arc};
	\node at (72) {\ARc};
	\node at (73) {\ArC};
	\node at (74) {\aRC};
	\node at (71) {\ARC};
	\node at (82) {\arc};
	\node at (81) {\aRc};
	\node at (95) {\Arc};
	\node at (92) {\ARc};
	\node at (93) {\ArC};
	\node at (94) {\aRC};
	\node at (91) {\ARC};
	\node at (00) {$\scriptsize{k=0}$};
	\node at (10) {$\scriptsize{k=\frac{1}{2}}$};
	\node at (30) {$\scriptsize{k=1}$};
	\node at (50) {\ $\scriptsize{\ k=1\frac{1}{2}}$};
	\node at (70) {$\scriptsize{k=2}$};
	\node at (80) {\ $\scriptsize{\ k=2\frac{1}{2}}$};
	\node at (90) {$\scriptsize{k=3}$};
\end{scope}
\end{scope} 
\end{tikzpicture}}
\]
where $t = q-1$.

A \textit{path} $P$ in the Bratteli diagram $\Lambda(n)$ to $\lambda \in \hat{Z}_k$ is a sequence $P= (\lambda^0, \lambda^{\frac{1}{2}},\ldots, \lambda^{k-\frac{1}{2}}, \lambda^k = \lambda)$ such that for $0 \leq r \leq k-1$,
\begin{enumerate}[label={(\alph*)}]
\item $(\lambda^r , r)$ and $(\lambda^{r+\frac{1}{2}}, r+\frac{1}{2})$ are vertices in $\Lambda(n)$
\item $(\lambda^r,r)\to (\lambda^{r+\frac{1}{2}}, r+\frac{1}{2})$ and $(\lambda^{r+\frac{1}{2}}, r+\frac{1}{2}) \to (\lambda^{r+1}, r+1)$ are edges in $\Lambda(n)$.
\end{enumerate}
For instance, \[\begin{tikzpicture}[scale=1.5]
\node at (-.6,0) {$P = \Big($};
  \draw[fill=black] (0,0) circle (.04);
\draw[fill=black] (.25,0) circle (.04);
\draw[fill=black] (.5,0) circle (.04);
\node at (.75,-.1) {,};
  \draw[fill=black] (1,0) circle (.04);
\draw[fill=black] (1.25,0) circle (.04);
\node at (1.5,-.1) {,};
  \draw[fill=black] (1.75,0) circle (.04);
\draw[fill=black] (2,0) circle (.04);
\draw[fill=black] (2.25,0) circle (.04);
\draw[line width=.15mm] (1.75,0) to[in=120, out=60, circle, looseness=1.5] (2.25,0);
\node at (2.5,-.1) {,};
  \draw[fill=black] (2.75,0) circle (.04);
\draw[fill=black] (3,0) circle (.04);
\draw[line width=.15mm] (2.75,0) to[in=120, out=60, circle, looseness=1.8] (3,0);
\node at (3.25,-.1) {,};
\draw[fill=black] (3.5,0) circle (.04);
\draw[fill=black] (3.75,0) circle (.04);
\draw[fill=black] (4,0) circle (.04);
\draw[line width=.15mm] (3.5,0) to[in=120, out=60, circle, looseness=1.8] (3.75,0);
\draw[line width=.15mm] (3.75,0) to[in=120, out=60, circle, looseness=1.8] (4,0);
\node at (4.25,-.1) {,};
\draw[fill=black] (4.5,0) circle (.04);
\draw[fill=black] (4.75,0) circle (.04);
\draw[line width=.15mm] (4.5,0) to[in=120, out=60, circle, looseness=1.8] (4.75,0);
\node at (5,-.1) {,};
\draw[fill=black] (5.25,0) circle (.04);
\draw[fill=black] (5.5,0) circle (.04);
\draw[fill=black] (5.75,0) circle (.04);
\draw[line width=.15mm] (5.25,0) to[in=120, out=60, circle, looseness=1.8] (5.5,0);
\node at (6,0) {\Big)};
\end{tikzpicture}\]
is a path in $\Lambda(3)$.

Taking the edge labeling into account, we say the \textit{weight} $\wt(P)$ of a path $P$ is the product 
\[\prod_{r=1}^{k-1} m((\lambda^r,r)\to (\lambda^{r+\tfrac{1}{2}}, r+\tfrac{1}{2}))m((\lambda^{r+\frac{1}{2}},r+\tfrac{1}{2})\to (\lambda^{r+1}, r+1))\]
of its edge labels.  The sum of the weights of the paths to $\lambda\in \hat{Z}_k$ is the multiplicity that $\chi^\lambda$ appears in $V^k$.  The path given above has weight $t^2$ since $m((\lambda^1,1) \to (\lambda^{1\frac{1}{2}}, \frac{1}{2})) = t$ and $m((\lambda^2,2) \to (\lambda^{2 \frac{1}{2}}, \frac{1}{2})) = t$.

Let $\mathcal{P}_k(\lambda)$ be the set of paths in $\Lambda(n)$ to $\lambda\in \hat{Z}_k$.  There is a combinatorial way to encode paths in $\mathcal{P}_k(\lambda)$ using a generalization of shells.

\begin{defn}\label{generalizedshell}
Let $s^\prime \in \{ s, s+1\}$ for $s \in \mathbb{Z}_{\geq 1}$ and $1 \leq i \leq l \leq n$.  A \textit{generalized shell} of \textit{width} $l-i$ is a set of arcs on $n$ nodes of the form
\[ \bigcup_{r=1}^s \{j \frown \min L_r\mid j \in I_r\} \cup \bigcup_{r=1}^{s^\prime-1} \{\max I_r \smile m\mid m \in L_{r+1} \} \]
where $I_r, L_r \subseteq [n]$ with $\{i\} = I_1 < \cdots < I_s \leq L_{s^\prime} < \cdots < L_1 = \{l\}$.
\end{defn}

\noindent For subsets $I, L \subseteq [n]$ we say $I < L$ if $i <l$ for each $i \in I$ and $l \in L$.  If $\max I = \min L$, we say $I \leq L$.  It follows that a generalized shell with $|I_r|=1$ and $|L_r|=1$ for all $r$ is simply a shell in the sense of Definition \ref{simpleshell}.  Some generalized shells of size $6$ and width $6-2$ are
\[ \begin{tikzpicture}
 \draw[fill=black] (-1,0) circle (.07);
 \draw[fill=black] (-.5,0) circle (.07);
\draw[fill=black] (.0,0) circle (.07);
\draw[fill=black] (.5,0) circle (.07);
\draw[fill=black] (1,0) circle (.07);
\draw[fill=black] (1.5,0) circle (.07);
\draw[line width=.2mm] (-.5,0) to[in=-110, out=-70, circle, looseness=1.6] (0,0);
\draw[line width=.2mm] (-.5,0) to[in=110, out=70, circle, looseness=1.35] (1.5,0);
\draw[line width=.2mm] (-.5,0) to[in=-110, out=-70, circle, looseness=1.4] (.5,0);
\end{tikzpicture}\qquad\qquad\qquad\qquad\qquad\begin{tikzpicture}
 \draw[fill=black] (-1,0) circle (.07);
 \draw[fill=black] (-.5,0) circle (.07);
\draw[fill=black] (.0,0) circle (.07);
\draw[fill=black] (.5,0) circle (.07);
\draw[fill=black] (1,0) circle (.07);
\draw[fill=black] (1.5,0) circle (.07);
\draw[line width=.2mm] (.5,0) to[in=110, out=70, circle, looseness=1.6] (1,0);
\draw[line width=.2mm] (0,0) to[in=110, out=70, circle, looseness=1.4] (1,0);
\draw[line width=.2mm] (-.5,0) to[in=110, out=70, circle, looseness=1.35] (1.5,0);
\draw[line width=.2mm] (-.5,0) to[in=-110, out=-70, circle, looseness=1.4] (1,0);
\end{tikzpicture}\]\vspace{-.3in}
\[\qquad\quad\{2 \frown 6\} \cup \{2\smile m \mid m \in \{3, 4\} \}, \qquad\qquad \{2 \frown 6\}\cup \{j \frown 5 \mid j \in \{3,4\} \} \cup \{ 2 \smile 5 \}.\]

A \textit{labeled shell} is a pair $(\varsigma, \tau)$ for a generalized shell $\varsigma$ and a map $\tau: \varsigma \to \mathbb{Z}_{\geq 0}$.  We say the labeling $\tau$ is \textit{strict} if every pair of arcs $(i,l), (j,m) \in \varsigma$ with $\dim(i,j) > \dim(j,m)$ satisfies $\tau(i,j) < \tau(j,m)$, and $\tau(j,m) \neq \tau (i,l)+1$ if $i=j$ or $l=m$.  If $\tau(i,j) = a$, we write the labeled arc as $(i,j;a)$.  When the orientation of the arc is specified we write $(i \frown j;a)$ or $(i \smile j;a)$.  For example, in the case of the shell 

\[
\begin{tikzpicture}[scale=1.85]
\draw[fill=black] (1.75,0) circle (.04);
\draw[fill=black] (2,0) circle (.04);
\draw[fill=black] (2.25,0) circle (.04);
\draw[fill=black] (2.5,0) circle (.04);
\draw[fill=black] (2.75,0) circle (.04);
\draw[fill=black] (3,0) circle (.04);
\draw[line width=.15mm] (2,0) to[in=110, out=70, circle, looseness= 1.4] (3,0);
\draw[line width=.15mm] (2,0) to[in=-110, out=-70, circle, looseness= 1.4] (2.75,0);
\draw[line width=.15mm] (2.25,0) to[in=110, out=70, circle, looseness= 1.4] (2.75,0);
\node at (2.55,.55) {\scriptsize{3}};
\node at (2.375,-.45) {\scriptsize{4}};
\node at (2.7,.23) {\scriptsize{6}};
\end{tikzpicture}\]\vspace{-.3in}
\[ \{(2\frown 6; 3), (3\frown 5; 6), (2 \smile 5; 4)\}.\]
From strictly labeled shells, we define the key notion shell tableaux.

 \begin{defn}\label{tableaux}
A \textit{shell tableau} $T = (\varsigma^1, \ldots, \varsigma^k)$ of \textit{length} $k$ is a sequence of strictly labeled shells $\varsigma^r$ of size $n$ and width $n-i_r$ such that 
\begin{enumerate}
\item for $1 \leq r < k, \varsigma^r = \{ (n \frown n ; a)\}$ or $|\varsigma^r| \geq 2$, and $\varsigma^k = \{ (i_k \frown n; a)\}$;
\item each arc has a distinct label in $\{1, 2, \ldots, \sum_{r=1}^k |\varsigma^r| \}$;
\item the two smallest labels of each labeled shell $\varsigma^r$ are less than the smallest label in $\varsigma^{r+1}$;
\item for $l \neq m$ and $i<j \leq \min \{l,m\}$, if $(i,l;a) \in \varsigma^{r_l} $ then there exists a minimal $b>a$ such that $(i,m;b) \in \varsigma^{r_m}$ if and only if $(j,\min\{l,m\};b+1)\in \varsigma^{r_{\min\{l,m\}}}$;
\item for $i \neq j$ and $\max \{i,j\}\leq l <m$, if $(j,m;a)\in \varsigma^{r_j}$ then there exists a minimal $b>a$ such that $(i,m;b)\in \varsigma^{r_i}$ if and only if $(\max\{i,j\},l;b+1)\in \varsigma^{r_{\max\{i,j\}}}$.
\end{enumerate}

\end{defn}
\noindent  Conditions 1--3 provide the basic set up of the shells and labeling that are analogous to the condition of increasing entires along the rows and columns in standard Young tableaux.  Intuitively conditions $4$ and $5$ say a strictly labeled shell in a shell tableau has inner whorls if and only if its outer whorl conflicts with the outer whorl of another shell.  As an example consider the tableau \[
\begin{tikzpicture}
\node at (-.75,0) {$T$= \Bigg(};
  \draw[fill=black] (0,0) circle (.04);
\draw[fill=black] (.25,0) circle (.04);
\draw[fill=black] (.5,0) circle (.04);
\draw[fill=black] (.75,0) circle (.04);
\draw[fill=black] (1,0) circle (.04);
\draw[fill=black] (1.25,0) circle (.04);
\draw[line width=.15mm] (0,0) to[in=110, out=70, circle, looseness=1.3] (1.25,0);
\draw[line width=.15mm] (0,0) to[in=-110, out=-70, circle, looseness=1.4] (.75,0);
\node at (.625,.6) {\tiny{1}};
\node at (.375,-.45) {\tiny{2}};
\node at (1.5,0) {,};
\draw[fill=black] (1.75,0) circle (.04);
\draw[fill=black] (2,0) circle (.04);
\draw[fill=black] (2.25,0) circle (.04);
\draw[fill=black] (2.5,0) circle (.04);
\draw[fill=black] (2.75,0) circle (.04);
\draw[fill=black] (3,0) circle (.04);
\draw[line width=.15mm] (2,0) to[in=110, out=70, circle, looseness= 1.4] (3,0);
\draw[line width=.15mm] (2,0) to[in=-110, out=-70, circle, looseness= 1.4] (2.75,0);
\draw[line width=.15mm] (2.25,0) to[in=110, out=70, circle, looseness= 1.4] (2.75,0);
\node at (2.55,.55) {\tiny{3}};
\node at (2.375,-.45) {\tiny{4}};
\node at (2.7,.23) {\tiny{6}};
\node at (3.25,-.1) {,};
\draw[fill=black] (3.5,0) circle (.04);
\draw[fill=black] (3.75,0) circle (.04);
\draw[fill=black] (4,0) circle (.04);
\draw[fill=black] (4.25,0) circle (.04);
\draw[fill=black] (4.5,0) circle (.04);
\draw[fill=black] (4.75,0) circle (.04);
\draw[line width= .15mm] (3.75,0) to[in=110, out=70, circle, looseness= 1.4] (4.75,0);
\draw[line width= .15mm] (3.75,0) to[in=-110, out=-70, circle, looseness= 1.7] (4,0);
\node at (4.25, .55) {\tiny{5}};
\node at (3.875, -.25) {\tiny{7}};
\node at (5,-.1) {,};
\draw[fill=black] (5.25,0) circle (.04);
\draw[fill=black] (5.5,0) circle (.04);
\draw[fill=black] (5.75,0) circle (.04);
\draw[fill=black] (6,0) circle (.04);
\draw[fill=black] (6.25,0) circle (.04);
\draw[fill=black] (6.5,0) circle (.04);
\draw[line width=.15mm] (6.5,0) to[in=50, out=130, loop] (6.5,0);
\node at (6.5, .4) {\tiny{8}};
\node at (7,0) {\Bigg)};
\end{tikzpicture}\]
of length $4$.  By condition 4, the inner half whorl $(3 \frown 5;6)$ lies in $\varsigma^2$ since $(2\smile 5; 4) \in \varsigma^2$ conflicts with $(2 \frown 6;5) \in \varsigma^3$.

Let $\mathcal{ST}_k$ denote the set of shell tableaux of length $k$.
\begin{defn}Define the map
\begin{align*}
  \sh: (\mathbb{Z}_{\geq 0}, \mathcal{ST}_k)& \longrightarrow \text{Set of Arcs}  \\
  (a,T) &\longmapsto \bigcup_{r=1}^k \left\{ (i,l) \left| \begin{array}{c} i \neq l \ \text{and}\ \tau((i,l)) \ \text{is maximal}\\ \text{among all labels} \ b\in\varsigma^r \ \text{with}\ b\leq a \end{array} \right\}, \right.
\end{align*}
and $\sh(T) = \sh(|T|, T)$ be the \textit{shape} of a shell tableau $T$.
\end{defn}
\noindent For $T$ in the example above, we have
\[\sh(T) = \begin{tikzpicture}[scale=1.9]
\draw[fill=black] (9.75,0) circle (.04);
\draw[fill=black] (10,0) circle (.04);
\draw[fill=black] (10.25,0) circle (.04);
\draw[fill=black] (10.5,0) circle (.04);
\draw[fill=black] (10.75,0) circle (.04);
\draw[fill=black] (11,0) circle (.04);
\draw[line width=.15mm] (9.75,0) to[in=110, out=70, circle, looseness=1.4] (10.5,0);
\draw[line width=.15mm] (10,0) to[in=110, out=70, circle, looseness=1.7] (10.25,0);
\draw[line width=.15mm] (10.25,0) to[in=110, out=70, circle, looseness=1.4] (10.75,0);
\end{tikzpicture}\quad\]
because $\tau(1,4) = 2$ is the maximal label $\varsigma^1$, $\tau(3, 5) = 6$ is the maximal label in $\varsigma^2$ and $\tau(2, 3)$ is the maximal label in $\varsigma^3$.  For $\lambda \vdash [n]$, let $\mathcal{ST}_k(\lambda)$ denote the set of shell tableaux of shape $\lambda.$

\begin{thm}\label{tableauxbijection} Let $\lambda \in \hat{Z}_k$.  There is a bijection between $\mathcal{P}_k(\lambda)$ and $\mathcal{ST}_k(\lambda)$.
\end{thm}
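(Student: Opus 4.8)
The plan is to construct explicit, mutually inverse maps $\Psi\colon\mathcal{P}_k(\lambda)\to\mathcal{ST}_k(\lambda)$ and $\Phi\colon\mathcal{ST}_k(\lambda)\to\mathcal{P}_k(\lambda)$. The guiding idea is that a path in $\Lambda(n)$ records $k$ rounds, each round being a restriction edge followed by an induction edge, while a shell $\varsigma^r$ in a tableau records the entire life history of the arc born at right endpoint $n$ during the $r$-th induction step: its outer whorl is that arc as first created, together with its first reattachment, and its inner whorls are the further reattachments and conflict-resolutions forced on it by later rounds. By the branching rules (Theorem~\ref{branching rules}, that is, Corollaries~\ref{restriction} and~\ref{induction}), every edge of $\Lambda(n)$ either fixes the set partition (when $n$ is not a right endpoint) or changes it by a shell of some width $n-i$ lying in a shell set $C^{\lambda-\{i\frown n\},\,i\frown n}$; and Lemma~\ref{shellset} resolves such a shell set into three cases---the arc $i\frown n$ is deleted, it is reattached cleanly to a free node $i\frown k$ (adding a half-whorl), or it absorbs a conflicting arc $j\frown k$ and is pushed inward (adding a further inner whorl)---while Propositions~\ref{restrictionarc} and~\ref{tensorarcs} give the mirror statement for conflicts at a shared \emph{left} endpoint that arise on induction. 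These are exactly the moves that create, slide, and nest arcs.

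To define $\Psi$, process $P=(\lambda^0,\lambda^{1/2},\dots,\lambda^k=\lambda)$ round by round, keeping the running partition and a counter supplying the next label. On the restriction edge of round $r$: if it fixes the partition, do nothing; otherwise cut the arc $i\frown n$ of $\lambda^{r-1}$---which, one shows inductively, is the maximally labeled arc of some shell $\varsigma^m$ already under construction---and append its reattachment to $\varsigma^m$ as a new inner half-whorl, recursively appending any forced conflict-resolution as a further inner whorl of the shell of the displaced arc; a deletion is recorded as a degenerate reattachment to the self-loop $i\smile i$. On the induction edge of round $r$: if it fixes the partition, set $\varsigma^r=\{(n\frown n;a)\}$; otherwise open a fresh shell $\varsigma^r=\{(j\frown n;a)\}$ with the newly created arc, again pushing inward the shell of any arc already sharing the left endpoint $j$. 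Labels are handed out strictly in the temporal order in which arcs move. One then checks that each $\varsigma^r$ is a generalized shell of width $n-i_r$ in the sense of Definition~\ref{generalizedshell}; that conditions (1)--(3) of Definition~\ref{tableaux} hold from the round structure (the two smallest labels of $\varsigma^r$ are those of its outer whorl, created in round $r$, hence precede every label in $\varsigma^{r+1}$); that conditions (4)--(5) are the translation of the ``bigger shell'' case of Lemma~\ref{shellset} and its left-endpoint analogue into labels, the ``minimal $b>a$'' and ``$b+1$'' clauses recording that colliding an arc labeled $a$ with an arc labeled $b$ instantly produces the pushed-in arc labeled $b+1$; and that the strictness of the labeling, phrased via $\dim$ in Definition~\ref{tableaux}, holds automatically because an arc of smaller span is always reattached \emph{after} the arc that displaces it. Finally $\sh(\Psi(P))=\lambda^k=\lambda$, since the shape arc of $\varsigma^r$ is the innermost (largest-labeled) arc into which the round-$r$ arc finally settles, and round $k$ is the last round to disturb any node.

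To define $\Phi$, run a tableau $T=(\varsigma^1,\dots,\varsigma^k)$ the other way: read its arcs in increasing label order; by conditions (2)--(3) they fall into $k$ blocks whose outer whorls occur in order, and reconstruct the running partition by opening a new arc at $n$ when an outer arc of the form $j\frown n$ appears, cutting and reattaching it when the matching arc $j\smile l$ appears, and carrying out the slides dictated by the inner whorls---conditions (4)--(5) guarantee that these slides are consistent and uniquely determined. Recording the partitions at the round boundaries gives $\Phi(T)$, and Corollaries~\ref{restriction} and~\ref{induction} certify that consecutive partitions form an edge of the correct type because the relevant symmetric difference lies in the relevant shell set by Lemma~\ref{shellset}. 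That $\Phi\circ\Psi$ and $\Psi\circ\Phi$ are the identities is then immediate, since each map merely translates between the temporal bookkeeping of arc creations and moves and the spatial bookkeeping of shells and partitions; equivalently, one may set up both maps by induction on $k$, truncating a path at level $k-1$ and truncating a tableau by deleting $\varsigma^k$ together with the last inner whorl it forced elsewhere.

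The main obstacle is the precise matching of Definition~\ref{tableaux}(4)--(5) with the recursive structure of shell sets in Lemma~\ref{shellset} (and the left-endpoint version implicit in Proposition~\ref{tensorarcs}): one must verify that the inner whorls demanded by a shell tableau are created in a path exactly when a reattachment collides with an arc sharing an endpoint, that such collisions never leave a choice undetermined (so that $\Psi$ and $\Phi$ are well defined and single-valued), and that the $\dim$-strictness of the labeling is forced rather than an independent constraint. The remainder---that the objects produced are generalized shells, that the labels interleave as in conditions (1)--(3), and that consecutive partitions are genuine Bratteli edges---is bookkeeping licensed directly by the branching rules of Section~\ref{Branching Rules}.
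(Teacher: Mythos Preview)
Your proposal follows essentially the same strategy as the paper: build the tableau incrementally along the path, with labels handed out in temporal order and each shell $\varsigma^r$ recording the life history of the arc introduced at step $r$; then invert by reading labels in increasing order to recover the path. The paper's construction differs only cosmetically---it creates the placeholder $(n,n;\cdot)$ shell at the restriction half-step rather than deferring the new shell to the induction half-step as you do---and it is equally terse about verifying conditions (4)--(5), so your sketch is at the same level of completeness as the published argument.
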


\begin{proof}
Let $\lambda \in \hat{Z}_k$.  Given a path $P = (\lambda^0, \lambda^{\frac{1}{2}}, \lambda^1, \ldots, \lambda^{k-\frac{1}{2}}, \lambda^k)\in \mathcal{P}_k(\lambda)$ we will recursively define a sequence 
\[T_0, T_{\frac{1}{2}}, T_1, \ldots, T_{k-\frac{1}{2}}, T_k\]
 where $T_j$ is a shell tableau of length $j$ and shape $\lambda^j$, and $T_{j+\frac{1}{2}}$ is a shell tableau of length $j+1$ and shape $\lambda^{j+\frac{1}{2}}$.  Let $T_0$ be the empty shell tableau of length $0$.
 
 \begin{enumerate}
 
\item If $\lambda^{j+\frac{1}{2}} = \lambda^j$, we define
\[ T_{j+\frac{1}{2}} = (\varsigma_{j+\frac{1}{2}}^{1}, \varsigma_{j+\frac{1}{2}}^{2}, \ldots, \varsigma_{j+\frac{1}{2}}^{j+1}),\] 
 where
\[ \varsigma_{j+\frac{1}{2} }^{r} =\left\{\begin{array}{ll}
\varsigma_{j}^{r} &  \text{if}\ r< j+1, \\
(n, n; |T_j|+1) & \text{if} \ r = j+1. \end{array}\right.
\]

\item If $\lambda^{j+\frac{1}{2}} \neq \lambda^j$, suppose 
\[ \bigcup_{s=1}^t \{i_s \frown l_s\} \cup \bigcup_{s=1}^{t} \{i_s \smile l_{s+1}\} \ \]
is the shell created by the symmetric difference of $\lambda^{r}$ and $\lambda^{r+\frac{1}{2}}$.  Since $\sh(T_j) = \lambda^j$, for $1 \leq s \leq t$, $(i_s, l_s; a_s)$ is an arc with maximal label $a_s \leq |T_j|$ in a diagram $\varsigma_{j}^{r_s}$ of $T_j$. Let
\[ T_{j +\frac{1}{2}} = (\varsigma_{j+\frac{1}{2}}^{1}, \varsigma_{j+\frac{1}{2}}^{2}, \ldots, \varsigma_{j+\frac{1}{2}}^{j+1}),\]
where
\[ \varsigma_{j+\frac{1}{2}}^{r} =\left\{\begin{array}{ll}
\varsigma_j^r &  \text{if}\ r \neq r_s\ \text{for any}\ s, \\
\varsigma_j^r + (i_s, l_{s+1}; |T_j|+s) & \text{if}\ r = r_s\ \text{for some}\ s ,\\
(n,n; |T_j| + t+1) & \text{if}\ r = j+1.
\end{array}\right.
\]

\item If $\lambda^{j+1} = \lambda^{j+\frac{1}{2}}$, define $T_{j+1} = T_{j+\frac{1}{2}}$.

\item If $\lambda^{j+1} \neq \lambda^{j+\frac{1}{2}}$, suppose 
\[ \bigcup_{s=1}^t \{i_s \frown l_s\} \cup \bigcup_{s=1}^{t-1} \{i_s \smile l_{s+1}\} \ \]
is the shell created by the symmetric difference of $\lambda^{r}$ and $\lambda^{r+\frac{1}{2}}$.  Since $\sh(T_{j+\frac{1}{2}}) = \lambda^{j+\frac{1}{2}} $ then for $1 \leq s \leq t-1$, $(i_s, l_{s+1}; a_s)$ is an arc with maximal label $a_s \leq |T_{j+\frac{1}{2}}|$ in a distinct diagram $\varsigma_{j+\frac{1}{2}}^{r_s}$ in $T_{j+\frac{1}{2}}$.   We define
\[ T_{j+1} = (\varsigma_{j+1}^{1}, \varsigma_{j+1}^{2}, \ldots, \varsigma_{j+1}^{j+1}),\] 
where
\[ \varsigma_{j+1}^{r} =\left\{\begin{array}{ll}
\varsigma_{j+\frac{1}{2}}^{r} &  \text{if}\ r \neq r_s\ \text{for any}\ s, \\
\varsigma_{j+\frac{1}{2}}^{r} + (i_{s+1}, l_{s+1}; |T_{j+\frac{1}{2}}|+s) & \text{if}\ r = r_s\ \text{for some}\ s, \\ 
(i_1, l_1, |T_{j+\frac{1}{2}}|) & \text{if} \ r = j+1. \end{array}\right.
\]

\end{enumerate}

In the above construction, we have $T_{\frac{1}{2}} = (\varsigma_{\frac{1}{2}}^1)$ where $\varsigma_{\frac{1}{2}}^1 = \{ (n\frown n; 1)\}$ is a shell tableau of length 1 and shape $\emptyset$.  If $T_j$ is a shell tableau of length $j$ and shape $\lambda^j$, then $T_{j+\frac{1}{2}}$ has length $j+1$ and
\[ \sh(T_{j+\frac{1}{2}} )=  (\lambda^j \cap \lambda^{j+\frac{1}{2}}) \cup (\lambda^{j+\frac{1}{2}}- \lambda^j) = \lambda^{j+\frac{1}{2}}.\]
It is straightforward to check that $T_{j+\frac{1}{2}}$ satisfies conditions $1$--$4$.  Since $T_j$ is a shell tableau, it suffices to prove condition 5 for the arcs $(i_s, l_{s+1}; |T_j|+s)$.  For $s>1$, consider $(i_s, l_s; a_s) \in \varsigma_{j+\frac{1}{2}}^{r_s}$. Then $(i_{s-1}, l_s; |T_j|+s-1)$ lies in $\varsigma_{j+\frac{1}{2}}^{r_{s-1}}$ where $|T_j|+s-1> a_s$ is minimal, and $(i_s, l_{s+1}; |T_{j}|+s) \in \varsigma_{j+\frac{1}{2}}^{r_s}$.  Thus condition 5 holds, so $T_{j+\frac{1}{2}}$ is in fact a shell tableau.  A similar argument can be used to verify each $T_j$ is a shell tableau of length $j$ and shape $\lambda^j$.

For $\lambda \in \hat{Z}_k$, define
\[\begin{array}{ccc}\varphi: \mathcal{P}_k(\lambda) &\longrightarrow& \mathcal{ST}_k(\lambda) \\ P & \mapsto& T_k. \end{array} \]

The map $\varphi$ is bijective since the construction of the sequence of shell tableaux can be reversed as follows.  Given a shell tableau $T = (\varsigma^1, \varsigma^2, \ldots, \varsigma^k)$ of shape $\lambda$, let $T_k= T$. 
 \begin{enumerate}
 
\item If $\varsigma^j = \{(n \frown n;a)\}$, define $T_{j-\frac{1}{2}} = T_{j}$.

\item If $(i \frown n;a) \in \varsigma^j$ for $i<n$, let $ (i_1, l_1; a_1), (i_2, l_2 ; a_2), \ldots, (i_t, l_t; a_t)$
be the arcs in $T_j$ with $i_s \frown l_s \in \varsigma_j^{r_s}$ and $a_s \geq a$.  We define
\[ T_{j-\frac{1}{2}} = (\varsigma_{j-\frac{1}{2}}^{1}, \varsigma_{j-\frac{1}{2}}^{2}, \ldots, \varsigma_{j-\frac{1}{2}}^{j}),\] 
where
\[ \varsigma_{j-\frac{1}{2}}^{r} =\left\{\begin{array}{ll}
\varsigma_{j}^{r} &  \text{if}\ r \neq r_s\ \text{for any}\ s ,\\
\varsigma_{j}^{r} - (i_{s}, l_{s}; a_s) & \text{if}\ r = r_s\ \text{for some}\ s ,\\ 
(n, n, |T_{j}|-t+1) & \text{if} \ r = j. \end{array}\right.
 \]
 
\item If $\varsigma^j = \{(n \frown n;a)\}$, define $T_{j} = (\varsigma_{j}^{1}, \varsigma_{j}^{2}, \ldots, \varsigma_{j}^{j-1})$ where $\varsigma_{j }^{r} =\varsigma_{j+\frac{1}{2}}^{r}$.

\item If $(i \frown n;a) \in \varsigma^j$ for $i<n$, let $ (i_1, l_1; a_1), (i_2, l_2 ; a_2), \ldots, (i_t, l_t; a_t)$
be the arcs in $T_j$ with $i_s \frown l_s \in \varsigma_j^{r_s}$ and $a_s > a$.  We define
\[ T_{j} = (\varsigma_{j}^{1}, \varsigma_{j}^{2}, \ldots, \varsigma_{j}^{j-1}),\]
where
\[ \varsigma_{j}^{r} =\left\{\begin{array}{ll}
\varsigma_{j+\frac{1}{2}}^r &  \text{if}\ r \neq r_s\ \text{for any}\ s, \\
\varsigma_{j+\frac{1}{2}}^r - (i_s, l_{s}; a_s) & \text{if}\ r = r_s\ \text{for some}\ s .\\
\end{array}\right.
\]
\end{enumerate}

Therefore the inverse of $\varphi$ is
\[\begin{array}{ccc}\varphi^{-1}: \mathcal{ST}_k(\lambda) &\longrightarrow& \mathcal{P}_k(\lambda)\\ T & \mapsto& P=(\sh(T_1), \sh(T_2), \ldots, \sh(T_k)). \end{array} \]

\end{proof}

\begin{example}\label{extableaux}
For the path 
\[
\begin{tikzpicture}
\node at (-1.1,0) {$P = \Bigg(\emptyset, \emptyset,$};
  \draw[fill=black] (0,0) circle (.04);
\draw[fill=black] (.25,0) circle (.04);
\draw[fill=black] (.5,0) circle (.04);
\draw[fill=black] (.75,0) circle (.04);
\draw[fill=black] (1,0) circle (.04);
\draw[fill=black] (1.25,0) circle (.04);
\draw[line width=.15mm] (0,0) to[in=110, out=70, circle, looseness=1.3] (1.25,0);
\node at (1.5,-.1) {,};
  \draw[fill=black] (1.75,0) circle (.04);
\draw[fill=black] (2,0) circle (.04);
\draw[fill=black] (2.25,0) circle (.04);
\draw[fill=black] (2.5,0) circle (.04);
\draw[fill=black] (2.75,0) circle (.04);
\draw[line width=.15mm] (1.75,0) to[in=110, out=70, circle, looseness=1.4] (2.5,0);
\node at (3,-.1) {,};
  \draw[fill=black] (3.25,0) circle (.04);
\draw[fill=black] (3.5,0) circle (.04);
\draw[fill=black] (3.75,0) circle (.04);
\draw[fill=black] (4,0) circle (.04);
\draw[fill=black] (4.25,0) circle (.04);
\draw[fill=black] (4.5,0) circle (.04);
\draw[line width=.15mm] (3.25,0) to[in=110, out=70, circle, looseness=1.4] (4,0);
\draw[line width=.15mm] (3.5,0) to[in=110, out=70, circle, looseness=1.4] (4.5,0);
\node at (4.75,-.1) {,};
  \draw[fill=black] (5,0) circle (.04);
\draw[fill=black] (5.25,0) circle (.04);
\draw[fill=black] (5.5,0) circle (.04);
\draw[fill=black] (5.75,0) circle (.04);
\draw[fill=black] (6,0) circle (.04);
\draw[line width=.15mm] (5,0) to[in=110, out=70, circle, looseness=1.4] (5.75,0);
\draw[line width=.15mm] (5.25,0) to[in=110, out=70, circle, looseness=1.4] (6,0);
\node at (6.25,-.1) {,};
\draw[fill=black] (6.5,0) circle (.04);
\draw[fill=black] (6.75,0) circle (.04);
\draw[fill=black] (7,0) circle (.04);
\draw[fill=black] (7.25,0) circle (.04);
\draw[fill=black] (7.5,0) circle (.04);
\draw[fill=black] (7.75,0) circle (.04);
\draw[line width=.15mm] (6.5,0) to[in=110, out=70, circle, looseness=1.4] (7.25,0);
\draw[line width=.15mm] (6.75,0) to[in=110, out=70, circle, looseness=1.4] (7.75,0);
\draw[line width=.15mm] (7,0) to[in=110, out=70, circle, looseness=1.4] (7.5,0);
\node at (8,-.1) {,};
\draw[fill=black] (8.25,0) circle (.04);
\draw[fill=black] (8.5,0) circle (.04);
\draw[fill=black] (8.75,0) circle (.04);
\draw[fill=black] (9,0) circle (.04);
\draw[fill=black] (9.25,0) circle (.04);
\draw[line width=.15mm] (8.25,0) to[in=110, out=70, circle, looseness=1.4] (9,0);
\draw[line width=.15mm] (8.5,0) to[in=110, out=70, circle, looseness=1.7] (8.75,0);
\draw[line width=.15mm] (8.75,0) to[in=110, out=70, circle, looseness=1.4] (9.25,0);
\node at (9.5,-.1) {,};
\draw[fill=black] (9.75,0) circle (.04);
\draw[fill=black] (10,0) circle (.04);
\draw[fill=black] (10.25,0) circle (.04);
\draw[fill=black] (10.5,0) circle (.04);
\draw[fill=black] (10.75,0) circle (.04);
\draw[fill=black] (11,0) circle (.04);
\draw[line width=.15mm] (9.75,0) to[in=110, out=70, circle, looseness=1.4] (10.5,0);
\draw[line width=.15mm] (10,0) to[in=110, out=70, circle, looseness=1.7] (10.25,0);
\draw[line width=.15mm] (10.25,0) to[in=110, out=70, circle, looseness=1.4] (10.75,0);
\node at (11.25,0) {\Bigg)};
\end{tikzpicture}\]
the sequence of shell tableaux is
\begin{eqnarray*}
&&\ T_0 \ \, =\ (\, )\\
&& \begin{tikzpicture}
\node at (-.75,0) {$T_{\frac{1}{2}\ } = \Bigg($};
  \draw[fill=black] (0,0) circle (.04);
\draw[fill=black] (.25,0) circle (.04);
\draw[fill=black] (.5,0) circle (.04);
\draw[fill=black] (.75,0) circle (.04);
\draw[fill=black] (1,0) circle (.04);
\draw[fill=black] (1.25,0) circle (.04);
\draw[line width=.15mm] (1.25,0) to[in=50, out=130, loop] (1.25,0);
\node at (1.25,.4) {\tiny{1}};
\node at (1.5,0) {$\Bigg)$};
\end{tikzpicture}\\
&& \begin{tikzpicture}
\node at (-.75,0) {$T_{1\ \, } = \Bigg($};
  \draw[fill=black] (0,0) circle (.04);
\draw[fill=black] (.25,0) circle (.04);
\draw[fill=black] (.5,0) circle (.04);
\draw[fill=black] (.75,0) circle (.04);
\draw[fill=black] (1,0) circle (.04);
\draw[fill=black] (1.25,0) circle (.04);
\draw[line width=.15mm] (0,0) to[in=110, out=70, circle, looseness=1.3] (1.25,0);
\node at (.625,.6) {\tiny{1}};
\node at (1.5,0) {$\Bigg)$};
\end{tikzpicture}\\
&& \begin{tikzpicture}
\node at (-.75,0) {$T_{1\frac{1}{2}} = \Bigg($};
  \draw[fill=black] (0,0) circle (.04);
\draw[fill=black] (.25,0) circle (.04);
\draw[fill=black] (.5,0) circle (.04);
\draw[fill=black] (.75,0) circle (.04);
\draw[fill=black] (1,0) circle (.04);
\draw[fill=black] (1.25,0) circle (.04);
\draw[line width=.15mm] (0,0) to[in=110, out=70, circle, looseness=1.3] (1.25,0);
\draw[line width=.15mm] (0,0) to[in=-110, out=-70, circle, looseness=1.4] (.75,0);
\node at (.625,.6) {\tiny{1}};
\node at (.375,-.45) {\tiny{2}};
\node at (1.5,0) {,};
\draw[fill=black] (1.75,0) circle (.04);
\draw[fill=black] (2,0) circle (.04);
\draw[fill=black] (2.25,0) circle (.04);
\draw[fill=black] (2.5,0) circle (.04);
\draw[fill=black] (2.75,0) circle (.04);
\draw[fill=black] (3,0) circle (.04);
\draw[line width=.15mm] (3,0) to[in=50, out=130, loop] (3,0);
\node at (3,.4) {\tiny{3}};
\node at (3.25,0) {$\Bigg)$};
\end{tikzpicture}\\
&& \begin{tikzpicture}
\node at (-.75,0) {$T_{2\ \, } = \Bigg($};
  \draw[fill=black] (0,0) circle (.04);
\draw[fill=black] (.25,0) circle (.04);
\draw[fill=black] (.5,0) circle (.04);
\draw[fill=black] (.75,0) circle (.04);
\draw[fill=black] (1,0) circle (.04);
\draw[fill=black] (1.25,0) circle (.04);
\draw[line width=.15mm] (0,0) to[in=110, out=70, circle, looseness=1.3] (1.25,0);
\draw[line width=.15mm] (0,0) to[in=-110, out=-70, circle, looseness=1.4] (.75,0);
\node at (.625,.6) {\tiny{1}};
\node at (.375,-.45) {\tiny{2}};
\node at (1.5,0) {,};
\draw[fill=black] (1.75,0) circle (.04);
\draw[fill=black] (2,0) circle (.04);
\draw[fill=black] (2.25,0) circle (.04);
\draw[fill=black] (2.5,0) circle (.04);
\draw[fill=black] (2.75,0) circle (.04);
\draw[fill=black] (3,0) circle (.04);
\draw[line width=.15mm] (2,0) to[in=110, out=70, circle, looseness= 1.4] (3,0);
\node at (2.5,.55) {\tiny{3}};
\node at (3.25,0) {$\Bigg)$};
\end{tikzpicture}\\
&& \begin{tikzpicture}
\node at (-.75,0) {$T_{2\frac{1}{2}} = \Bigg($};
  \draw[fill=black] (0,0) circle (.04);
\draw[fill=black] (.25,0) circle (.04);
\draw[fill=black] (.5,0) circle (.04);
\draw[fill=black] (.75,0) circle (.04);
\draw[fill=black] (1,0) circle (.04);
\draw[fill=black] (1.25,0) circle (.04);
\draw[line width=.15mm] (0,0) to[in=110, out=70, circle, looseness=1.3] (1.25,0);
\draw[line width=.15mm] (0,0) to[in=-110, out=-70, circle, looseness=1.4] (.75,0);
\node at (.625,.6) {\tiny{1}};
\node at (.375,-.45) {\tiny{2}};
\node at (1.5,0) {,};
\draw[fill=black] (1.75,0) circle (.04);
\draw[fill=black] (2,0) circle (.04);
\draw[fill=black] (2.25,0) circle (.04);
\draw[fill=black] (2.5,0) circle (.04);
\draw[fill=black] (2.75,0) circle (.04);
\draw[fill=black] (3,0) circle (.04);
\draw[line width=.15mm] (2,0) to[in=110, out=70, circle, looseness= 1.4] (3,0);
\draw[line width=.15mm] (2,0) to[in=-110, out=-70, circle, looseness= 1.4] (2.75,0);
\node at (2.55,.55) {\tiny{3}};
\node at (2.375,-.45) {\tiny{4}};
\node at (3.25,-.1) {,};
\draw[fill=black] (3.5,0) circle (.04);
\draw[fill=black] (3.75,0) circle (.04);
\draw[fill=black] (4,0) circle (.04);
\draw[fill=black] (4.25,0) circle (.04);
\draw[fill=black] (4.5,0) circle (.04);
\draw[fill=black] (4.75,0) circle (.04);
\draw[line width= .15mm] (4.75,0) to[in=50, out=130, loop] (4.75);
\node at (4.75, .4) {\tiny{5}};
\node at (5,0) {$\Bigg)$};
\end{tikzpicture}\\
&& \begin{tikzpicture}
\node at (-.75,0) {$T_{3 \ \, } = \Bigg($};
  \draw[fill=black] (0,0) circle (.04);
\draw[fill=black] (.25,0) circle (.04);
\draw[fill=black] (.5,0) circle (.04);
\draw[fill=black] (.75,0) circle (.04);
\draw[fill=black] (1,0) circle (.04);
\draw[fill=black] (1.25,0) circle (.04);
\draw[line width=.15mm] (0,0) to[in=110, out=70, circle, looseness=1.3] (1.25,0);
\draw[line width=.15mm] (0,0) to[in=-110, out=-70, circle, looseness=1.4] (.75,0);
\node at (.625,.6) {\tiny{1}};
\node at (.375,-.45) {\tiny{2}};
\node at (1.5,0) {,};
\draw[fill=black] (1.75,0) circle (.04);
\draw[fill=black] (2,0) circle (.04);
\draw[fill=black] (2.25,0) circle (.04);
\draw[fill=black] (2.5,0) circle (.04);
\draw[fill=black] (2.75,0) circle (.04);
\draw[fill=black] (3,0) circle (.04);
\draw[line width=.15mm] (2,0) to[in=110, out=70, circle, looseness= 1.4] (3,0);
\draw[line width=.15mm] (2,0) to[in=-110, out=-70, circle, looseness= 1.4] (2.75,0);
\draw[line width=.15mm] (2.25,0) to[in=110, out=70, circle, looseness= 1.4] (2.75,0);
\node at (2.55,.55) {\tiny{3}};
\node at (2.375,-.45) {\tiny{4}};
\node at (2.7,.23) {\tiny{6}};
\node at (3.25,-.1) {,};
\draw[fill=black] (3.5,0) circle (.04);
\draw[fill=black] (3.75,0) circle (.04);
\draw[fill=black] (4,0) circle (.04);
\draw[fill=black] (4.25,0) circle (.04);
\draw[fill=black] (4.5,0) circle (.04);
\draw[fill=black] (4.75,0) circle (.04);
\draw[line width= .15mm] (3.75,0) to[in=110, out=70, circle, looseness= 1.4] (4.75,0);
\node at (4.25, .55) {\tiny{5}};
\node at (5,0) {$\Bigg)$};
\end{tikzpicture}\\
&& \begin{tikzpicture}
\node at (-.75,0) {$T_{3\frac{1}{2}} = \Bigg($};
  \draw[fill=black] (0,0) circle (.04);
\draw[fill=black] (.25,0) circle (.04);
\draw[fill=black] (.5,0) circle (.04);
\draw[fill=black] (.75,0) circle (.04);
\draw[fill=black] (1,0) circle (.04);
\draw[fill=black] (1.25,0) circle (.04);
\draw[line width=.15mm] (0,0) to[in=110, out=70, circle, looseness=1.3] (1.25,0);
\draw[line width=.15mm] (0,0) to[in=-110, out=-70, circle, looseness=1.4] (.75,0);
\node at (.625,.6) {\tiny{1}};
\node at (.375,-.45) {\tiny{2}};
\node at (1.5,0) {,};
\draw[fill=black] (1.75,0) circle (.04);
\draw[fill=black] (2,0) circle (.04);
\draw[fill=black] (2.25,0) circle (.04);
\draw[fill=black] (2.5,0) circle (.04);
\draw[fill=black] (2.75,0) circle (.04);
\draw[fill=black] (3,0) circle (.04);
\draw[line width=.15mm] (2,0) to[in=110, out=70, circle, looseness= 1.4] (3,0);
\draw[line width=.15mm] (2,0) to[in=-110, out=-70, circle, looseness= 1.4] (2.75,0);
\draw[line width=.15mm] (2.25,0) to[in=110, out=70, circle, looseness= 1.4] (2.75,0);
\node at (2.55,.55) {\tiny{3}};
\node at (2.375,-.45) {\tiny{4}};
\node at (2.7,.23) {\tiny{6}};
\node at (3.25,-.1) {,};
\draw[fill=black] (3.5,0) circle (.04);
\draw[fill=black] (3.75,0) circle (.04);
\draw[fill=black] (4,0) circle (.04);
\draw[fill=black] (4.25,0) circle (.04);
\draw[fill=black] (4.5,0) circle (.04);
\draw[fill=black] (4.75,0) circle (.04);
\draw[line width= .15mm] (3.75,0) to[in=110, out=70, circle, looseness= 1.4] (4.75,0);
\draw[line width= .15mm] (3.75,0) to[in=-110, out=-70, circle, looseness= 1.7] (4,0);
\node at (4.25, .55) {\tiny{5}};
\node at (3.875, -.25) {\tiny{7}};
\node at (5,-.1) {,};
\draw[fill=black] (5.25,0) circle (.04);
\draw[fill=black] (5.5,0) circle (.04);
\draw[fill=black] (5.75,0) circle (.04);
\draw[fill=black] (6,0) circle (.04);
\draw[fill=black] (6.25,0) circle (.04);
\draw[fill=black] (6.5,0) circle (.04);
\draw[line width=.15mm] (6.5,0) to[in=50, out=130, loop] (6.5,0);
\node at (6.5, .4) {\tiny{8}};
\node at (6.75,0) {$\Bigg)$};
\end{tikzpicture}\\
&& \begin{tikzpicture}
\node at (-.75,0) {$T_{4 \ \ } = \Bigg($};
  \draw[fill=black] (0,0) circle (.04);
\draw[fill=black] (.25,0) circle (.04);
\draw[fill=black] (.5,0) circle (.04);
\draw[fill=black] (.75,0) circle (.04);
\draw[fill=black] (1,0) circle (.04);
\draw[fill=black] (1.25,0) circle (.04);
\draw[line width=.15mm] (0,0) to[in=110, out=70, circle, looseness=1.3] (1.25,0);
\draw[line width=.15mm] (0,0) to[in=-110, out=-70, circle, looseness=1.4] (.75,0);
\node at (.625,.6) {\tiny{1}};
\node at (.375,-.45) {\tiny{2}};
\node at (1.5,0) {,};
\draw[fill=black] (1.75,0) circle (.04);
\draw[fill=black] (2,0) circle (.04);
\draw[fill=black] (2.25,0) circle (.04);
\draw[fill=black] (2.5,0) circle (.04);
\draw[fill=black] (2.75,0) circle (.04);
\draw[fill=black] (3,0) circle (.04);
\draw[line width=.15mm] (2,0) to[in=110, out=70, circle, looseness= 1.4] (3,0);
\draw[line width=.15mm] (2,0) to[in=-110, out=-70, circle, looseness= 1.4] (2.75,0);
\draw[line width=.15mm] (2.25,0) to[in=110, out=70, circle, looseness= 1.4] (2.75,0);
\node at (2.55,.55) {\tiny{3}};
\node at (2.375,-.45) {\tiny{4}};
\node at (2.7,.23) {\tiny{6}};
\node at (3.25,-.1) {,};
\draw[fill=black] (3.5,0) circle (.04);
\draw[fill=black] (3.75,0) circle (.04);
\draw[fill=black] (4,0) circle (.04);
\draw[fill=black] (4.25,0) circle (.04);
\draw[fill=black] (4.5,0) circle (.04);
\draw[fill=black] (4.75,0) circle (.04);
\draw[line width= .15mm] (3.75,0) to[in=110, out=70, circle, looseness= 1.4] (4.75,0);
\draw[line width= .15mm] (3.75,0) to[in=-110, out=-70, circle, looseness= 1.7] (4,0);
\node at (4.25, .55) {\tiny{5}};
\node at (3.875, -.25) {\tiny{7}};
\node at (5,-.1) {,};
\draw[fill=black] (5.25,0) circle (.04);
\draw[fill=black] (5.5,0) circle (.04);
\draw[fill=black] (5.75,0) circle (.04);
\draw[fill=black] (6,0) circle (.04);
\draw[fill=black] (6.25,0) circle (.04);
\draw[fill=black] (6.5,0) circle (.04);
\draw[line width=.15mm] (6.5,0) to[in=50, out=130, loop] (6.5,0);
\node at (6.5, .4) {\tiny{8}};
\node at (6.75,0) {$\Bigg).$};
\end{tikzpicture}
 \end{eqnarray*}

\end{example}
\noindent Note that each shell $\varsigma^r$ keeps track of the arc introduced at the $r$th row of the Bratteli diagram from inducing $\Res_{U_{n-1}}^{U_n}(V^{\otimes r-1})$.

When $q=2$, then $q-1 =1$ so that many of the edges in the Bratteli diagram have weight $1$.  In this case, we can account for the weights of paths in the Bratteli diagram by removing the second condition in the definition of a strict labeling.  A \textit{semi-strict shell tableau} is a shell tableau where we allow $\tau(j,m) = \tau(i,l)+1$ for every pair of arcs $(i,l; \tau(i,l))$ and $(j,m; \tau(j,m))$ in a labeled shell with $\dim(i,l) > \dim(j,m)$ and $i=j$ or $k=l$.  This is reminiscent of semi-standard Young tableaux where we allow the entries along the rows to be weakly increasing.

Suppose $\mathcal{SST}_k(\lambda)$ is the set of semi-strict shell tableaux of length $k$ and shape $\lambda$. Recall the sum of the weights of paths to $\lambda$ is the multiplicity of $\chi^\lambda$ in $V^k$. When $q=2$, this is the number of semi-strict shell tableaux.

 \begin{proposition}
Let $q=2$ and $\lambda \in \hat{Z}_k$.  Then 
\[ \sum_{P \in \mathcal{P}_k(\lambda)} w(P) = |\mathcal{SST}_k(\lambda)| . \]
 \end{proposition}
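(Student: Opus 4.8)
The plan is to reduce to an identity of multiplicities and then refine the bijection of Theorem~\ref{tableauxbijection}. By the remarks preceding the proposition, $\sum_{P\in\mathcal{P}_k(\lambda)}\wt(P)$ equals the multiplicity of $\chi^\lambda$ in $V^k$, so it suffices to show that when $q=2$ this multiplicity equals $|\mathcal{SST}_k(\lambda)|$; equivalently, to produce a map $\mathcal{SST}_k(\lambda)\to\mathcal{P}_k(\lambda)$ whose fiber over $P$ has exactly $\wt(P)$ elements. Since every strictly labeled shell tableau is semi-strict, $\mathcal{ST}_k(\lambda)\subseteq\mathcal{SST}_k(\lambda)$, and the bijection $\varphi$ of Theorem~\ref{tableauxbijection} already accounts for the ``$1$'' in each $\wt(P)$; the new content is that the remaining semi-strict tableaux supply the missing $\wt(P)-1$, path by path.

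Here is the mechanism we expect. When $q=2$ we have $t=q-1=1$, so every edge label of the Bratteli diagram is a nonnegative power of $2$. By Lemma~\ref{coefficient} a shell coefficient factors as a product over its whorls, and by the computation in the proof of Theorem~\ref{branchingrules} the outer whorl $(i\frown l,\ i\smile k)$ of a shell built over an ambient partition $\nu$ contributes
\[ c^{\nu,\,i\frown l}_{\nu\cup\{i\frown k\}}=t\,q^{\,\crs(\nu,\,i\frown l)-\crs(\nu,\,i\frown k)}, \]
which at $q=2$ is $2^{\,\crs(\nu,\,i\frown l)-\crs(\nu,\,i\frown k)}$, an honest power of $2$ whose exponent counts a specific family of arcs of $\nu$ crossing the reattached arc (as extracted in that proof via Lemma~\ref{crossing}); each inner whorl contributes a further power of $2$ of the same shape. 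The relaxation defining $\mathcal{SST}_k(\lambda)$ --- allowing $\tau(j,m)=\tau(i,l)+1$ for whorl-conjugate arcs $(i,l),(j,m)$ in one shell --- is exactly the room created by $t=1$. The plan is to run the recursion of Theorem~\ref{tableauxbijection} and, at the step producing a given whorl, record one binary choice for each crossing arc counted by the corresponding exponent, encoding the choice in the output tableau as whether a designated pair of whorl-conjugate labels (in the shell housing that crossing arc) is placed consecutively --- legal only in $\mathcal{SST}_k(\lambda)$ --- or separated by a gap, as forced in $\mathcal{ST}_k(\lambda)$. Threaded through all $k$ rows, the number of choices for $P$ is the product over its whorls of these powers of $2$, namely $\wt(P)$, giving the desired fibers.

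Making this precise is the bulk of the work, and the main obstacle is the order-dependence of the construction: which shell a reattached arc enters, hence which pairs of whorl-conjugate labels are available to be collapsed at a given step, is determined by the earlier steps, so the binary choices cannot be made globally but must be carried along the recursion. Conditions~1--3 of Definition~\ref{tableaux} are untouched by the relaxation, and conditions~4--5 (the whorl structure is forced by cross-shell conflicts) still hold since we have changed only label gaps and not the underlying arc sets; the delicate verification is that collapsing a gap is consistent with conditions~2 and~3 for precisely the arcs counted by the exponent --- neither more nor fewer. To organize this cleanly, and to avoid re-deriving the combinatorics of Theorem~\ref{tableauxbijection}, I would induct on $k$: from $V^k\cong\Ind_{U_{n-1}}^{U_n}\Res_{U_{n-1}}^{U_n}(V^{k-1})$, the branching rules of Theorem~\ref{branching rules}, and the inductive hypothesis for $V^{k-1}$, the claim reduces to
\[ \sum_{\nu\,\vdash[n]}|\mathcal{SST}_{k-1}(\nu)|\sum_{\mu\,\vdash[n-1]}c^\nu_\mu\,d^\lambda_\mu=|\mathcal{SST}_k(\lambda)|\qquad(q=2), \]
which is then proved by peeling the last shell $\varsigma^k=\{(i_k\frown n;a)\}$ off a length-$k$ semi-strict tableau of shape $\lambda$ and matching the ways of doing so --- ranging over $\nu$, $\mu$, and the collapsed gaps in the two-step restrict-then-induce process --- against the powers of $2$ appearing in $\sum_\mu c^\nu_\mu d^\lambda_\mu$.
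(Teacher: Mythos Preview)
Your overall strategy matches the paper's: build a surjection $\mathcal{SST}_k(\lambda)\to\mathcal{P}_k(\lambda)$ whose fiber over each path $P$ has size $\wt(P)$, and note that at $q=2$ each edge label is a power of $2$. Where you diverge is in the mechanism for producing the extra semi-strict tableaux in a fiber, and here there is a genuine gap.

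You propose to encode each binary choice by toggling whether a pair of existing labels in some shell is consecutive or separated by a gap. But for a fixed underlying arc set, the labels are already pinned down by the bijection $\varphi$ of Theorem~\ref{tableauxbijection}; there is no residual freedom in how to place the labels $\{1,\dots,\sum_r|\varsigma^r|\}$ once the path is fixed. The semi-strict relaxation does not create new labelings of the same arc configuration --- it creates room for \emph{new arcs}. This is precisely what generalized shells (Definition~\ref{generalizedshell}) are for: they allow the sets $I_r,L_r$ to have size greater than one. The paper's construction exploits this: for each edge $(\lambda^r,r)\to(\lambda^{r+\frac12},r+\frac12)$ with associated shell having whorls on $(i_s\frown l_s,\,i_s\smile l_{s+1})$, one identifies a set $Y_s$ of nodes $m$ such that some arc $j\frown m\in\lambda^r\cap\lambda^{r+\frac12}$ crosses $i_s\frown l_s$ but not $i_s\smile l_{s+1}$; then $\sum_s|Y_s|$ is exactly the exponent of $2$ in the edge label. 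For each subset $X_s\subseteq Y_s$ one \emph{inserts} the extra arcs $(i_s,m)$ for $m\in X_s$ into the shell $\varsigma^{r_s}$, then relabels uniquely so that the inserted arc receives a label immediately following that of $(i_s,l_{s+1})$ --- which violates strictness (same left endpoint, consecutive labels) but is permitted in the semi-strict setting. The $2^{\sum_s|Y_s|}$ choices of $(X_1,\dots,X_t)$ over all edges give the fiber of size $\wt(P)$.

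So the missing idea is that the binary choices live in the arc sets of the generalized shells, not in the label ordering alone. Your inductive reformulation on $k$ is a reasonable organizational device and would reduce to essentially the same edge-by-edge analysis once you identify the correct carrier for the binary data; the paper instead works directly along the whole path without an explicit induction.
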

 
 \begin{proof}
 Let $q=2$ and $\lambda \in \hat{Z}_k$.  Let $T= (\varsigma^1, \ldots, \varsigma^k) \in \mathcal{ST}_k(\lambda)$ be the shell tableau corresponding to the path $P = (\lambda^0, \lambda^{\frac{1}{2}}, \ldots, \lambda^{k-\frac{1}{2}}, \lambda^k)$ via the bijection in the proof of Theorem \ref{tableauxbijection}.  Since the weight of $P$ is the product of its edge labels, it suffices to consider the label of a single edge.  Recall the label of an edge $(\lambda^r,r) \to (\lambda^{r+\frac{1}{2}}, r + \frac{1}{2})$ in $P$ is 
\[m((\lambda^r,r) \to (\lambda^{r+\frac{1}{2}}, r+ \tfrac{1}{2})) = \frac{2^{\crs(\lambda^{r} \cap \lambda^{r+\frac{1}{2}}, \lambda^{r}-\lambda^{r+\frac{1}{2}})}}{2^{\crs(\lambda^{r} \cap \lambda^{r+\frac{1}{2}}, \lambda^{r+\frac{1}{2}}-\lambda^{r})}}.\]

Suppose 
\[ \bigcup_{s=1}^t \{i_s \frown l_s\} \cup \bigcup_{s=1}^{t} \{i_s \smile l_{s+1}\} \ \]
is the shell created by the symmetric difference of $\lambda^{r}$ and $\lambda^{r+\frac{1}{2}}$ where $(i_s, l_{s+1}; a_s) \in \varsigma^{r_s}$.  For $1 \leq s \leq t$, define the set 
\[
Y_s = \left\{m\ \Bigg| \begin{array}{c} (j \frown m, i_s \frown l_s) \in \Crs(\lambda^r \cap \lambda^{r+\frac{1}{2}}, \lambda^r- \lambda ^{r+\frac{1}{2}}),\\
 (j \smile m, i_s \smile l_{s+1}) \notin \Crs(\lambda^r \cap \lambda^{r+\frac{1}{2}}, \lambda^{r+\frac{1}{2}}- \lambda^r)
 \end{array} \right\} .
\]
Note that
\[\sum_{s=1}^t |Y_s| = \crs(\lambda^{r} \cap \lambda^{r+\frac{1}{2}}, \lambda^{r}-\lambda^{r+\frac{1}{2}})-\crs(\lambda^{r} \cap \lambda^{r+\frac{1}{2}}, \lambda^{r+\frac{1}{2}}-\lambda^{r}).  \]

For each subset $X_s \subseteq Y_s$, add the arcs
$(i_s, l; b_{l}) \in \varsigma^{r_s}$ for $l \in X_s $.  There is a unique relabeling of the arcs with a distinct label in $\{1, 2, \ldots, \sum_{r=1}^k|\varsigma^r|+ \sum_{s=1}^t |X_s|\}$ so that the order of the labels of the original arcs in $T$ is preserved, and every pair of arcs $(i, l; \tau(i,l))$ and $(j, m; \tau(j,m))$ in a labeled shell with $\dim(i,l) > \dim(j,m)$ satisfies $\tau(i,l) < \tau(j,m)$.  Then each $(X_1, \ldots, X_t)$ determines one of the $2^{\sum_{s=1}^t |Y_s|}$ semi-strict shell tableaux.

 \end{proof}
 
 \begin{example}
 Consider the path $P$ from Example \ref{extableaux},
 \[
\begin{tikzpicture}
\node at (-1.1,0) {$P = \Bigg(\emptyset, \emptyset,$};
  \draw[fill=black] (0,0) circle (.04);
\draw[fill=black] (.25,0) circle (.04);
\draw[fill=black] (.5,0) circle (.04);
\draw[fill=black] (.75,0) circle (.04);
\draw[fill=black] (1,0) circle (.04);
\draw[fill=black] (1.25,0) circle (.04);
\draw[line width=.15mm] (0,0) to[in=110, out=70, circle, looseness=1.3] (1.25,0);
\node at (1.5,-.1) {,};
  \draw[fill=black] (1.75,0) circle (.04);
\draw[fill=black] (2,0) circle (.04);
\draw[fill=black] (2.25,0) circle (.04);
\draw[fill=black] (2.5,0) circle (.04);
\draw[fill=black] (2.75,0) circle (.04);
\draw[line width=.15mm] (1.75,0) to[in=110, out=70, circle, looseness=1.4] (2.5,0);
\node at (3,-.1) {,};
  \draw[fill=black] (3.25,0) circle (.04);
\draw[fill=black] (3.5,0) circle (.04);
\draw[fill=black] (3.75,0) circle (.04);
\draw[fill=black] (4,0) circle (.04);
\draw[fill=black] (4.25,0) circle (.04);
\draw[fill=black] (4.5,0) circle (.04);
\draw[line width=.15mm] (3.25,0) to[in=110, out=70, circle, looseness=1.4] (4,0);
\draw[line width=.15mm] (3.5,0) to[in=110, out=70, circle, looseness=1.4] (4.5,0);
\node at (4.75,-.1) {,};
  \draw[fill=black] (5,0) circle (.04);
\draw[fill=black] (5.25,0) circle (.04);
\draw[fill=black] (5.5,0) circle (.04);
\draw[fill=black] (5.75,0) circle (.04);
\draw[fill=black] (6,0) circle (.04);
\draw[line width=.15mm] (5,0) to[in=110, out=70, circle, looseness=1.4] (5.75,0);
\draw[line width=.15mm] (5.25,0) to[in=110, out=70, circle, looseness=1.4] (6,0);
\node at (6.25,-.1) {,};
\draw[fill=black] (6.5,0) circle (.04);
\draw[fill=black] (6.75,0) circle (.04);
\draw[fill=black] (7,0) circle (.04);
\draw[fill=black] (7.25,0) circle (.04);
\draw[fill=black] (7.5,0) circle (.04);
\draw[fill=black] (7.75,0) circle (.04);
\draw[line width=.15mm] (6.5,0) to[in=110, out=70, circle, looseness=1.4] (7.25,0);
\draw[line width=.15mm] (6.75,0) to[in=110, out=70, circle, looseness=1.4] (7.75,0);
\draw[line width=.15mm] (7,0) to[in=110, out=70, circle, looseness=1.4] (7.5,0);
\node at (8,-.1) {,};
\draw[fill=black] (8.25,0) circle (.04);
\draw[fill=black] (8.5,0) circle (.04);
\draw[fill=black] (8.75,0) circle (.04);
\draw[fill=black] (9,0) circle (.04);
\draw[fill=black] (9.25,0) circle (.04);
\draw[line width=.15mm] (8.25,0) to[in=110, out=70, circle, looseness=1.4] (9,0);
\draw[line width=.15mm] (8.5,0) to[in=110, out=70, circle, looseness=1.7] (8.75,0);
\draw[line width=.15mm] (8.75,0) to[in=110, out=70, circle, looseness=1.4] (9.25,0);
\node at (9.5,-.1) {,};
\draw[fill=black] (9.75,0) circle (.04);
\draw[fill=black] (10,0) circle (.04);
\draw[fill=black] (10.25,0) circle (.04);
\draw[fill=black] (10.5,0) circle (.04);
\draw[fill=black] (10.75,0) circle (.04);
\draw[fill=black] (11,0) circle (.04);
\draw[line width=.15mm] (9.75,0) to[in=110, out=70, circle, looseness=1.4] (10.5,0);
\draw[line width=.15mm] (10,0) to[in=110, out=70, circle, looseness=1.7] (10.25,0);
\draw[line width=.15mm] (10.25,0) to[in=110, out=70, circle, looseness=1.4] (10.75,0);
\node at (11.25,0) {\Bigg),};
\end{tikzpicture}\]
and corresponding tableaux
\[
\begin{tikzpicture}
\node at (-.75,0) {$T = \Bigg($};
  \draw[fill=black] (0,0) circle (.04);
\draw[fill=black] (.25,0) circle (.04);
\draw[fill=black] (.5,0) circle (.04);
\draw[fill=black] (.75,0) circle (.04);
\draw[fill=black] (1,0) circle (.04);
\draw[fill=black] (1.25,0) circle (.04);
\draw[line width=.15mm] (0,0) to[in=110, out=70, circle, looseness=1.3] (1.25,0);
\draw[line width=.15mm] (0,0) to[in=-110, out=-70, circle, looseness=1.4] (.75,0);
\node at (.625,.6) {\tiny{1}};
\node at (.375,-.45) {\tiny{2}};
\node at (1.5,0) {,};
\draw[fill=black] (1.75,0) circle (.04);
\draw[fill=black] (2,0) circle (.04);
\draw[fill=black] (2.25,0) circle (.04);
\draw[fill=black] (2.5,0) circle (.04);
\draw[fill=black] (2.75,0) circle (.04);
\draw[fill=black] (3,0) circle (.04);
\draw[line width=.15mm] (2,0) to[in=110, out=70, circle, looseness= 1.4] (3,0);
\draw[line width=.15mm] (2,0) to[in=-110, out=-70, circle, looseness= 1.4] (2.75,0);
\draw[line width=.15mm] (2.25,0) to[in=110, out=70, circle, looseness= 1.4] (2.75,0);
\node at (2.55,.55) {\tiny{3}};
\node at (2.375,-.45) {\tiny{4}};
\node at (2.7,.23) {\tiny{6}};
\node at (3.25,-.1) {,};
\draw[fill=black] (3.5,0) circle (.04);
\draw[fill=black] (3.75,0) circle (.04);
\draw[fill=black] (4,0) circle (.04);
\draw[fill=black] (4.25,0) circle (.04);
\draw[fill=black] (4.5,0) circle (.04);
\draw[fill=black] (4.75,0) circle (.04);
\draw[line width= .15mm] (3.75,0) to[in=110, out=70, circle, looseness= 1.4] (4.75,0);
\draw[line width= .15mm] (3.75,0) to[in=-110, out=-70, circle, looseness= 1.7] (4,0);
\node at (4.25, .55) {\tiny{5}};
\node at (3.875, -.25) {\tiny{7}};
\node at (5,-.1) {,};
\draw[fill=black] (5.25,0) circle (.04);
\draw[fill=black] (5.5,0) circle (.04);
\draw[fill=black] (5.75,0) circle (.04);
\draw[fill=black] (6,0) circle (.04);
\draw[fill=black] (6.25,0) circle (.04);
\draw[fill=black] (6.5,0) circle (.04);
\draw[line width=.15mm] (6.5,0) to[in=50, out=130, loop] (6.5,0);
\node at (6.5, .4) {\tiny{8}};
\node at (6.75,0) {$\Bigg).$};
\end{tikzpicture}\]
The path $P$ has weight $2$ since $m((\lambda^3,3) \to (\lambda^{3\frac{1}{2}}, 3\frac{1}{2})) = 2$.  The shell created by the symmetric difference between $\lambda^3$ and $\lambda^{3 \frac{1}{2}}$ is
\[\begin{tikzpicture}[scale=1.85]
\draw[fill=black] (6.5,0) circle (.04);
\draw[fill=black] (6.75,0) circle (.04);
\draw[fill=black] (7,0) circle (.04);
\draw[fill=black] (7.25,0) circle (.04);
\draw[fill=black] (7.5,0) circle (.04);
\draw[fill=black] (7.75,0) circle (.04);
\draw[line width=.15mm] (6.75,0) to[in=-110, out=-70, circle, looseness=1.8] (7,0);
\draw[line width=.15mm] (6.75,0) to[in=110, out=70, circle, looseness=1.4] (7.75,0);
\end{tikzpicture}
\]
and the set $Y_1 = \{ 4\}$ as $(1\frown 4, 2 \frown 6) \in \Crs(\lambda^3 \cap \lambda^{3\frac{1}{2}}, \lambda^3 - \lambda^{3\frac{1}{2}})$, but $(1\smile 4, 2 \smile 3) \notin \Crs(\lambda^3 \cap \lambda^{3\frac{1}{2}}, \lambda^{3\frac{1}{2}} - \lambda^{3}).$ The two semi-strict tableaux corresponding to $\emptyset$ and $Y_1$ are
\[
\begin{tikzpicture}
\node at (-.75,0) {$T = \Bigg($};
  \draw[fill=black] (0,0) circle (.04);
\draw[fill=black] (.25,0) circle (.04);
\draw[fill=black] (.5,0) circle (.04);
\draw[fill=black] (.75,0) circle (.04);
\draw[fill=black] (1,0) circle (.04);
\draw[fill=black] (1.25,0) circle (.04);
\draw[line width=.15mm] (0,0) to[in=110, out=70, circle, looseness=1.3] (1.25,0);
\draw[line width=.15mm] (0,0) to[in=-110, out=-70, circle, looseness=1.4] (.75,0);
\node at (.625,.6) {\tiny{1}};
\node at (.375,-.45) {\tiny{2}};
\node at (1.5,0) {,};
\draw[fill=black] (1.75,0) circle (.04);
\draw[fill=black] (2,0) circle (.04);
\draw[fill=black] (2.25,0) circle (.04);
\draw[fill=black] (2.5,0) circle (.04);
\draw[fill=black] (2.75,0) circle (.04);
\draw[fill=black] (3,0) circle (.04);
\draw[line width=.15mm] (2,0) to[in=110, out=70, circle, looseness= 1.4] (3,0);
\draw[line width=.15mm] (2,0) to[in=-110, out=-70, circle, looseness= 1.4] (2.75,0);
\draw[line width=.15mm] (2.25,0) to[in=110, out=70, circle, looseness= 1.4] (2.75,0);
\node at (2.55,.55) {\tiny{3}};
\node at (2.375,-.45) {\tiny{4}};
\node at (2.7,.23) {\tiny{6}};
\node at (3.25,-.1) {,};
\draw[fill=black] (3.5,0) circle (.04);
\draw[fill=black] (3.75,0) circle (.04);
\draw[fill=black] (4,0) circle (.04);
\draw[fill=black] (4.25,0) circle (.04);
\draw[fill=black] (4.5,0) circle (.04);
\draw[fill=black] (4.75,0) circle (.04);
\draw[line width= .15mm] (3.75,0) to[in=110, out=70, circle, looseness= 1.4] (4.75,0);
\draw[line width= .15mm] (3.75,0) to[in=-110, out=-70, circle, looseness= 1.7] (4,0);
\node at (4.25, .55) {\tiny{5}};
\node at (3.875, -.25) {\tiny{7}};
\node at (5,-.1) {,};
\draw[fill=black] (5.25,0) circle (.04);
\draw[fill=black] (5.5,0) circle (.04);
\draw[fill=black] (5.75,0) circle (.04);
\draw[fill=black] (6,0) circle (.04);
\draw[fill=black] (6.25,0) circle (.04);
\draw[fill=black] (6.5,0) circle (.04);
\draw[line width=.15mm] (6.5,0) to[in=50, out=130, loop] (6.5,0);
\node at (6.5, .4) {\tiny{8}};
\node at (6.75,0) {$\Bigg)$};
\end{tikzpicture}\]
and\[
\begin{tikzpicture}
\node at (-.75,0) {$\tilde{T} = \Bigg($};
  \draw[fill=black] (0,0) circle (.04);
\draw[fill=black] (.25,0) circle (.04);
\draw[fill=black] (.5,0) circle (.04);
\draw[fill=black] (.75,0) circle (.04);
\draw[fill=black] (1,0) circle (.04);
\draw[fill=black] (1.25,0) circle (.04);
\draw[line width=.15mm] (0,0) to[in=110, out=70, circle, looseness=1.3] (1.25,0);
\draw[line width=.15mm] (0,0) to[in=-110, out=-70, circle, looseness=1.4] (.75,0);
\node at (.625,.6) {\tiny{1}};
\node at (.375,-.45) {\tiny{2}};
\node at (1.5,0) {,};
\draw[fill=black] (1.75,0) circle (.04);
\draw[fill=black] (2,0) circle (.04);
\draw[fill=black] (2.25,0) circle (.04);
\draw[fill=black] (2.5,0) circle (.04);
\draw[fill=black] (2.75,0) circle (.04);
\draw[fill=black] (3,0) circle (.04);
\draw[line width=.15mm] (2,0) to[in=110, out=70, circle, looseness= 1.4] (3,0);
\draw[line width=.15mm] (2,0) to[in=-110, out=-70, circle, looseness= 1.4] (2.75,0);
\draw[line width=.15mm] (2.25,0) to[in=110, out=70, circle, looseness= 1.4] (2.75,0);
\node at (2.55,.55) {\tiny{3}};
\node at (2.375,-.45) {\tiny{4}};
\node at (2.7,.23) {\tiny{6}};
\node at (3.25,-.1) {,};
\draw[fill=black] (3.5,0) circle (.04);
\draw[fill=black] (3.75,0) circle (.04);
\draw[fill=black] (4,0) circle (.04);
\draw[fill=black] (4.25,0) circle (.04);
\draw[fill=black] (4.5,0) circle (.04);
\draw[fill=black] (4.75,0) circle (.04);
\draw[line width= .15mm] (3.75,0) to[in=110, out=70, circle, looseness= 1.4] (4.75,0);
\draw[line width= .15mm] (3.75,0) to[in=-110, out=-70, circle, looseness= 1.7] (4,0);
\draw[line width= .15mm] (3.75,0) to[in=-110, out=-70, circle, looseness= 1.5] (4.25,0);
\node at (4.25, .55) {\tiny{5}};
\node at (4.05, -.1) {\tiny{8}};
\node at (4.25, -.25) {\tiny{7}};
\node at (5,-.1) {,};
\draw[fill=black] (5.25,0) circle (.04);
\draw[fill=black] (5.5,0) circle (.04);
\draw[fill=black] (5.75,0) circle (.04);
\draw[fill=black] (6,0) circle (.04);
\draw[fill=black] (6.25,0) circle (.04);
\draw[fill=black] (6.5,0) circle (.04);
\draw[line width=.15mm] (6.5,0) to[in=50, out=130, loop] (6.5,0);
\node at (6.5, .4) {\tiny{9}};
\node at (6.75,0) {$\Bigg)$};
\end{tikzpicture}\]
 \end{example}
\noindent respectively.

%
%

\end{document}